\numberwithin{equation}{section}
\newcommand{\figcaption}[1]{\def\@captype{figure}\caption{#1}}
\newcommand{\tblcaption}[1]{\def\@captype{table}\caption{#1}}
\newcommand{\mG}{\mathbf{\G}}
\newcommand{\baS}{\mathcal{S}}
\newcommand{\baC}{\mathcal{C}}
\newcommand{\Det}{\mathrm{Det}}
\def\rpkern{\mathchoice{\kern-1.45em}{\kern-1.11em}{}{}}%
\def\grpkern{\mathchoice{\kern-1.013em}{\kern-0.825em}{}{}}%
\def\rpkern{\mathchoice{\kern-1.44em}{\kern-1.11em}{}{}}%
\def\grpkern{\mathchoice{\kern-1.00em}{\kern-0.81em}{}{}}%
\def\rpkern{\mathchoice{\kern-1.472em}{\kern-1.14em}{}{}}%
\def\grpkern{\mathchoice{\kern-1.00em}{\kern-0.815em}{}{}}%
\def\minibullet{\mathchoice%
{\raise0.2ex\hbox{$\scriptstyle\bullet$}}%
{\raise0.26ex\hbox{$\scriptscriptstyle\bullet$}}{}{}}
\def\butabullet{\mathchoice%
{\raise0.8ex\hbox{$\scriptstyle\bullet$}{\kern-0.365em}%
\lower0.4ex\hbox{$\scriptstyle\bullet$}}%
{\raise0.75ex\hbox{$\scriptscriptstyle\bullet$}{\kern-0.335em}%
\lower0.25ex\hbox{$\scriptscriptstyle\bullet$}}{}{}}
\def\customprod#1#2%
\newcommand{\bA}{\mathbb{A}}
\newcommand{\bC}{\mathbb{C}}
\newcommand{\bH}{\mathbb{H}}
\newcommand{\bN}{\mathbb{N}}
\newcommand{\bQ}{\mathbb{Q}}
\newcommand{\bR}{\mathbb{R}}
\newcommand{\bZ}{\mathbb{Z}}
\renewcommand{\a}{\alpha}
\renewcommand{\b}{\beta}
\newcommand{\g}{\gamma}
\renewcommand{\d}{\delta}
\newcommand{\e}{\varepsilon}
\newcommand{\z}{\zeta}
\renewcommand{\l}{\lambda}
\newcommand{\n}{\nu}
\newcommand{\vp}{\varphi}
\newcommand{\G}{\Gamma}
\newcommand{\D}{\Delta}
\renewcommand{\Re}{\mathrm{Re}\,}
\renewcommand{\Im}{\mathrm{Im}\,}
\renewcommand{\det}{\mathrm{det}\,}
\newcommand{\Spec}{\mathrm{Spec}\,}
\newcommand{\Gauss}[1]{\lfloor{#1}\rfloor}
\newcommand{\GL}{\mathrm{GL}}
\newcommand{\Prim}{\mathrm{Prim}\,}
\newcommand{\Hyp}{\mathrm{Hyp}\,}
\newcommand{\p}{\partial}
\newcommand{\DS}[1]{\displaystyle{#1}}
\newcommand{\wt}[1]{\widetilde{#1}}
\newcommand{\boldtitle}[1]{\title{\bfseries #1}}
\newenvironment{MSC}{%
\smallbreak
\noindent \textbf{2010\ Mathematics Subject Classification\,:}}
\newenvironment{keywords}{%
\noindent\textbf{Key words and phrases\,:}\itshape}
\newenvironment{Acknowledgement}{%
\noindent\textit{Acknowledgement.}}
\theoremstyle{theorem}
\newtheorem*{multitheorem}{\variable@name}
\theoremstyle{definition}
\newcommand{\variable@name}{Theorem}
\newtheorem*{multiproclaim}{\variable@name}
\theoremstyle{plain}
\newtheorem{thm}{Theorem}[section]
\newtheorem{prop}[thm]{Proposition}
\newtheorem{lem}[thm]{Lemma}
\newtheorem{cor}[thm]{Corollary}
\theoremstyle{definition}
\newtheorem{example}[thm]{Example}
\newtheorem{remark}[thm]{Remark}
\author{
 Nobushige KUROKAWA
 Masato WAKAYAMA\thanks{Partially supported by Grant-in-Aid for Scientific Research (B) No. 21340011.} 
 \ and
 Yoshinori YAMASAKI\thanks{Partially supported by Grant-in-Aid for Young Scientists (B) No. 21740019.}
} 
\date{\today}
\begin{document}

\setlength{\baselineskip}{15pt}
\maketitle

\begin{abstract}
 By a similar idea for the construction of Milnor's gamma functions,  
 we introduce ``higher depth determinants'' of the Laplacian
 on a compact Riemann surface of genus greater than one.
 We prove that,
 as a generalization of the determinant expression of the Selberg zeta function,
 this higher depth determinant can be expressed as a product of multiple gamma functions and
 what we call a Milnor-Selberg zeta function.
 It is shown that the Milnor-Selberg zeta function 
 admits an analytic continuation, a functional equation and, 
 remarkably, has an Euler product.
\begin{MSC}
 {\it Primary} 11M36,
 {\it Secondary} 11F72.
\end{MSC} 
\begin{keywords}
 Selberg's zeta function, 
 determinants of Laplacians,
 multiple gamma function,
 zeta regularized product,
 functional equations,
 Euler product.
\end{keywords}
\end{abstract}

\tableofcontents

\section{Introduction}
\label{sec:introduction}

 In 1983, Milnor (\cite{M}) introduced a family of functions $\{\g_{r}(z)\}_{r\ge 1}$
 what he thought as a kind of ``higher depth gamma functions''.
 These are defined as partial derivatives of the Hurwitz zeta function
 $\z(w,z):=\sum_{n=0}^\infty (n+z)^{-w}$ at non-positive integer points
 with respect to the variable $w$.
 We call $\g_{r}(z)$ a Milnor gamma function of depth $r$
 and will denote it by $\mG_r(z)$
 (see \cite{KOW2} for several analytic properties of $\mG_r(z)$).
 The purpose of Milnor's study about such functions is to construct functions 
 satisfying a modified version of the Kubert identity
 $f(x)=m^{s-1}\sum^{m-1}_{k=0}f(\frac{x+k}{m})$ (\cite{Kubert}),
 which plays an important role in the study of Iwasawa theory.

 The aim of the present paper is, 
 as an analogue of the study of the Milnor gamma functions, 
 to study ``higher depth determinants''
 of the Laplacians on compact Riemann surfaces
 of genus $g\ge 2$ with negative constant curvature.
 Let us give the definition of the higher depth determinant in more general situations.
 Let $A$ be a linear operator on some space.
 We assume that $A$ has only discrete spectrum 
 with eigenvalues $0=\l_0<\l_1\le\l_2\le\cdots\to \infty$.
 Define a spectral zeta function $\z_{A}(w,z)$ of Hurwitz's type by
\[
 \z_{A}(w,z):=\sum^{\infty}_{j=0}(\l_j+z)^{-w}.
\]
 We further assume that the series converges absolutely and uniformly for $z$ on any compact set
 in some right half $w$-plane, 
 and can be continued meromorphically to a region containing $w=1-r$, for $r\in\bN$.
 Moreover, we assume that it is holomorphic at $w=1-r$.
 In such a situation, we define a {\it higher depth determinant} of $A$ of depth $r$ by
\[
  \Det_{r}(A+z):=
 \exp\Bigl(-\frac{\p}{\p w}\z_{A}(w,z)\Bigl|_{w=1-r}\Bigl).
\]
 When $r=1$, this gives the usual (zeta-regularized) determinant of $A$.
 Notice that if $A$ is a finite-rank operator and
 $\l_1,\ldots,\l_{N}$ are their eigenvalues,
 then we have $\Det_r(A)=\prod^{N}_{j=1}\l_{j}^{\l_{j}^{r-1}}$,
 whence $\Det_{r}(A\oplus B)=(\Det_{r}A)\cdot(\Det_{r}B)$
 if both $A$ and $B$ are finite-rank.

 To state our main results, let us recall 
 the case of the determinant of the Laplacian, that is, the case $r=1$.
 Let $\bH$ be the complex upper half plane with the Poincar\'e metric
 and $\G$ a discrete, co-compact torsion-free subgroup of $SL_2(\bR)$.
 Then, $\G\backslash\bH$ becomes a compact Riemann surface of genus $g\ge 2$.
 Let $\Delta_{\G}=-y^2(\frac{\p^2}{\p x^2}+\frac{\p^2}{\p y^2})$
 be the Laplacian on $\G\backslash\bH$, 
 $\l_j$ the $j$th eigenvalue of $\Delta_{\G}$ and put 
 $\Spec{(\Delta_{\G})}:=\{\l_j\,|\,j\in\bN_{0}\}$. 
 We write $\l_j=r_j^2+\frac{1}{4}$ where $r_j\in i\bR_{>0}$
 if $0\le \l_j< \frac{1}{4}$ and $r_j\ge 0$ otherwise.
 Moreover, let $\a^{\pm}_{j}:=\frac{1}{2}\pm ir_j$.
 Notice that 
 $\a^{\pm}_j\in[0,1]$ with $\a_j^{+}<\a_j^{-}$ if $0\le \l_j<\frac{1}{4}$
 and $\Re(\a^{\pm}_j)=\frac{1}{2}$ otherwise.
 It is shown that the series $\z_{\Delta_{\G}}(w,z)$ 
 converges absolutely for $\Re(w)>1$, 
 admits a meromorphic continuation to the whole plane $\bC$ and
 is in particular holomorphic at $w=0$ (see, e.g., \cite{DHP,Sa,V}). 
 Moreover, the determinant $\det(\Delta_{\G}-s(1-s)):=\Det_{1}(\Delta_{\G}-s(1-s))$
 can be calculated as 
\begin{align}
\label{for:determinant}
 \det\bigl(\Delta_{\G}-s(1-s)\bigl)=G_{\G}(s)Z_{\G}(s)=\phi(s)^{g-1}Z_{\G}(s).
\end{align}
 Here, $\phi(s)$ is a meromorphic function defined by 
\begin{align*}
 \phi(s):
&=e^{-2(s-\frac{1}{2})^2-4(s-\frac{1}{2})\z'(0)+4\z'(-1)}\G(s)^{-2}G(s)^{-4}\\
&=e^{-2(s-\frac{1}{2})^2}\G_1(s)^{-2}\G_2(s)^4
\end{align*}
 with $\z(s)$, $\G(s)$, $G(s)$ and $\G_n(s)$
 being the Riemann zeta function, the classical gamma function,
 the Barnes $G$-function ($=$ a double gamma function (\cite{Barnes1899})) 
 and the Barnes multiple gamma function (\cite{Barnes}), respectively. 
 The other factor $Z_{\G}(s)$ in \eqref{for:determinant} is the Selberg zeta function
 defined by the Euler product
\[
 Z_{\G}(s)
:=\prod_{P\in\Prim(\G)}\prod^{\infty}_{n=0}\bigl(1-N(P)^{-s-n}\bigr)
 \qquad (\Re(s)>1).
\]
 Here, $\Prim(\G)$ is the set of all primitive hyperbolic conjugacy classes of $\G$ and
 $N(P)$ is the square of the larger eigenvalue of $P\in\Prim(\G)$.
 It is known that $Z_{\G}(s)$ can be continued analytically to the whole plane $\bC$ and 
 has the functional equation
\begin{equation}
\label{for:fe1}
 Z_{\G}(1-s)=\Bigl(S_1(s)^{2}S_2(s)^{-4}\Bigr)^{g-1}Z_{\G}(s),
\end{equation} 
 where $S_n(s)$ is the normalized multiple sine function
 defined by \eqref{def:multsine} (\cite{KK1}). 
 Notice that if we define the complete Selberg zeta function $\Xi_{\G}(s)$ by
\[
 \Xi_{\G}(s):=\bigl(\G_2(s)^2\G_2(s+1)^2\bigr)^{g-1}Z_{\G}(s),
\] 
 then, the functional equation \eqref{for:fe1} is equivalent to 
\begin{equation}
\label{for:fe2}
 \Xi_{\G}(1-s)=\Xi_{\G}(s).
\end{equation} 
 Moreover, it is known that $Z_{\G}(s)$ has zeros at $s=1$, $0$, $-k$ for $k\in\bN$ 
 with multiplicity $1$, $2g-1$, $2(g-1)(2k+1)$, respectively 
 (the latest are called the trivial zeros
 because they also come from the gamma factor as the Riemann zeta function)
 and at $s=\a^{\pm}_j$ for $j\in\bN$
 (these are called the non-trivial zeros).
 In particular, 
 $Z_{\G}(s)$ satisfies an analogue of the Riemann hypothesis,
 i.e., all imaginary zeros of $Z_{\G}(s)$ are located on the line $\Re(s)=\frac{1}{2}$.
 See \cite{Deninger1992} for arithmetic trial of
 a determinant expression for the Riemann zeta function.
 
 In this paper, as generalizations of the case of $r=1$,
 we study the function 
\begin{align*}
 \mathrm{D}_{\G,r}(s)
:=\Det_r\bigl(\Delta_{\G}-s(1-s)\bigl)
:=\exp\Bigl(-\frac{\p}{\p w}\z_{\Delta_{\G}}\bigl(w,-s(1-s)\bigr)\Bigl|_{w=1-r}\Bigl)
\end{align*}
 for an arbitrary $r\in\bN$.
 Here, for $j$ corresponding to the zero and the exceptional eigenvalues $\l_j$
 (that is, $0\le \l_j<\frac{1}{4}$),
 the summand $(\l_j-s(1-s))^{-w}$
 of the spectral zeta function $\z_{\Delta_{\G}}\bigl(w,-s(1-s)\bigr)$
 is defined by $(\l_j-s(1-s))^{-w}:=\exp(-w\log^{(j)}(\l_j-s(1-s)))$ 
 where $\log^{(j)}$ is one of the branch of the logarithm defined by
 \eqref{def:branch} in Section~\ref{sec:HD}.
 For the other $j$, as usual, we employ the principal branch of the logarithm $\log$
 which takes values in $\bR+i(-\pi,\pi]$.
 We will see in Section~\ref{sec:HD} that 
 the defining domain of $\mathrm{D}_{\G,r}(s)$ is a certain region
 with two connected components
 which are convex and symmetric with respect to the line $\Re(s)=\frac{1}{2}$.
 
Define the region $\Omega_{\G}$ by
\begin{align}
\label{def:omega}
 \Omega_{\G}
:=\bC\setminus\Biggl(\biggl(&\bigcup_{0\le \l_j<\frac{1}{4}}
[\a^{+}_j,\a^{+}_j-i\infty)\cup [\a^{-}_j,\a^{-}_j+i\infty)\biggr)\\
&\ \ \ \cup\biggl(\bigcup_{\l_j\ge \frac{1}{4}}
\bigl[\a^{+}_{j},\frac{1}{2}+i\infty\bigr)\cup \bigl[\a^{-}_{j},\frac{1}{2}-i\infty\bigr)\biggr) \Biggr).
\nonumber
\end{align}
 Here, for $\a,\b\in\bC$,
 the semi-closed line from $\a$ to $\b$ are denoted by $[\a,\b)$ and so on.
 Notice that $\Omega_{\G}$ is connected if and only if
 $\frac{1}{4}\notin\Spec{(\Delta_{\G})}$ (see Figure~$1$).
 The following theorem describes main results of this paper.

\begin{figure}
 \psfrag{Re}{\footnotesize $\Re$}
 \psfrag{Im}{\footnotesize $\Im$}
 \psfrag{A}{\footnotesize $0$}
 \psfrag{B}{\footnotesize $1$}
 \psfrag{C}{\footnotesize $\frac{1}{2}$}
 \psfrag{S}{\footnotesize $\a_j^{+}$}
 \psfrag{T}{\footnotesize $\a_j^{-}$}
\begin{center}
  \includegraphics[clip,width=70mm]{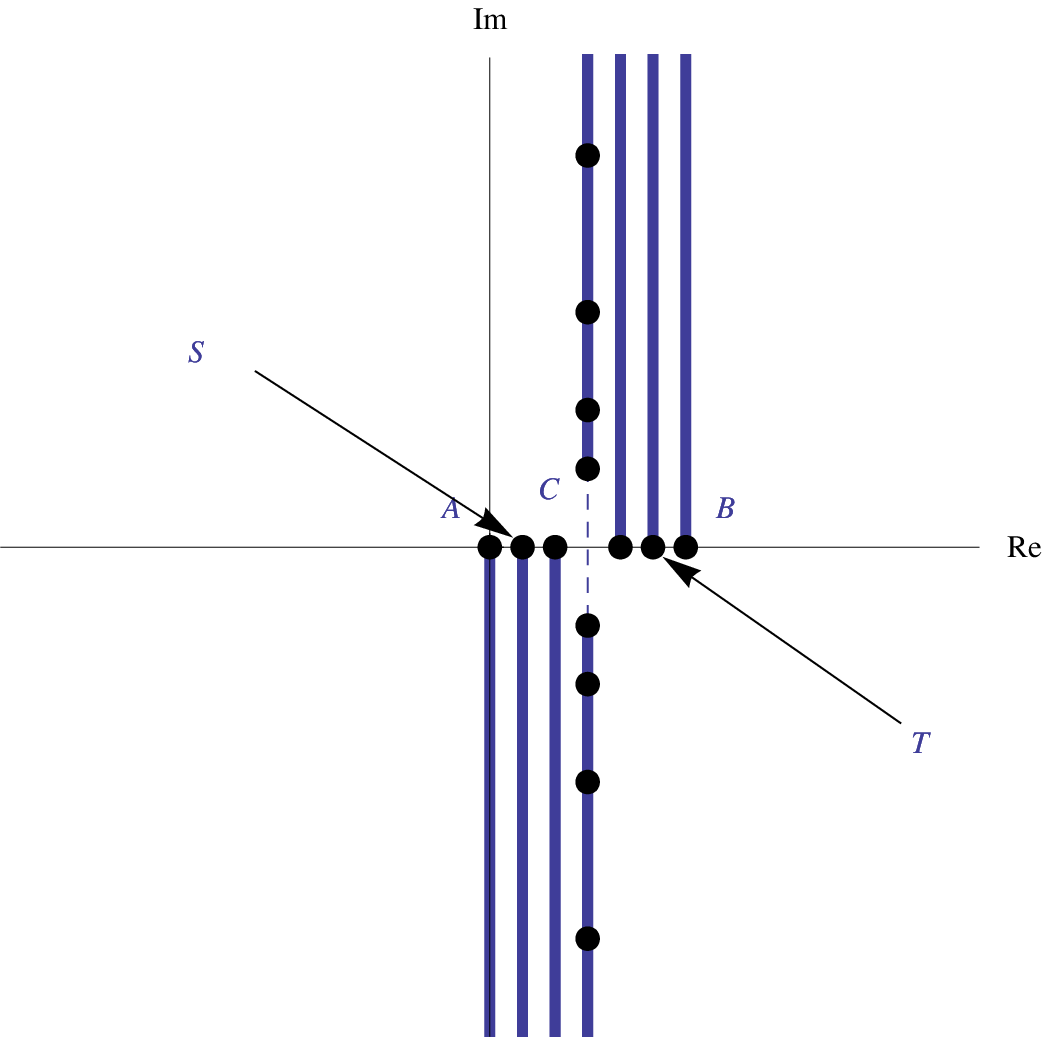}
     \caption{$\mathrm{D}_{\G,r}(s)$ is holomorphic in $\Omega_{\G}$
 ($\frac{1}{4}\notin\Spec{(\Delta_{\G})}$).}
\end{center}

\end{figure}
\begin{figure}[htbp]
\begin{center}
  \begin{tabular}{cc}
   \begin{minipage}{0.5\textwidth}
 \psfrag{Re}{\footnotesize $\Re$}
 \psfrag{Im}{\footnotesize $\Im$}
 \psfrag{A}{\footnotesize $0$}
 \psfrag{D}{\footnotesize $-1$}
 \psfrag{E}{\footnotesize $-2$}
    \begin{center}
     \includegraphics[clip,width=70mm]{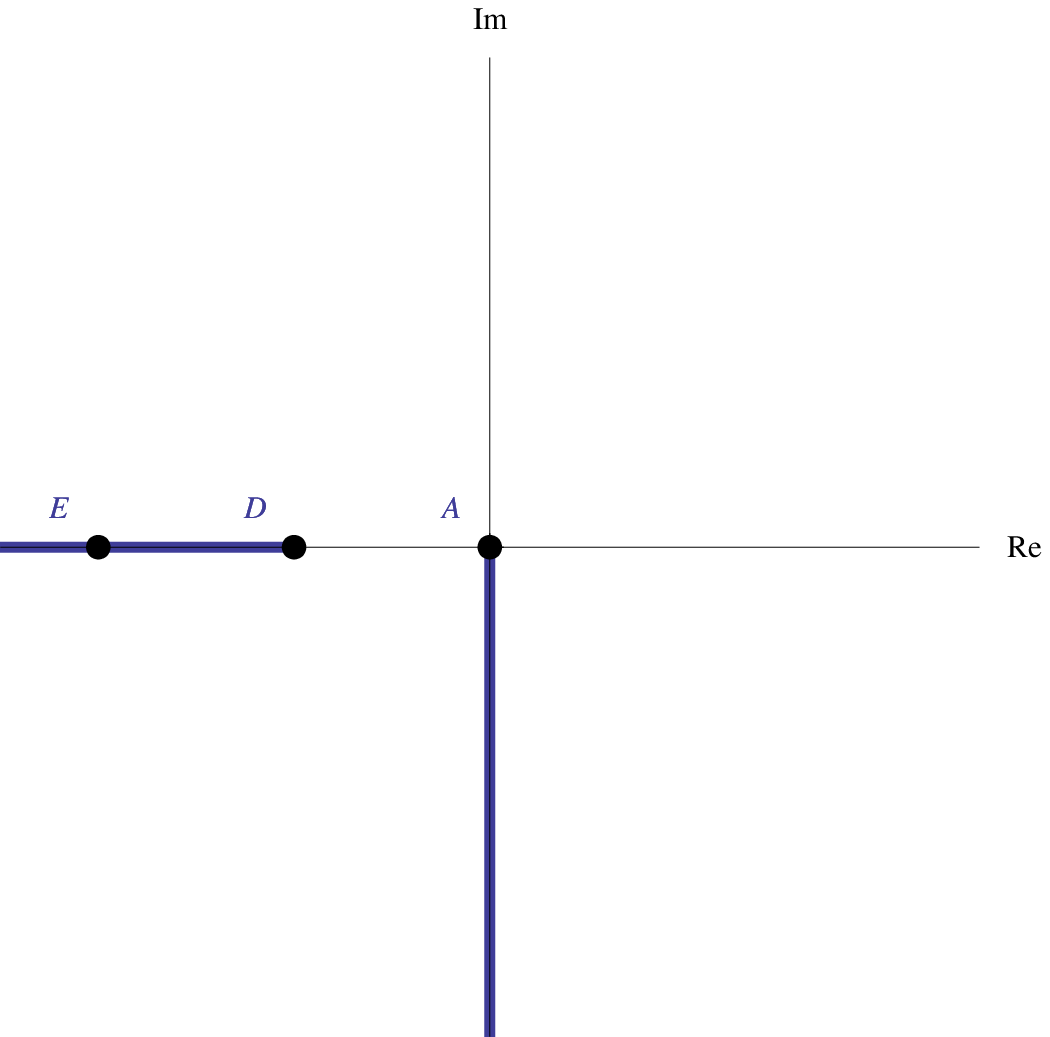}
     \caption{$\phi_r(s)$ is holomorphic in
 $\bC\setminus \bigl((-\infty,-1]\cup [0,-i\infty)\bigr)$.}
    \end{center}
   \end{minipage}
&
   \begin{minipage}{0.5\textwidth}
 \psfrag{Re}{\footnotesize $\Re$}
 \psfrag{Im}{\footnotesize $\Im$}
 \psfrag{A}{\footnotesize $0$}
 \psfrag{C}{\footnotesize $\frac{1}{2}$}
 \psfrag{B}{\footnotesize $1$}
 \psfrag{D}{\footnotesize $-1$}
 \psfrag{E}{\footnotesize $-2$} 
 \psfrag{S}{\footnotesize $\a_j^{+}$}
 \psfrag{T}{\footnotesize $\a_j^{-}$}
    \begin{center}
     \includegraphics[clip,width=70mm]{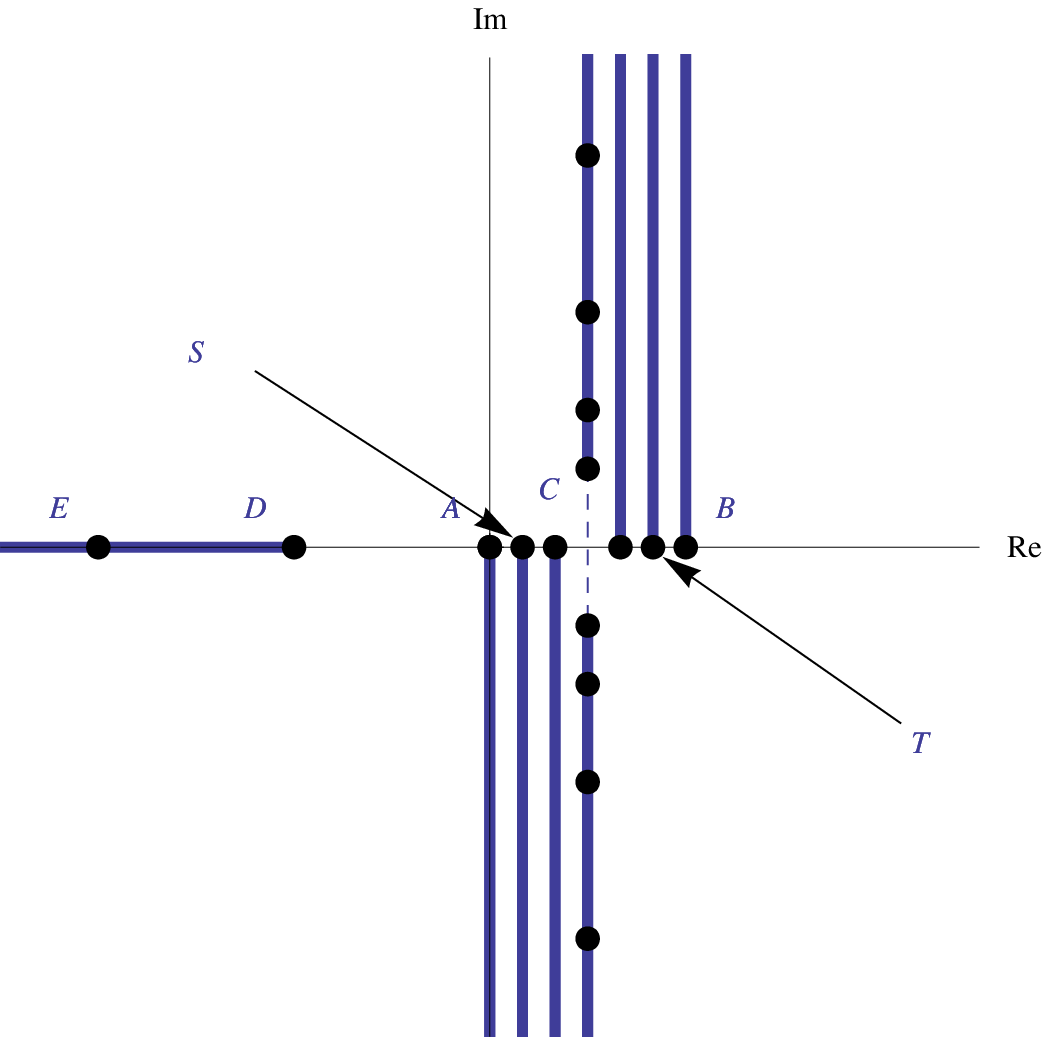}
     \caption{$Z_{\G,r}(s)$ is holomorphic in $\Omega_{\G}\setminus (-\infty,-1]$
 ($\frac{1}{4}\notin\Spec{(\Delta_{\G})}$).}
    \end{center}
\ \\[-35pt]
   \end{minipage}
  \end{tabular}
\end{center}
\end{figure}

\begin{thm}
\label{thm:main}
 Let $r\ge 2$. 

 $(\mathrm{i})$ The function $\mathrm{D}_{\G,r}(s)$
 can be extended holomorphically to the region $\Omega_{\G}$ 
 and satisfies $\mathrm{D}_{\G,r}(1-s)=\mathrm{D}_{\G,r}(s)$.

 $(\mathrm{ii})$ It can be written as 
\begin{align}
\label{for:main+}
 \mathrm{D}_{\G,r}(s)
=G_{\G,r}(s)Z_{\G,r}(s)=\phi_r(s)^{g-1}Z_{\G,r}(s).
\end{align}
 Here,
 $\phi_r(s)$ is a holomorphic function in $\bC\setminus \bigl((-\infty,-1]\cup [0,-i\infty)\bigr)$
 expressed as \eqref{for:gammafactor} by the Milnor gamma functions and, further,
 can be written as a product of multiple gamma functions.
 The other factor $Z_{\G,r}(s)$, which we call a Milnor-Selberg zeta function of depth $r$,
 is a holomorphic function in $\Omega_{\G}\setminus (-\infty,-1]$ 
 having a functional equation 
\begin{equation}
\label{for:fer}
 Z_{\G,r}(1-s)
=\Bigl(\prod^{2r}_{j=1}S_j(s)^{-\a_{r,j}(s-\frac{1}{2})}\Bigr)^{g-1}Z_{\G,r}(s)
\end{equation} 
 for $s\in\Omega_{\G}\setminus \bigl((-\infty,-1]\cup [2,+\infty)\bigr)$
 where $\a_{r,j}(t)$ is the polynomial defined in \eqref{def:alpha}.
 Moreover, $Z_{\G,r}(s)$ has an Euler product expression in a sense that 
 it can be written as a product and quotient of ``poly-Selberg zeta functions'' $Z^{(m)}_{\G}(s)$
 defined by an Euler product for $\Re(s)>1$.
\end{thm}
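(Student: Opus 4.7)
The plan is to reduce the higher-depth case to the classical $r=1$ setting through the quadratic factorisation
\[
\l_j - s(1-s) = (s-\a_j^+)(s-\a_j^-),
\]
valid uniformly for all $j\ge 0$. The region $\Om_\G$ in \eqref{def:omega} is designed precisely so that on it the identity $\log[(s-\a_j^+)(s-\a_j^-)] = \log(s-\a_j^+)+\log(s-\a_j^-)$ holds for every $j$ (using the non-principal branch \eqref{def:branch} on the exceptional range $0\le\l_j<\tfrac{1}{4}$). Hence on $\Om_\G$ one has
\[
\z_{\D_\G}(w,-s(1-s)) = \sum_{j\ge 0}(s-\a_j^+)^{-w}(s-\a_j^-)^{-w},
\]
whose analytic continuation in $w$ to a neighbourhood of $w=1-r$, and holomorphy at that point, can be obtained from the Selberg trace formula by the same argument that gives the classical decomposition \eqref{for:determinant}.

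Applying $-\p_w$ at $w=1-r$ and splitting the logarithm produces
\[
-\sum_{j}(s-\a_j^+)^{r-1}(s-\a_j^-)^{r-1}\bigl[\log(s-\a_j^+) + \log(s-\a_j^-)\bigr],
\]
and I would re-expand the cross-power $(s-\a_j^+)^{r-1}(s-\a_j^-)^{r-1}$ via $s-\a_j^\mp = (s-\a_j^\pm)+(\a_j^\pm-\a_j^\mp)$; since $(\a_j^+-\a_j^-)^2 = 1-4\l_j$, the resulting binomial expansion re-assembles the whole sum into a polynomial-in-$s$ combination of the values and derivatives at $w=1-k$ ($1\le k\le r$) of the auxiliary zeta functions
\[
\xi_\G^\pm(w,s) := \sum_{j\ge 0}(s-\a_j^\mp)^{-w}.
\]
The continuous-spectrum contributions to these are, via the trace formula, expressible in Milnor gamma functions $\mG_k$, which by \cite{KOW2} are products of Barnes multiple gamma functions $\G_n$; collecting them produces $\phi_r(s)^{g-1}$ in the form \eqref{for:gammafactor}, and the remaining hyperbolic-orbital sum over $P\in\Prim(\G)$ is the Milnor--Selberg zeta function $Z_{\G,r}(s)$. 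The identity $\mathrm{D}_{\G,r}(1-s)=\mathrm{D}_{\G,r}(s)$ then follows from $1-\a_j^\pm=\a_j^\mp$, which makes both the summand $(s-\a_j^+)(s-\a_j^-)$ and the cut structure of $\Om_\G$ invariant under $s\mapsto 1-s$.

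For the functional equation \eqref{for:fer} I would substitute the H\"older/Kinkelin reflection identities for the $\G_n$ (equivalently for the $\mG_k$), which express $\mG_k(1-s)/\mG_k(s)$ as a finite product of normalised multiple sines $S_j$. The exponents $\a_{r,j}(s-\tfrac{1}{2})$ emerge by tracking the binomial coefficients from the expansion above against the individual multiple-sine contributions and matching the result to \eqref{def:alpha}. For the Euler-product claim, I would start from
\[
\log Z_\G(s) = -\sum_{P\in\Prim(\G)}\sum_{n\ge 0}\sum_{k\ge 1}\tfrac{1}{k}\,N(P)^{-k(s+n)}
\]
and observe that on the hyperbolic side the higher-depth construction multiplies each summand by a polynomial in $s$ (again from the binomial expansion); absorbing these weights produces polylogarithmic profiles in $N(P)^{-(s+n)}$, whose exponentials are the ``poly-Selberg'' zeta functions $Z_\G^{(m)}(s)$, each manifestly an Euler product on $\Re(s)>1$.

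The main obstacle will be careful branch-tracking and analytic continuation on $\Om_\G$ when exceptional eigenvalues $0\le\l_j<\tfrac{1}{4}$ are present: the points $\a_j^\pm$ are then real in $[0,1]$, which is precisely what forces the vertical cuts in \eqref{def:omega} and requires the non-principal logarithm branches \eqref{def:branch}; verifying that the split $\log[(s-\a_j^+)(s-\a_j^-)]=\log(s-\a_j^+)+\log(s-\a_j^-)$ is consistent across these cuts pins down $\Om_\G$ as the correct domain. A secondary difficulty is obtaining the clean closed form \eqref{def:alpha} of $\a_{r,j}$, which is a combinatorial polynomial identity produced by the binomial bookkeeping above and constitutes the computational crux of the functional-equation step. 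Once these are controlled, the Euler-product structure and its region of convergence are essentially formal.
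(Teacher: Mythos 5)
Your overall architecture (symmetry of $\mathrm{D}_{\G,r}$ from $l_j(1-s)=l_j(s)$, functional equation of $Z_{\G,r}$ by dividing the symmetric $\mathrm{D}_{\G,r}$ by the reflection formula for the gamma factor, Euler product from the hyperbolic side) matches the paper's, but the central step of your plan has a genuine gap. You write the derivative $-\frac{\p}{\p w}\z_{\D_\G}(w,-s(1-s))\bigl|_{w=1-r}$ as the termwise sum $-\sum_j(s-\a_j^+)^{r-1}(s-\a_j^-)^{r-1}\bigl[\log(s-\a_j^+)+\log(s-\a_j^-)\bigr]$ and then rearrange it by binomial expansion into values and derivatives of the auxiliary functions $\xi^\pm_\G(w,s)$ at $w=1-k$. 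That series diverges (its terms grow like $\l_j^{r-1}\log\l_j$), and zeta-regularized quantities are notoriously not stable under such rearrangements: already at depth $1$ the ``multiplicative anomaly'' between $\sum_j\log(\l_j-s(1-s))$ and the split into $\sum_j\log(s-\a_j^\pm)$ is precisely what produces part of the gamma factor, so asserting that the divergent sum ``re-assembles'' into $\partial_w\xi^\pm_\G$ at shifted arguments is assuming the hardest part of the theorem. The paper avoids this entirely: it never splits the spectral sum eigenvalue by eigenvalue, but instead inserts the Selberg trace formula \eqref{for:STF} into the heat-kernel integral \eqref{for:intspectral}, obtaining the decomposition \eqref{for:sz} of the \emph{analytically continued} function into a hyperbolic term $\Theta_{\G,r}(w,t)$ (entire in $w$ because $\log N(\g)>\e_\G$) and identity-contribution moment integrals $J_{2\ell+1}(w,t)$ (continued by contour deformation and shown holomorphic at $w=0$). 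Only because each piece separately is regular at $w=0$ may one differentiate and obtain the factorization \eqref{for:main+}; your proposal supplies no substitute for this justification. Relatedly, the holomorphy of $\mathrm{D}_{\G,r}$ on all of $\Omega_{\G}$ is obtained in the paper by peeling off the finitely many eigenvalues below a threshold $T$ with the branches \eqref{def:branch} and letting $T\to\infty$ (Theorem~\ref{thm:HDDs}); your sketch asserts the domain is ``designed precisely so that the logarithm splits'' but gives no continuation argument.

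A second, independent gap concerns the geometric side. You claim the polynomial weights arising from your binomial expansion of $(s-\a_j^+)^{r-1}(s-\a_j^-)^{r-1}$ (a spectral-side object) transfer to ``polylogarithmic profiles'' in $N(P)^{-(s+n)}$ on the hyperbolic side. In the paper the specific structure of \eqref{for:zetafunction} --- poly-Selberg factors $Z^{(r+m)}_{\G}(s)$ of degrees $r\le r+m\le 2r-1$ with weights $\frac{(r-1+m)!}{m!(r-1-m)!}(2s-1)^{r-1-m}$ --- comes from evaluating $\Theta_{\G,r}(w,t)$ via the Mellin--Bessel formula and the closed form of $K_{r-\frac12}$ at half-integer order; nothing in your outline produces these exponents or explains why polylogarithms of exactly those degrees appear. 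Finally, the explicit coefficients in \eqref{for:gammafactor} and the polynomials $\a_{r,j}$ of \eqref{def:alpha}, which in the paper require the contour evaluation of $J_m$, the series identity for $R_m(t)$ (Proposition~\ref{prop:Pa}), and the binomial lemmas \ref{lem:Cr}--\ref{lem:Drk}, are deferred in your proposal to ``bookkeeping''; as stated, the proposal does not yet constitute a proof of either the factorization or the precise functional equation.
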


 Similarly to the result of $r=1$, 
 we may call $G_{\G,r}(s)$ (or $\phi_r(s)$) a gamma factor.
 Note that $\phi_r(s)$ and $Z_{\G,r}(s)$ are
 generalizations of $\phi(s)$ and $Z_{\G}(s)$, respectively.
 Actually, one can see that $\phi_1(s)=\phi(s)$, $Z_{\G,1}(s)=Z_{\G}(s)$,
 $\a_{1,1}(t)=-2$ and $\a_{1,2}(t)=4$ (see Remark~\ref{rem:constant}) and hence
 the equation \eqref{for:main+} (resp. \eqref{for:fer}) coincides with
 \eqref{for:determinant} (resp. \eqref{for:fe1}) when $r=1$.

 We remark that, from the equation \eqref{for:main+},   
 remarkable cancellation of the singularities on $(-\infty,-1]$ occurs 
 (though both functions $G_{\G,r}(s)$ and $Z_{\G,r}(s)$ have singularities on the above interval
 (see Figure~$2$ and $3$),
 their product $\mathrm{D}_{\G,r}(s)$ does not). 
 Visit also Remark~\ref{rem:multpoly1} for some observations on these singularities. 

 The organization of the paper is as follows.
 In Section~\ref{sec:HD},
 from the integral representation of the spectral zeta function,
 we first show the existence of $\mathrm{D}_{\G,r}(s)$ and, then, study its basic analytic properties. 
 In particular,
 we give a proof of the claim $(\mathrm{i})$ in Theorem~\ref{thm:main} (Theorem~\ref{thm:HDDs}).
 Here, we make a special choice \eqref{def:branch} of the log-branch
 which allows us reasonably to reach the functional equation of $\mathrm{D}_{\G,r}(s)$
 (see Remark~\ref{rem:strict}).
 Next, using the Selberg trace formula, we derive the factorization \eqref{for:main+}.
 In Section~\ref{sec:gamma factor},
 we determine the explicit expression of the gamma factor $\phi_r(s)$   
 in terms of the Milnor gamma function $\mG_r(z)$ (Proposition~\ref{prop:gammafactor}). 
 Furthermore, we give two other expressions of $\phi_r(s)$; one by 
 the Barnes multiple gamma functions $\G_n(z)$
 (Theorem~\ref{thm:gamma factor1}), and another by 
 the Vign\'eras multiple gamma functions $G_n(z)$
 (Corollary~\ref{cor:gamma factor2}).  
 We notice that the latter two take advantage of observing analytic properties of $\phi_r(s)$.
 Using ladder relations of the multiple gamma and sine functions, 
 we also have a functional equation of $\phi_r(s)$ (Theorem~\ref{thm:fegamma}).
 In Section~\ref{sec:zeta factor}, via the expression \eqref{for:main+}, 
 we study analytic properties of the Milnor-Selberg zeta
 function $Z_{\G,r}(s)$ such as
 an analytic continuation and a functional equation (Theorem~\ref{thm:analyticpropZ}).
 Moreover, introducing a ``poly-Selberg zeta function'' $Z^{(m)}_{\G}(s)$
 by a certain Euler product (which is regarded as another generalization of the Selberg zeta function), 
 we show that  $Z_{\G,r}(s)$ can be expressed as a product and quotient of
 $Z_{\G}^{(m)}(s)$ (Theorem \ref{thm:zeta factor}).
 This is nothing but the Euler product expression of $Z_{\G,r}(s)$.

 In the course of the explicit determination of the gamma factor $\phi_r(s)$,
 we will encounter the following series involving the Hurwitz zeta function;
\[
  R_m(t,z):=\sum^{\infty}_{j=1}\frac{\z(2j+1,z)}{2j+m+1}t^{2j+m+1}.
\]
 This type of series has been studied in several places
 (see, e.g., \cite{A,KS1}). 
 It will be shown that the exponential of
 such a series (with $z=\frac{1}{2}$) is expressed as a product
 of the Milnor gamma function (Proposition~\ref{prop:Pa}).
 We also note that a similar discussion developed in this paper yields 
 explicit expressions of the higher depth determinants of the Laplacian 
 on the higher dimensional spheres (\cite{Yamasaki}).
 See also \cite{WakayamaYamasaki} for number theoretic analogues of the present study. 

 In this paper, as usual, $\bC$, $\bR$, $\bQ$ are respectively denoted by the field of
 all complex, real and rational numbers.
 We also use the notations $\bZ$, $\bN$ and $\bN_0$ to denote the set of
 all rational, positive and non-negative integers, respectively.  

\section{Higher depth determinants}
\label{sec:HD}

\subsection{Existence and basic properties}

 Let $\Delta_{\G}$ be the Laplacian
 on the compact Riemann surface $\G\backslash\bH$ of genus $g\ge 2$,
 $0=\l_0<\l_1\le \l_2\le\cdots\to\infty$ the eigenvalues of $\Delta_{\G}$ 
 and $\Spec(\Delta_{\G}):=\{\l_j\,|\,j\in\bN_{0}\}$. 
 For $T\ge 0$, define $\theta^{(T)}_{\Delta_{\G}}(t):=\sum_{\l_j\ge T}e^{-\l_jt}$.
 It is known that $\theta^{(T)}_{\Delta_{\G}}(t)$ converges absolutely for $\Re(t)>0$ and 
 has the asymptotic formula
 $\theta^{(T)}_{\Delta_{\G}}(t)\sim\sum^{\infty}_{n=-1}c^{(T)}_{n}t^{n}$
 as $t\downarrow 0$ with $c^{(T)}_{-1}\ne 0$
 (see, e.g., \cite{Rosenberg1997}).
 Notice that $c^{(T)}_{-1}$ is not depend on $T$ and hence we denote it by $c_{-1}$.
 Define $\z^{(T)}_{\Delta_{\G}}(w,z):=\sum_{\l_j\ge T}(\l_j+z)^{-w}$ 
 where we let $(\l_j+z)^{-w}:=\exp(-w\log{(\l_j+z)})$ for all $j\in\bN_0$.
 In particular, we put $\z_{\Delta_{\G}}(w,z):=\z^{(0)}_{\Delta_{\G}}(w,z)$.
 The series $\z^{(T)}_{\Delta_{\G}}(w,z)$ converges absolutely and uniformly
 in any compact set in 
 $\{w\in\bC\,|\,\Re(w)>1\}\times\{z\in\bC\,|\,\Re(z)>-T\}$ and hence
 defines a holomorphic function in the region.
 At first, one easily sees the following 

\begin{lem}
\label{lem:T}
 The function $\z^{(T)}_{\Delta_{\G}}(w,z)$ admits a meromorphic continuation to the region 
 $\bC\times\{z\in\bC\,|\,\Re(z)>-T\}$
 with a simple pole at $w=1$ and being regular otherwise.
 In other words, it can be written as 
\[
 \z^{(T)}_{\Delta_{\G}}(w,z)=\frac{c_{-1}}{w-1}+\psi^{(T)}_{\Delta_{\G}}(w,z),
\]
 where $\psi^{(T)}_{\Delta_{\G}}(w,z)$ is a holomorphic function in $\bC\times\{z\in\bC\,|\,\Re(z)>-T\}$.
\end{lem}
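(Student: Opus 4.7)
The plan is to use the Mellin-type integral representation of $\z^{(T)}_{\Delta_{\G}}(w,z)$ together with the given short-time asymptotic expansion of $\theta^{(T)}_{\Delta_{\G}}(t)$. First I would verify that, for $\Re(w)>1$ and $\Re(z)>-T$, the identity
\[
 \z^{(T)}_{\Delta_{\G}}(w,z)=\frac{1}{\G(w)}\int_0^\infty t^{w-1}e^{-zt}\theta^{(T)}_{\Delta_{\G}}(t)\,dt
\]
holds, by summing the standard Mellin formula $(\l_j+z)^{-w}=\G(w)^{-1}\int_0^\infty t^{w-1}e^{-(\l_j+z)t}\,dt$ term by term and justifying the interchange of sum and integral via absolute convergence; the condition $\Re(\l_j+z)>0$ is guaranteed by $\l_j\ge T$ and $\Re(z)>-T$.

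Next I would split $\int_0^\infty=\int_0^1+\int_1^\infty$. The tail $\int_1^\infty t^{w-1}e^{-zt}\theta^{(T)}_{\Delta_{\G}}(t)\,dt$ converges uniformly on compact subsets of $\bC\times\{z\in\bC\mid\Re(z)>-T\}$, since $\theta^{(T)}_{\Delta_{\G}}(t)$ decays at least like $e^{-Tt}$ for large $t$ and the surplus from $e^{-zt}$ is dominated; this yields a contribution that is entire in $w$. For the remaining small-$t$ integral, I combine the asymptotic expansion $\theta^{(T)}_{\Delta_{\G}}(t)\sim\sum_{n=-1}^\infty c_n^{(T)}t^n$ with the Taylor expansion $e^{-zt}=\sum_{k\ge 0}(-zt)^k/k!$ to obtain an expansion
\[
 e^{-zt}\theta^{(T)}_{\Delta_{\G}}(t)\sim\frac{c_{-1}}{t}+\sum_{n=0}^{\infty}d_n^{(T)}(z)\,t^n\qquad(t\downarrow 0),
\]
in which each coefficient $d_n^{(T)}(z)$ is a polynomial in $z$, hence entire.

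For any fixed $N\ge 0$, subtracting the terms of this expansion through order $t^N$ from the integrand makes the remainder integrable down to $0$ and defines a holomorphic function of $w$ on $\Re(w)>-N-1$, while the subtracted part contributes the explicit meromorphic expression
\[
 \frac{c_{-1}}{w-1}+\sum_{n=0}^{N}\frac{d_n^{(T)}(z)}{w+n}.
\]
Dividing through by $\G(w)$, whose simple zeros at $w=0,-1,-2,\ldots$ exactly cancel the simple poles at $w=0,-1,\ldots,-N$, leaves a unique remaining pole at $w=1$ with residue $c_{-1}/\G(1)=c_{-1}$. Since $N$ is arbitrary, this yields the meromorphic continuation to all of $\bC\times\{\Re(z)>-T\}$ in the form stated, with $\psi^{(T)}_{\Delta_{\G}}(w,z)$ holomorphic. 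Joint holomorphy in $(w,z)$ follows from the uniform convergence of the defining integrals on compact subsets by a standard application of Morera's theorem in each variable.

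The main technical point is to keep the $z$-dependence under control when combining the two asymptotic expansions: one must check that truncating $e^{-zt}$ to finite order leaves a remainder whose size is uniform in $z$ on compact sets, so that the coefficients $d_n^{(T)}(z)$ indeed assemble into polynomials and the continuation is jointly meromorphic rather than only separately. Once this bookkeeping is done, the cancellation of the gamma-function zeros against the explicit simple poles is routine and isolates the claimed simple pole at $w=1$.
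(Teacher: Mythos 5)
Your proof is correct and follows essentially the same route as the paper: the Mellin integral representation, the split of the integral at $t=1$, subtraction of the short-time heat expansion on $[0,1]$, and cancellation of the resulting poles at $w=0,-1,\ldots$ by the zeros of $1/\G(w)$, isolating the simple pole at $w=1$ with residue $c_{-1}$. The only cosmetic difference is that you fold the Taylor expansion of $e^{-zt}$ into the heat expansion before subtracting (yielding a finite sum of simple poles with polynomial coefficients $d_n^{(T)}(z)$), whereas the paper subtracts only the expansion of $\theta^{(T)}_{\Delta_{\G}}(t)$ and then expands $e^{-zt}$ inside the explicit part, producing an equivalent double series.
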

\begin{proof}
 Let $\Re(w)>1$ and $\Re(z)>-T$.
 Then, from the integral expression 
\begin{equation}
\label{for:intspectral}
 \z^{(T)}_{\Delta_{\G}}(w,z)
=\frac{1}{\G(w)}\int^{\infty}_{0}t^we^{-zt}\theta^{(T)}_{\Delta_{\G}}(t)\frac{dt}{t},
\end{equation}
 for any $N\in\bN_0$, we have 
\begin{align*}
 \z^{(T)}_{\Delta_{\G}}(w,z)
&=\frac{1}{\G(w)}\int^{1}_{0}t^we^{-zt}
\Bigl(\theta^{(T)}_{\Delta_{\G}}(t)-\sum^{N}_{n=-1}c^{(T)}_{n}t^{n}\Bigr)\frac{dt}{t}\nonumber\\
&\ \ \
 +\frac{1}{\G(w)}\int^{1}_{0}t^we^{-zt}\Bigl(\sum^{N}_{n=-1}c^{(T)}_{n}t^{n}\Bigr)\frac{dt}{t}
+\frac{1}{\G(w)}\int^{\infty}_{1}t^we^{-zt}\theta^{(T)}_{\Delta_{\G}}(t)\frac{dt}{t}\nonumber\\
&=\frac{1}{\G(w)}\int^{1}_{0}t^we^{-zt}
\Bigl(\theta^{(T)}_{\Delta_{\G}}(t)-\sum^{N}_{n=-1}c^{(T)}_{n}t^{n}\Bigr)\frac{dt}{t}\nonumber\\
&\ \ \
 +\frac{1}{\G(w)}\sum^{N}_{n=-1}c^{(T)}_{n}\int^{1}_{0}t^{w+n-1}
\Bigl(\sum^{\infty}_{m=0}\frac{(-zt)^m}{m!}\Bigr)dt
+\frac{1}{\G(w)}\int^{\infty}_{1}t^we^{-zt}\theta^{(T)}_{\Delta_{\G}}(t)\frac{dt}{t},\nonumber
\end{align*}
 whence 
\begin{align}
\label{for:ac_speczeta}
 \z^{(T)}_{\Delta_{\G}}(w,z)
&=\frac{1}{\G(w)}\int^{1}_{0}t^we^{-zt}\Bigl(\theta^{(T)}_{\Delta_{\G}}(t)-\sum^{N}_{n=-1}c^{(T)}_{n}t^{n}\Bigr)\frac{dt}{t}\\
&\ \ \ +\frac{1}{\G(w)}\sum^{N}_{n=-1}\sum^{\infty}_{m=0}\frac{c^{(T)}_{n}(-z)^m}{m!(w+n+m)}+\frac{1}{\G(w)}\int^{\infty}_{1}t^we^{-zt}\theta^{(T)}_{\Delta_{\G}}(t)\frac{dt}{t}.\nonumber
\end{align}
 Since $\theta^{(T)}_{\Delta_{\G}}(t)-\sum^{N}_{n=-1}c^{(T)}_{n}t^{n}=O(t^{N+1})$
 as $t\downarrow 0$,
 the first integral in the righthand-side of \eqref{for:ac_speczeta}
 converges absolutely for $\Re(w)>-(N+1)$ and hence, as a function of $w$,
 defines a holomorphic function in the region.
 The second term defines a meromorphic function on $\bC$ having a simple pole at $w=1$
 (notice that the points $w=0,-1,-2,\ldots$ are not poles of $\z^{(T)}_{\Delta_{\G}}(w,z)$
 because of the gamma factor).
 Moreover, since $\Re(z)>-T$,
 the last integral converges absolutely for all $w\in\bC$, 
 whence it defines an entire function.
 Therefore, letting $N\to\infty$, 
 we obtain a meromorphic continuation of $\z^{(T)}_{\Delta_{\G}}(w,z)$
 to $\bC\times\{z\in\bC\,|\,\Re(z)>-T\}$.
\end{proof}

 We now study the higher depth determinants.
 From Lemma~\ref{lem:T}, the function
\[
 \Det^{(T)}_{r}\bigl(\Delta_{\G}+z\bigr)
:=\exp\Bigl(-\frac{\p}{\p w}\z^{(T)}_{\Delta_{\G}}(w,z)\Bigl|_{w=1-r}\Bigr)
\]
 is well-defined and is holomorphic in $\{z\in\bC\,|\,\Re(z)>-T\}$.
 Notice that, since $\frac{\p}{\p w}\z^{(T)}_{\Delta_{\G}}(w,z)\bigl|_{w=1-r}$ is holomorphic, 
 by the definition, $\Det^{(T)}_{r}(\Delta_{\G}+z)$ has no zeros in the region.
 In particular, we put $\Det_r(\Delta_{\G}+z):=\Det^{(0)}_r(\Delta_{\G}+z)$.

\begin{prop}
\label{prop:HDDz}
 The function $\Det_r(\Delta_{\G}+z)$ can be continued analytically to 
\ \\[-20pt]
\begin{itemize}
 \item[$\mathrm{(i)}$] an entire function with zeros at $z=-\l_j$ when $r=1$.\ \\[-20pt]
 \item[$\mathrm{(ii)}$] a holomorphic function in $\bC\setminus (-\infty,0]$ 
 with no zeros when $r\ge 2$. \ \\[-20pt]
\end{itemize}
\end{prop}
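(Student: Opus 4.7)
The strategy is to reduce $\Det_r(\Delta_{\G}+z)$ to $\Det^{(T)}_r(\Delta_{\G}+z)$, which by Lemma~\ref{lem:T} is holomorphic and zero-free on $\{\Re(z)>-T\}$, multiplied by a factor depending only on the finitely many eigenvalues below $T$. Starting from the obvious splitting $\z_{\Delta_{\G}}(w,z) = \z^{(T)}_{\Delta_{\G}}(w,z) + \sum_{\l_j<T}(\l_j+z)^{-w}$, valid for $\Re(w)>1$, $\Re(z)>0$ and then continued meromorphically in $w$, I would differentiate in $w$ and evaluate at $w=1-r$ to obtain
\[
 \Det_r(\Delta_{\G}+z) = \Det^{(T)}_r(\Delta_{\G}+z)\cdot\exp\!\biggl(\sum_{\l_j<T}(\l_j+z)^{r-1}\log(\l_j+z)\biggr),
\]
with the principal branch of $\log$. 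A short verification (apply the same identity with $T_1<T_2$ and compare) shows the right-hand side is independent of $T$, so as $T$ varies it defines a consistent analytic object.

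For part $(\mathrm{i})$, $r=1$, the exponential collapses to the polynomial $\prod_{\l_j<T}(\l_j+z)$. Given any $z_0\in\bC$, choosing $T>|z_0|$ gives a holomorphic representative near $z_0$, so the resulting function is entire. Since $\Det^{(T)}_1(\Delta_{\G}+z)$ is zero-free in $\{\Re(z)>-T\}$, the zeros of $\Det_1(\Delta_{\G}+z)$ come exactly from the polynomial factor and occur at $z=-\l_j$.

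For part $(\mathrm{ii})$, $r\ge 2$, the exponential factor is no longer polynomial. Each summand $(\l_j+z)^{r-1}\log(\l_j+z)$ is holomorphic on $\bC\setminus(-\infty,-\l_j]$, and since $\l_0=0$ every cut $(-\infty,-\l_j]$ is contained in $(-\infty,0]$. Hence the exponential extends holomorphically, and zero-free, to $\bC\setminus(-\infty,0]$. Combined with the zero-free $\Det^{(T)}_r$ on $\{\Re(z)>-T\}$ and the $T$-independence just established, this yields a holomorphic, zero-free extension of $\Det_r(\Delta_{\G}+z)$ to all of $\bC\setminus(-\infty,0]$.

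The main subtlety to carry through carefully is ensuring that the passage from the meromorphic continuation of $\z^{(T)}_{\Delta_{\G}}(w,z)$ in Lemma~\ref{lem:T} to its $w$-derivative at $w=1-r$ preserves holomorphic dependence on $z$ throughout $\{\Re(z)>-T\}$; this follows from uniform convergence on compact subsets of the integral representation \eqref{for:ac_speczeta} and its termwise $w$-differentiation, since $w=1-r$ is a regular point. Once this and the branch-of-log conventions are in place, the rest of the argument is routine bookkeeping, and this proposition sets up the much more delicate analysis (with the quadratic substitution $z=-s(1-s)$) needed for Theorem~\ref{thm:main}.
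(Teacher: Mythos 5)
Your proposal is correct and follows essentially the same route as the paper: split off the finitely many eigenvalues below $T$ from $\z^{(T)}_{\Delta_{\G}}(w,z)$, differentiate at $w=1-r$ to get $\Det_r(\Delta_{\G}+z)=\prod_{0\le\l_j<T}\exp\bigl((\l_j+z)^{r-1}\log(\l_j+z)\bigr)\cdot\Det^{(T)}_r(\Delta_{\G}+z)$, analyze the finite factor (a polynomial for $r=1$, holomorphic and zero-free on $\bC\setminus(-\infty,0]$ for $r\ge 2$), and let $T\to\infty$. Your added remarks on $T$-independence and on justifying the termwise $w$-differentiation are sound refinements of details the paper leaves implicit.
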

\begin{proof}
 Let $\Re(z)>0$.
 Then, for any $T>0$, we have  
\begin{equation*}
 \z_{\Delta_{\G}}(w,z)=\sum_{0\le \l_j<T}(\l_j+z)^{-w}+\z^{(T)}_{\Delta_{\G}}(w,z).
\end{equation*}
 From Lemma~\ref{lem:T}, the righthand-side is holomorphic at $w=1-r$.
 Hence, differentiating both sides at $w=1-r$, we have
\begin{align}
\label{for:methodAC}
 \Det_r\bigl(\Delta_{\G}+z\bigr)
&=\prod_{0\le\l_j<T}\exp\Bigl((\l_j+z)^{r-1}\log(\l_j+z)\Bigr)\cdot
 \Det^{(T)}_r\bigr(\Delta_{\G}+z\bigr).
 \end{align}
 When $r=1$, since $\exp(\log{(\l_j+z)})=\l_j+z$,
 the first factor in the righthand-side of \eqref{for:methodAC} is a polynomial
 and hence is entire.
 This shows that $\Det_1(\Delta_{\G}+z)$ can be extended analytically
 to $\{z\in\bC\,|\,\Re(z)>-T\}$.
 When $r\ge 2$, the first factor in this case is holomorphic in
 $\bigcap_{0\le \l_j<T}\bC\setminus (-\infty,-\l_j]=\bC\setminus (-\infty,0]$,
 whence $\Det_r(\Delta_{\G}+z)$ can be extended to $\{z\in\bC\,|\,\Re(z)>-T\}\setminus(-T,0]$.
 Therefore, letting $T\to\infty$, one obtains the desired claim. 
\end{proof}

 Based on the above discussions, we next study the case $z=-s(1-s)$.
 Write $\l_j=r_j^2+\frac{1}{4}$ where $r_j\in i\bR_{>0}$
 if $0\le \l_j<\frac{1}{4}$ and $r_j\ge 0$ otherwise.
 Moreover, let $\a^{\pm}_{j}:=\frac{1}{2}\pm ir_j$.
 We notice that $\l_j-s(1-s)=(s-\a^{+}_j)(s-\a^{-}_j)$
 and $\a^{\pm}_j\in[0,1]$ with $\a_j^{+}<\a_j^{-}$ if $0\le \l_j<\frac{1}{4}$
 and $\Re(\a^{\pm}_j)=\frac{1}{2}$ otherwise.
 As is the case of the previous discussion, 
 we also start from the zeta function
 $\z^{(T)}_{\Delta_{\G}}(w,-s(1-s))=\sum_{\l_j\ge T}(\l_j-s(1-s))^{-w}$,
 however, in this case, for small $j$, we replace the branch of the logarithm.
 Namely, we let $(\l_j-s(1-s))^{-w}:=\exp(-w\log^{(j)}(\l_j-s(1-s)))$ where
\begin{align}
\label{def:branch}
 \log^{(j)}(\l_j-s(1-s))
&:=\log{|\l_j-s(1-s)|}\\
&\ \ \ +i
\begin{cases}
 \bigl(\arg_{+}(s-\a^{+}_j)+\arg_{-}(s-\a^{-}_j)\bigr) & (0\le \l_j<\frac{1}{4}), \\
 \arg\bigl((s-\a^{+}_j)(s-\a^{-}_j)\bigr) & (\l_j\ge \frac{1}{4})
\end{cases}
\nonumber
\end{align}
 with the argument $\arg_{\pm}$ being respectively taken as
 $-\frac{1}{2}\pi\le \arg_{+}{z}<\frac{3}{2}\pi$ and 
 $-\frac{3}{2}\pi\le \arg_{-}{z}<\frac{1}{2}\pi$ for $z\in\bC$.
 We notice that $\log^{(j)}=\log$ if $\l_j\ge \frac{1}{4}$. 
 It is easy to see that 
 $l_j(s):=\log^{(j)}(\l_j-s(1-s))$ is a single-valued holomorphic function in $W_j$
 where 
 \[
 W_j:=
\begin{cases}
 \DS{\bC\setminus
 \Bigl(\bigl[\a^{+}_{j},\a^{+}_{j}-i\infty\bigr)\cup\bigl[\a^{-}_{j},\a^{-}_{j}+i\infty\bigr)\Bigr)}
 & (0\le \l_j<\frac{1}{4}), \\
 \DS{\bC\setminus
 \Bigl(\bigl[\a^{+}_{j},\frac{1}{2}+i\infty\bigr)\cup\bigl[\a^{-}_{j},\frac{1}{2}-i\infty\bigr)\Bigr)}
 & (\l_j\ge \frac{1}{4}),
\end{cases}
\]
 and satisfies the trivial functional equation $l_j(1-s)=l_j(s)$ for $s\in W_j$. 
 Let $U^{(T)}:=\{s\in \bC\,|\,\Re(-s(1-s))>-T\}$
 (see Figures~$4$, $5$ and $6$ for $T=0$, $T=\frac{1}{4}$ and $T>\frac{1}{4}$, respectively).
 Note that $U^{(T)}$ has two connected components if $0\le T\le \frac{1}{4}$
 and is connected otherwise.
 Clearly, $\lim_{T\to\infty}U^{(T)}=\bC$.
 Moreover, let $W^{(T)}:=\bigcap_{\l_j>T}W_j$. 
 Notice that $W^{(T)}\supset U^{(T)}$.
 From Lemma~\ref{lem:T},
 the function $\z^{(T)}_{\Delta_{\G}}(w,-s(1-s))$
 is meromorphic in $\bC\times (W^{(T)}\cap U^{(T)})=\bC\times U^{(T)}$
 and is in particular holomorphic at $w=1-r$ for all $r\in\bN$.
 Let $\mathrm{D}^{(T)}_{\G,r}(s):=\Det^{(T)}_{r}(\Delta_{\G}-s(1-s))$,
 which is holomorphic in $U^{(T)}$ and
 satisfies $\mathrm{D}^{(T)}_{\G,r}(1-s)=\mathrm{D}^{(T)}_{\G,r}(s)$ for $s\in U^{(T)}$.
 Define the region $\Omega_{\G}:=\bigcap^{\infty}_{j=0}W_j$.
 Notice that $\Omega_{\G}$ is equal to the righthand-side of \eqref{def:omega}
 and remark that it is connected if and only if $\frac{1}{4}\notin\Spec(\Delta_{\G})$.
 Similarly to the previous discussion, 
 an analytic continuation of $\mathrm{D}_{\G,r}(s):=\mathrm{D}^{(0)}_{\G,r}(s)$
 beyond the region $U:=U^{(0)}$
 is given as follows.
 Let $j_{\G}:=\min\{j\,|\,\l_j\ge \frac{1}{4}\}$ and
 $T>\l_{j_{\G}}(\ge \frac{1}{4})$. 
 For $s\in U$, we have
\begin{align}
\label{for:methodACsdet}
 \mathrm{D}_{\G,r}(s)
&=e^{(T)}_{\G,r}(s)\cdot \mathrm{D}^{(T)}_{\G,r}(s),
\end{align} 
 where 
\[
 e^{(T)}_{\G,r}(s)
:=\prod_{0\le \l_j<T}\exp\Bigl(\bigl(\l_j-s(1-s)\bigr)^{r-1}\log^{(j)}{\bigl(\l_j-s(1-s)\bigr)}\Bigr).
\]
 Note that, by the definition, $e^{(T)}_{\G,r}(s)$
 also satisfies the trivial functional equation $e^{(T)}_{\G,r}(1-s)=e^{(T)}_{\G,r}(s)$.
 When $r=1$, since $e^{(T)}_{\G,1}(s)$ is again a polynomial,
 it is an entire function.
 On the other hand, when $r\ge 2$,
 $e^{(T)}_{\G,r}(s)$ is holomorphic in $\Omega_{\G}$.
 From the equation \eqref{for:methodACsdet},
 these show that $\mathrm{D}_{\G,r}(s)$ can be extended to $U^{(T)}$
 if $r=1$ and $U^{(T)}\cap \Omega_{\G}$ otherwise.
 Therefore, letting $T\to\infty$ and noting that
 $\lim_{T\to\infty}U^{(T)}\cap \Omega_{\G}=\Omega_{\G}$,
 one obtains the following

\begin{thm}
\label{thm:HDDs}
 The function $\mathrm{D}_{\G,r}(s)$ can be continued analytically to 
 \ \\[-20pt]
\begin{itemize}
 \item[$\mathrm{(i)}$] an entire function with zeros at $s=\a^{\pm}_j$ when $r=1$. \ \\[-20pt]
 \item[$\mathrm{(ii)}$] a holomorphic function in $\Omega_{\G}$ with no zeros when $r\ge 2$. \ \\[-20pt]
\end{itemize}
 Moreover, $\mathrm{D}_{\G,r}(s)$ satisfies the functional equation
 $\mathrm{D}_{\G,r}(1-s)=\mathrm{D}_{\G,r}(s)$.
\qed
\end{thm}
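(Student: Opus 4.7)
The plan is to reduce everything to a careful version of the factorization \eqref{for:methodACsdet} that already appears in the excerpt, and then to check three things separately: analyticity, zeros/non\textendash vanishing, and the symmetry $s\leftrightarrow 1-s$. First I would observe that by Lemma~\ref{lem:T}, for fixed $T\ge 0$ the function $\z^{(T)}_{\Delta_{\G}}(w,z)$ is holomorphic at $w=1-r$ for every $r\in\bN$ and for $z$ in $\{\Re(z)>-T\}$. Substituting $z=-s(1-s)$ and using that the preimage of $\{\Re(z)>-T\}$ under this map is precisely $U^{(T)}$, one gets that $\mathrm{D}^{(T)}_{\G,r}(s)=\exp\bigl(-\p_w\z^{(T)}_{\Delta_{\G}}(w,-s(1-s))\bigl|_{w=1-r}\bigr)$ is holomorphic and nowhere zero on $U^{(T)}$. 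For $\l_j\ge T$ the principal branch is used (consistent since the points $\l_j-s(1-s)$ avoid the negative real axis throughout $U^{(T)}$), so the term-by-term symmetry $\l_j-s(1-s)=\l_j-(1-s)s$ immediately yields $\mathrm{D}^{(T)}_{\G,r}(1-s)=\mathrm{D}^{(T)}_{\G,r}(s)$ on $U^{(T)}$.

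Second, I would analytically continue across $U=U^{(0)}$ using the formula
\[
 \mathrm{D}_{\G,r}(s)=e^{(T)}_{\G,r}(s)\,\mathrm{D}^{(T)}_{\G,r}(s),\qquad
 e^{(T)}_{\G,r}(s)=\prod_{0\le\l_j<T}\exp\bigl((\l_j-s(1-s))^{r-1}l_j(s)\bigr),
\]
with $l_j(s)=\log^{(j)}(\l_j-s(1-s))$ and $T>\l_{j_{\G}}$. When $r=1$ the exponent collapses to $l_j(s)$, so each factor is simply $\l_j-s(1-s)=(s-\a^{+}_j)(s-\a^{-}_j)$ and $e^{(T)}_{\G,1}(s)$ is a polynomial; hence $\mathrm{D}_{\G,1}(s)$ extends to $U^{(T)}$ and, letting $T\to\infty$, to an entire function whose only zeros are the simple zeros of the polynomial at $s=\a^{\pm}_j$. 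When $r\ge 2$ the relevant observation is that $l_j$ is a single\textendash valued holomorphic function on $W_j$ (as noted in the excerpt), so $(\l_j-s(1-s))^{r-1}l_j(s)$ is holomorphic on $W_j$, its exponential is holomorphic and nowhere vanishing there, and therefore $e^{(T)}_{\G,r}(s)$ is holomorphic and nowhere vanishing on $\bigcap_{\l_j<T}W_j$. Combining with the non\textendash vanishing holomorphy of $\mathrm{D}^{(T)}_{\G,r}$ on $U^{(T)}$ and letting $T\to\infty$ gives the claimed extension to $\Omega_{\G}=\bigcap_{j\ge 0}W_j$ with no zeros.

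Third, the functional equation for the extended $\mathrm{D}_{\G,r}$ is the only nontrivial point left, and it is really the reason the branch \eqref{def:branch} was designed with the symmetric pair of cuts. I would check that $l_j(1-s)=l_j(s)$ on $W_j$: for $\l_j\ge\frac{1}{4}$ this is automatic from the principal branch on $W_j$, while for $0\le\l_j<\frac14$ the map $s\mapsto 1-s$ sends $s-\a^{+}_j$ to $-(s-\a^{-}_j)$ and vice versa, and the ranges $\arg_{+}\in[-\tfrac\pi2,\tfrac{3\pi}2)$, $\arg_{-}\in[-\tfrac{3\pi}2,\tfrac{\pi}2)$ are precisely the pair conjugate under $\arg\mapsto\arg+\pi\pmod{2\pi}$ in such a way that the sum $\arg_{+}(s-\a^{+}_j)+\arg_{-}(s-\a^{-}_j)$ is invariant on $W_j$. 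Consequently $e^{(T)}_{\G,r}(1-s)=e^{(T)}_{\G,r}(s)$, and combined with the already\textendash established symmetry of $\mathrm{D}^{(T)}_{\G,r}$ the product is symmetric as well; uniqueness of analytic continuation on the connected components of $\Omega_{\G}$ then transports the identity to all of $\Omega_{\G}$.

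The main obstacle I expect is precisely the branch\textendash tracking in the last paragraph: making sure that with the asymmetric\textendash looking conventions of \eqref{def:branch} the argument contributions really cancel under $s\mapsto 1-s$, rather than picking up a spurious $\pm 2\pi i$ as $s$ moves around one of the slits. One would have to check this carefully on each of the two connected components of $\Omega_{\G}$ (when $\tfrac14\in\Spec(\Delta_{\G})$) and verify that $s\mapsto 1-s$ preserves each component, so that analytic continuation of the identity from $U$ is legitimate. Everything else is a direct assembly of the ingredients already prepared in Lemma~\ref{lem:T} and the discussion around \eqref{for:methodACsdet}.
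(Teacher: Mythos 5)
Your proposal is correct and follows essentially the same route as the paper: Lemma~\ref{lem:T} gives holomorphy and non-vanishing of $\mathrm{D}^{(T)}_{\G,r}$ on $U^{(T)}$, the factorization \eqref{for:methodACsdet} with the single-valuedness of $l_j$ on $W_j$ handles the continuation (polynomial factor for $r=1$, non-vanishing exponential for $r\ge 2$), and $T\to\infty$ together with $l_j(1-s)=l_j(s)$ yields the functional equation. One small correction to your last paragraph: when $\frac14\in\Spec(\Delta_{\G})$ the involution $s\mapsto 1-s$ \emph{swaps} the two connected components of $\Omega_{\G}$ rather than preserving them, but the identity-theorem argument still goes through because $1-\Omega_{\G}=\Omega_{\G}$ and $U=U^{+}\sqcup U^{-}$ meets each component in a nonempty open set.
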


\begin{figure}[htbp]
\begin{center}
  \begin{tabular}{ccc}
   \begin{minipage}{0.33\textwidth}
 \psfrag{Re}{\footnotesize $\Re$}
 \psfrag{Im}{\footnotesize $\Im$}
 \psfrag{A}{\footnotesize $0$}
 \psfrag{B}{\footnotesize $\frac{1}{2}$}
 \psfrag{C}{\footnotesize $1$}
    \begin{center}
     \includegraphics[clip,width=40mm]{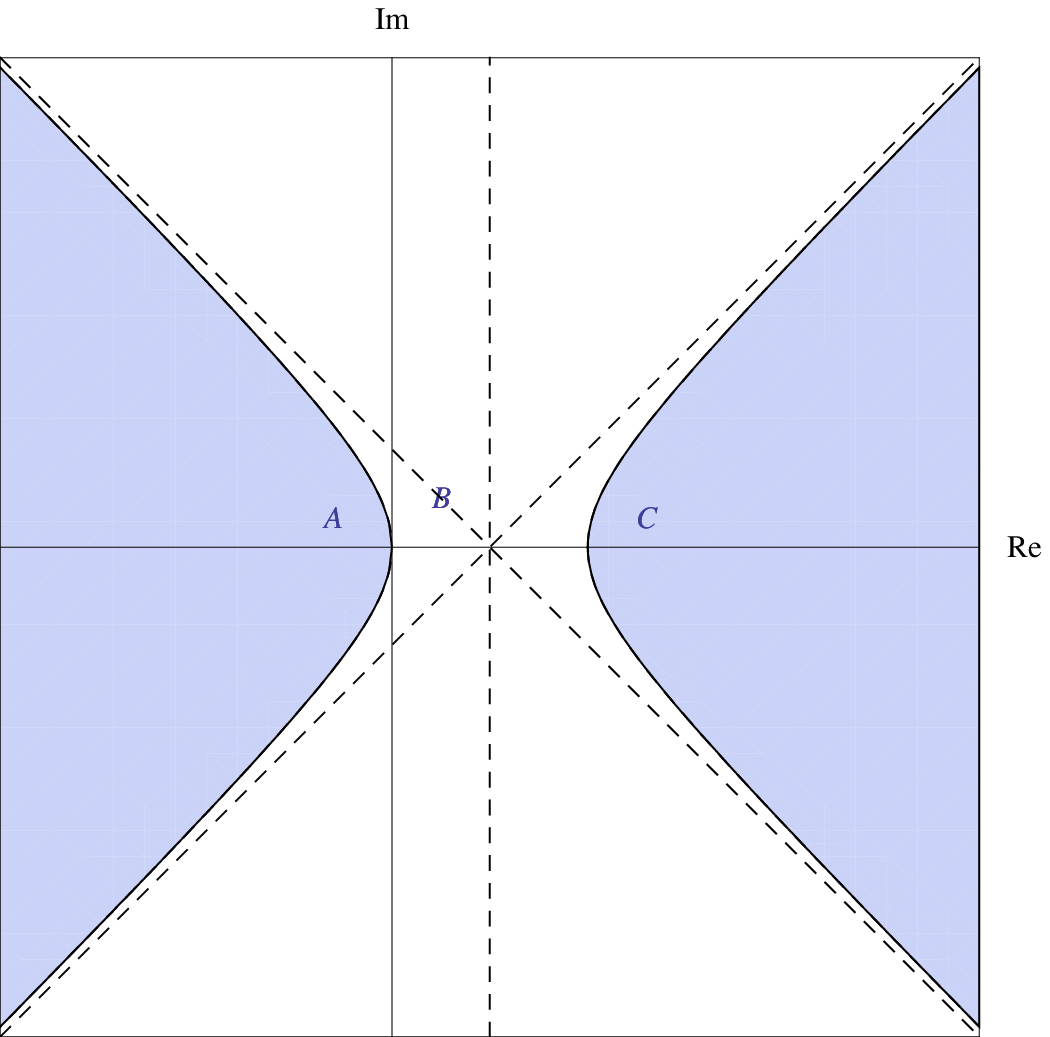}
     \caption{$U=U^{(0)}$.}
    \end{center}
   \end{minipage}
   \begin{minipage}{0.33\textwidth}
 \psfrag{Re}{\footnotesize $\Re$}
 \psfrag{Im}{\footnotesize $\Im$}
 \psfrag{A}{\footnotesize $0$}
 \psfrag{B}{\footnotesize $\frac{1}{2}$}
 \psfrag{C}{\footnotesize $1$}
    \begin{center}
     \includegraphics[clip,width=40mm]{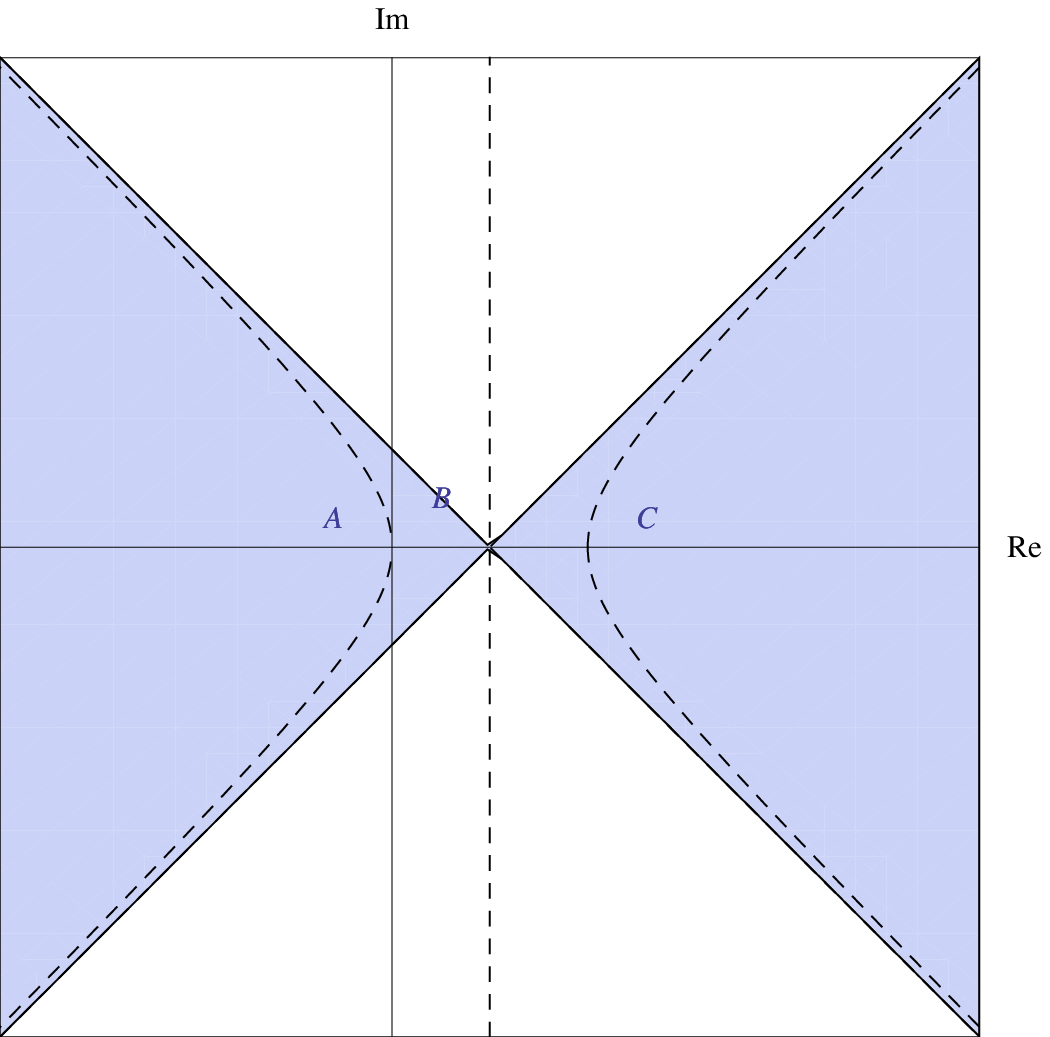}
     \caption{$U^{(\frac{1}{4})}$.}
    \end{center}
   \end{minipage}
  \begin{minipage}{0.33\textwidth}
 \psfrag{Re}{\footnotesize $\Re$}
 \psfrag{Im}{\footnotesize $\Im$}
 \psfrag{A}{\footnotesize $0$}
 \psfrag{B}{\footnotesize $\frac{1}{2}$}
 \psfrag{C}{\footnotesize $1$}
    \begin{center}
     \includegraphics[clip,width=40mm]{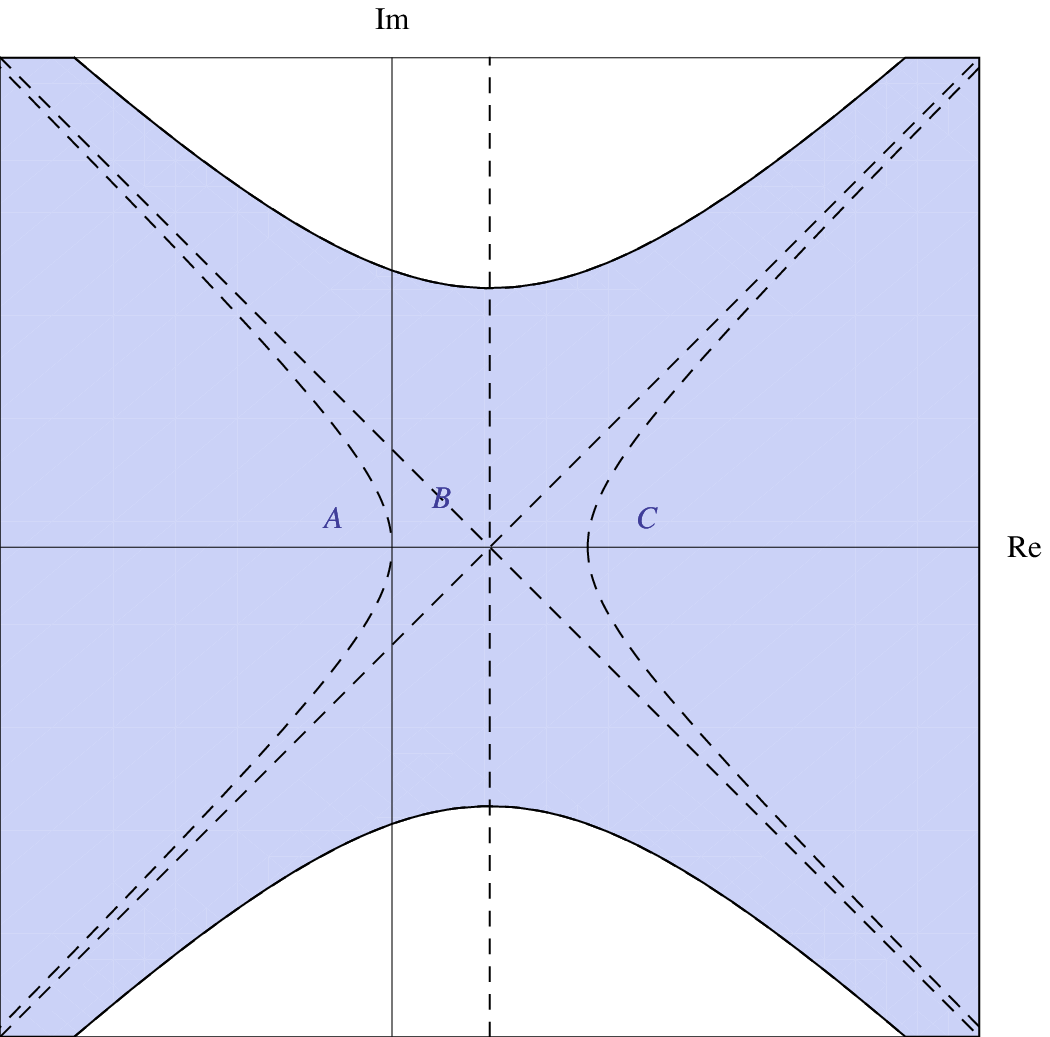}
     \caption{$U^{(T)}$ with $T>\frac{1}{4}$.}
    \end{center}
   \end{minipage}
  \end{tabular}
\end{center}
\end{figure}


\begin{remark}
\label{rem:strict}
 Define the region $U^{\pm}:=U\cap\{s\in\bC\,|\,\pm\Re(s)>\pm\frac{1}{2}\}$.
 Notice that $U=U^{+}\sqcup U^{-}$.
 Since the function $\mathrm{D}_{\G,r}(s)$ is defined on each region $U^{\pm}$, 
 let us write the restriction of $\mathrm{D}_{\G,r}$ to $U^{\pm}$ 
 as $\mathrm{D}^{\pm}_{\G,r}$.
 It is clear that
\begin{equation}
\label{for:fepm}
 \mathrm{D}^{\mp}_{\G,r}(1-s)=\mathrm{D}^{\pm}_{\G,r}(s) \qquad (s\in U^{\pm}).
\end{equation}
 When $r\ge 2$ and $\frac{1}{4}\notin\Spec(\Delta_{\G})$,
 the statements of Theorem~\ref{thm:HDDs} are equivalent to the following.
 Both functions $\mathrm{D}^{\pm}_{\G,r}(s)$
 can be continued analytically to the region $\Omega_{\G}$ and 
 the equation $\mathrm{D}^{+}_{\G,r}(s)=\mathrm{D}^{-}_{\G,r}(s)(=\mathrm{D}_{\G,r}(s))$
 ($s\in\Omega_{\G}$) follows from the identity theorem.
 Hence, from  \eqref{for:fepm},
 the functional equation of $\mathrm{D}_{\G,r}(s)$ is immediate. 
\end{remark}

\begin{remark}
\label{rem:branch}
 Let $r\ge 2$.
 Then, the function $\mathrm{D}_{\G,r}(s)$ is also extended beyond the region $U$
 even if we take the principal branch of the logarithm 
 to define $(\l_j-s(1-s))^{-w}$ for all $j\in\bN_{0}$.
 Actually, by the same manner as above,
 one can show that $\mathrm{D}_{\G,r}(s)$ admits an analytic continuation to the region 
 $\bC\setminus ([0,1]\cup(\frac{1}{2}+i\bR))$,
 which never become connected for any $\G$.  
 Moreover, one finds that 
 our choice of the log-branch is compatible with
 a functional equation and an Euler product expression of
 the Milnor-Selberg zeta function $Z_{\G,r}(s)$ established in Section~\ref{sec:zeta factor}. 
\end{remark}

\subsection{Calculations of the spectral zeta function}

 In this subsection,
 we study the function $\z_{\D_{\G}}(w,-s(1-s))$ by using the Selberg trace formula
\begin{align}
\label{for:STF}
 \sum^{\infty}_{j=0}\hat{f}(r_j)
=\sum_{\g\in\Hyp(\G)}
\frac{\log N(\d_\g)}{N(\g)^{\frac{1}{2}}-N(\g)^{-\frac{1}{2}}}f\bigl(\log N(\g)\bigr)
+(g-1)\int^{\infty}_{-\infty}\hat{f}(r)r\tanh(\pi r)dr.
\end{align}
 Here, $\Hyp(\G)$ is the set of all hyperbolic conjugacy classes in $\G$,
 ${\d}_\g\in\Prim(\G)$ for $\g\in\Hyp(\G)$ is the unique element 
 satisfying $\g={\d}_\g^k$ for some $k\ge 1$ and   
 $f$ is a test function whose Fourier transform
 $\hat{f}(r):=\int^{\infty}_{-\infty}f(x)e^{-irx}dx$ 
 satisfies the conditions 
 $\hat{f}$ is holomorphic in the band $\{r\in\bC\,|\,|\Im r|<\frac{1}{2}+\d\}$, 
 $\hat{f}(-r)=\hat{f}(r)$
 and $\hat{f}(r)=O(|r|^{-2-\d})$ as $|r|\to\infty$ for some $\d>0$,  

 Let $\Re(w)>r$ with $r\in\bN$ and $s\in U$.
 Throughout the present paper, we always denote by $t=t(s):=s-\frac{1}{2}$.
 From \eqref{for:intspectral} with $T=0$, we have 
\[
 \z_{\D_{\G}}\bigl(w+1-r,-s(1-s)\bigr)
=\frac{1}{\G(w+1-r)}\int^{\infty}_{0}\xi^{w+1-r}e^{-t^2\xi}
\Bigl(\sum^{\infty}_{j=0}e^{-r_j^2\xi}\Bigr)\frac{d\xi}{\xi}.
\]
 Applying the trace formula \eqref{for:STF} with the test function
 $f(x)=\frac{1}{2\sqrt{\pi\xi}}e^{-\frac{x^2}{4\xi}}$
 (then $\hat{f}(r)=e^{-r^2\xi}$)
 to the inner sum in the above integral
 and changing the order of the integrations, we have
\begin{align}
\label{for:sz}
 \z_{\D_{\G}}\bigl(w+1-r,-s(1-s)\bigr)
&=\Theta_{\G,r}(w,t)
+(g-1)\int^{\infty}_{-\infty}\bigl(x^2+t^2\bigr)^{-w+r-1}x\tanh(\pi x)dx\\
&=\Theta_{\G,r}(w,t)
+2(g-1)\sum^{r-1}_{\ell=0}\binom{r-1}{\ell}t^{2(r-1-\ell)}J_{2\ell+1}(w,t),
\nonumber
\end{align}
 where
\[
 \Theta_{\G,r}(w,t):
=\frac{1}{\G(w+1-r)}\int^{\infty}_{0}\xi^{w+1-r}\theta_{\G}(\xi,t)\frac{d\xi}{\xi}
\]
 with 
\begin{equation*}
 \theta_{\G}(\xi,t):
=\frac{1}{2\sqrt{\pi\xi}}
\sum_{\g\in\Hyp(\G)}
\frac{\log N(\d_\g)}{N(\g)^{\frac{1}{2}}-N(\g)^{-\frac{1}{2}}}
e^{-t^2\xi-\frac{(\log N(\g))^2}{4\xi}}
\end{equation*}
 and 
\begin{equation}
\label{def:J}
 J_{m}(w,t)
:=\int^{\infty}_{0}\vp_m(x;w,t)dx
\end{equation}
 with
\[
 \vp_m(x;w,t):=(x^2+t^2)^{-w}x^{m}\tanh(\pi x).
\] 
 Here, $(x^2+t^2)^{-w}:=\exp(-w\log{(x^2+t^2)})$.
 We call $J_{m}(w,t)$ an $m$-th moment function. 
 When $s\in U$, since $\Re(t^2)>\frac{1}{4}$ and 
 there exists a constant $\e_{\G}>0$ such that $\log{N(\g)}>\e_{\G}$ for all $\g\in\Hyp(\G)$,
 the integral in $\Theta_{\G,r}(w,t)$ converges absolutely for all $w\in\bC$
 and hence $\Theta_{\G,r}(w,t)$ defines an entire function as a function of $w$. 
 Moreover, we will show in the next section that
 $J_{m}(w,t)$ can be continued meromorphically to the whole $w$-plane $\bC$
 and is in particular holomorphic at $w=0$.
 Therefore, differentiating both sides of \eqref{for:sz} at $w=0$,
 we see that the higher depth determinant
 $\mathrm{D}_{\G,r}(s)$ 
 can be expressed as   
\begin{equation}
\label{for:det_r2}
 \mathrm{D}_{\G,r}(s)=G_{\G,r}(s)Z_{\G,r}(s),
\end{equation} 
 where $G_{\G,r}(s):=\phi_{r}(s)^{g-1}$ with 
\begin{align}
\label{def:gammafactor}
 \phi_{r}(s):
=\prod^{r-1}_{\ell=0}
\exp\Bigl(-\frac{\p}{\p w}J_{2\ell+1}(w,t)\Bigl|_{w=0}\Bigr)^{2\binom{r-1}{\ell}t^{2(r-1-\ell)}}
\end{align}
 and 
\begin{align}
\label{def:zeta}
 Z_{\G,r}(s):
&=\exp\Bigl(-\frac{\p}{\p w}\Theta_{\G,r}(w,t)\Bigl|_{w=0}\Bigr).
\end{align}
 Our next task is to examine each factor $\phi_r(s)$ and $Z_{\G,r}(s)$ more precisely.

\section{Gamma factors $\phi_r(s)$}
\label{sec:gamma factor}

\subsection{Derivative of the moment function $J_{m}(w,t)$ at $w=0$}
\label{subsubsec:J}

 To obtain an explicit expression of the gamma factor $\phi_r(s)$,
 let us study the function $J_{m}(w,t)$ defined by \eqref{def:J}.
 Notice that, for a fixed $s\in\bC$,
 the integral $J_{m}(w,t)$ converges absolutely for $\Re(w)>\frac{m+1}{2}$
 and hence, as a function of $w$, defines a holomorphic function in the region.
 In what follows, from the definition \eqref{def:gammafactor}, 
 we may assume that $m$ is odd. 
 Moreover, for simplicity,
 we assume that $\frac{1}{2}<s<1$, which implies that $0<t<\frac{1}{2}$.
 Then, as a function of $z$,
 $\vp_m(z;w,t)$ is holomorphic in $\{z\in\bC\,|\,\Re(z)>0\}$ and 
 is naturally continued meromorphically 
 to the region $\bC\setminus ((-\infty,0]\cup [-it,it])$
 by extending $\log{(z^2+t^2)}$.

 Suppose that $\Re(w)>\frac{m+1}{2}$.
 Let $R$ be a sufficiently large positive integer 
 and $\e$ a small real number.
 Consider the counterclockwise contour integral
 with the integrand $\vp_m(z;w,t)$
 (on the $z$-plane) along the path
\[
 \mathcal{C}(t;R,\e):=
 [+0,R]\sqcup
 C^{+}(0;R)\sqcup
 [-R,-0]\sqcup
 L_{-}(t;\e)\sqcup
 C(it;\e)\sqcup
 L_+(t;\e),
\] 
 where $C^{+}(0;R)$ is the semi-circle in the upper half plane with radius $R$
 centered at the origin,
 $C(it;\e)$ is the circle with radius $\e$ centered at $it$ and 
 $L_{+}(t;\e)$ (resp. $L_{-}(t;\e)$) is the right (resp. left) side of the segment
 connecting $0$ and $(1-\e)it$.
 We take $\e$ so small that the circle $C(it;\e)$ contains no points of the form
 $i(k+\frac{1}{2})$, that is, the poles of $\tanh(\pi z)$, for all $k\in\bN_{0}$
 (see Figure~$7$). 

\begin{figure}[htbp]
 \label{fig:Contour}
\begin{center}
 \centering
 \psfrag{Re}{\small $\Re$}
 \psfrag{Im}{\small $\Im$}
 \psfrag{A}{\small $-R$}
 \psfrag{B}{\small $R$}
 \psfrag{C}{\small $C$}
 \psfrag{D}{\small $C^{+}$}
 \psfrag{R}{\small $iR$}
 \psfrag{X}{\small $L_{+}$}
 \psfrag{Y}{\small $L_{-}$}
 \psfrag{c}{\small $i\frac{1}{2}$}
 \psfrag{d}{\small \ $\vdots$}
 \psfrag{e}{\small $i(R-\frac{1}{2})$}
 \psfrag{x}{\small $it$}
  \includegraphics[clip,width=80mm]{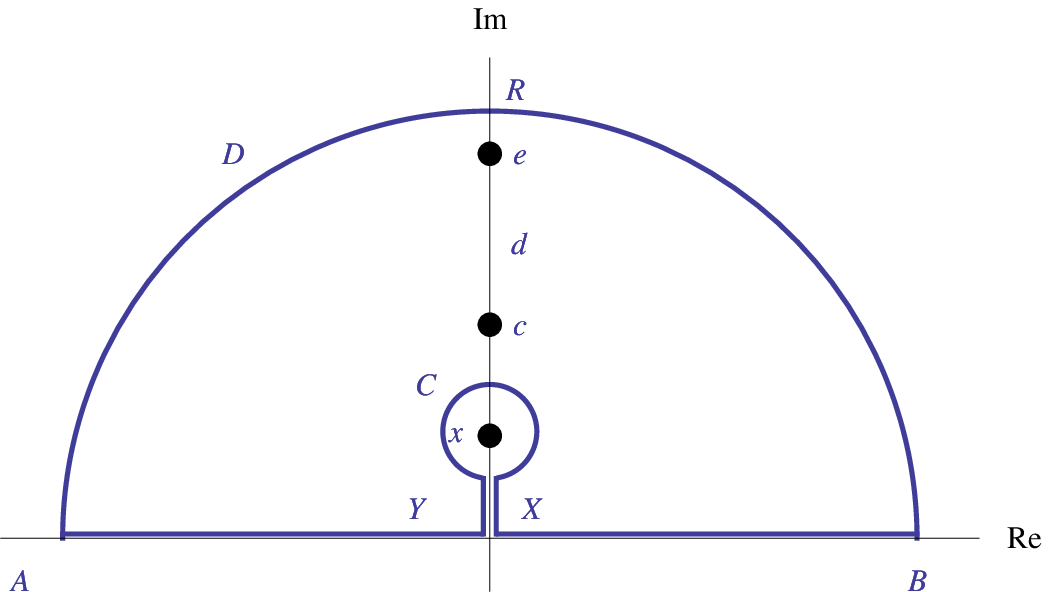}
 \caption{$\mathcal{C}(t;R,\e)$.}  
\end{center}
\end{figure}

 The residue theorem yields 
\begin{equation}
\label{for:int_all}
 \int_{\baC(t;R,\e)}\vp_m(z;w,t)dz
=2i^{m+1}e^{-\pi iw}\sum^{R-1}_{k=0}\bigl(k+\frac{1}{2}\bigr)^{-2w+m}
\Bigl(1-t^2\bigl(k+\frac{1}{2}\bigr)^{-2}\Bigr)^{-w}.
\end{equation}
 On the other hand, we have
\begin{align}
\label{for:int_all_another}
 \int_{\baC(t;R,\e)}\vp_m(z;w,t)dz 
&=\int_{C^{+}(0;R)}\vp_m(z;w,t)dz+\Biggl(\int_{[-R,-0]}+\int_{[+0,R]}\Biggr)\vp_m(z;w,t)dz\\
&\ \ \ +A_{m}(w,t;\e)+B_{m}(w,t;\e),\nonumber
\end{align}
 where
\begin{align*}
 A_{m}(w,t;\e)
&:=\Biggl(\int_{L_{+}(t;\e)}+\int_{L_{-}(t;\e)}\Biggr)\vp_m(z;w,t)dz, \\ 
 B_{m}(w,t;\e)
&:=\int_{C(it;\e)}\vp_m(z;w,t)dz.
\end{align*}
 Now, let us calculate each integral in the righthand-side of \eqref{for:int_all_another}.
 At first, it is easy to see that
 the integral on $C^{+}(0;R)$ converges to $0$ as $R\to+\infty$
 because it is $O(R^{m+1-2\Re(w)})$.
 Next, since $m$ is odd and
 $\arg(z^2+t^2)=0$ (resp. $2\pi$) if $z\in [+0,R]$ (resp. $z\in [-R,-0]$),
 we have  
\begin{align*}
 \Biggl(\int_{[-R,-0]}+\int_{[+0,R]}\Biggr)\vp_m(z;w,t)dz
&=(1+e^{-2\pi iw})\int^{R}_{0}\vp_m(x;w,t)dx.\\
&=2e^{-\pi iw}\cos(\pi w)\int^{R}_{0}\vp_m(x;w,t)dx.
\end{align*}
 This shows that  
\begin{align*}
 \lim_{R\to+\infty}\Biggl(\int_{[-R,-0]}+\int_{[+0,R]}\Biggr)\vp_m(z;w,t)dz
=2e^{-\pi iw}\cos(\pi w)J_{m}(w,t).
\end{align*}
 Finally, we calculate $A_{m}(w,t;\e)$ and $B_{m}(w,t;\e)$.
 Notice that both functions are entire as functions of $w$
 and, by the Cauchy integral theorem,
 do not depend on the choice of $\e$.
 Since $\arg(z^2+t^2)=0$ (resp. $2\pi$)
 if $z\in L_{+}(t;\e)$ (resp. $z\in L_{-}(t;\e)$), we have
\begin{align*}
 A_{m}(w,t;\e)
&=(1-e^{-2\pi iw})\int^{(1-\e)t}_{0}\varphi_m(i\xi;w,t)id\xi\\
&=2i^{m+1}t^{-2w}e^{-\pi iw}\sin(\pi w)
\int^{(1-\e)t}_{0}\bigl(1-\frac{\xi^2}{t^2}\bigr)^{-w}\xi^m\tan(\pi \xi)d\xi,
\end{align*} 
 whence
\begin{align*}
 A_{m}(w,t;\e)=\lim_{\e\to 0}A_{m}(w,t;\e)
&=2i^{m+1}t^{-2w}e^{-\pi iw}\sin(\pi w)
\int^{t}_{0}\bigl(1-\frac{\xi^2}{t^2}\bigr)^{-w}\xi^m\tan(\pi \xi)d\xi.
\end{align*} 
 Moreover, by straightforward calculations, 
 we see that $B_{m}(w,t;\e)=O(\e^{1-\Re(w)})$ as $\e\to 0$.
 This shows that $B_{m}(w,t;\e)=\lim_{\e\to 0}B_{m}(w,t;\e)=0$ for $\Re(w)<1$,
 whence, by the identity theorem, $B_{m}(w,t;\e)=0$ for all $w\in\bC$.
 Therefore, letting $R\to+\infty$
 in the formulas \eqref{for:int_all} and \eqref{for:int_all_another},
 we have 
\begin{align}
\label{for:Jn_middle}
 J_{m}(w,t)
&=\frac{i^{m+1}}{\cos(\pi w)}
\Bigl(J_{m,1}(w,t)+J_{m,2}(w,t)\Bigr),
\end{align}
 where 
\begin{align}
\label{for:Jm1}
 J_{m,1}(w,t)
&:=-t^{-2w}\sin(\pi w)
\int^{t}_{0}\bigl(1-\frac{\xi^2}{t^2}\bigr)^{-w}\xi^m\tan(\pi \xi)d\xi,\\
\label{for:Jm2}
 J_{m,2}(w,t)
&:=\sum^{\infty}_{k=0}\bigl(k+\frac{1}{2}\bigr)^{-2w+m}
\Bigl(1-t^2\bigl(k+\frac{1}{2}\bigr)^{-2}\Bigr)^{-w}.
\end{align}

 The function $J_{m,1}(w,t)$ is clearly entire as a function of $w$.
 Let us calculate the derivative of $J_{m,1}(w,t)$ at $w=0$.
 To do that, we employ the following basic multiple trigonometric functions
 (\cite{KOW1}, see also \cite{KK1}).
 Put $P_1(u):=(1-u)$ and
 $P_n(u):=(1-u)\exp(u+\frac{u^2}{2}+\cdots+\frac{u^n}{n})$ for $n\ge 2$. 
 Then, the basic multiple sine $\baS_n(z)$ and
 cosine function $\baC_n(z)$ are respectively defined by 
\begin{align*}
 \baS_n(z):
&=
\begin{cases}
\DS{2\pi z
\prod_{m\in\bZ\atop m\ne 0}
P_1\bigl(\frac{z}{m}\bigr)
=2\pi z\prod^{\infty}_{m=1}\Bigl(1-\frac{z^2}{m^2}\Bigr)
=2\sin(\pi z)}  & (n= 1),\\
\DS{\exp\Bigl(\frac{z^{n-1}}{n-1}\Bigr)
\prod_{m\in\bZ\atop m\ne 0}
\Bigl(P_n\bigl(\frac{z}{m}\bigr)P_n\bigl(-\frac{z}{m}\bigr)^{(-1)^{n-1}}\Bigr)^{m^{n-1}}}
 & (n\ge 2),
\end{cases}\\
 \baC_n(z):
&=\prod_{m\in\bZ\atop m:\mathrm{odd}}
 P_n\Bigl(\frac{z}{(\frac{m}{2})}\Bigr)^{(\frac{m}{2})^{n-1}}
=\baS_n(2z)^{2^{1-n}}\baS_n(z)^{-1}.
\end{align*}
 Notice that, when $n\ge 2$, $\baS_n(0)=\baC_n(0)=1$.
 Moreover, for $z\ne 0$, 
 we have the following integral expressions. 
\begin{align}
\label{for:int_multtrig}
 \baS_n(z)
=\exp\Bigl(\int^{z}_{0}\pi \xi^{n-1}\cot(\pi \xi)d\xi\Bigr),\quad 
 \baC_n(z)
=\exp\Bigl(-\int^{z}_{0}\pi \xi^{n-1}\tan(\pi \xi)d\xi\Bigr).
\end{align}

\begin{prop} 
 It holds that   
\begin{equation}
\label{for:der_J1}
 \frac{\p}{\p w}J_{m,1}(w,t)\Bigl|_{w=0}
=\log{\baC_{m+1}(t)}.
\end{equation}
\end{prop}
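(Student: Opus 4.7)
The plan is to exploit the fact that $J_{m,1}(w,t)$ carries an overall factor of $\sin(\pi w)$, which vanishes at $w=0$. Consequently, when differentiating $J_{m,1}(w,t)$ with respect to $w$ and evaluating at $w=0$, by the product rule every term containing $\sin(\pi w)$ is killed, and only the contribution from differentiating $\sin(\pi w)$ itself survives.

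Concretely, I would first verify that the remaining $w$-dependent factors $t^{-2w}$ and $(1-\xi^2/t^2)^{-w}$ and the integral itself are smooth at $w=0$, so that the product rule applies cleanly. At $w=0$ one has $t^{-2w}=1$, $(1-\xi^2/t^2)^{-w}=1$, and $\frac{d}{dw}\sin(\pi w)|_{w=0}=\pi$. Hence
\begin{equation*}
 \frac{\p}{\p w}J_{m,1}(w,t)\Bigl|_{w=0}
=-\pi\int^{t}_{0}\xi^m\tan(\pi \xi)d\xi.
\end{equation*}
(A small point to mention is that the differentiation under the integral sign is justified for $0<t<\tfrac{1}{2}$, since $\tan(\pi\xi)$ is bounded on $[0,t]$ and the integrand is jointly continuous in $(w,\xi)$ in a neighborhood of $w=0$.)

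The final step is to recognize the right-hand side as $\log\baC_{m+1}(t)$ by applying the integral representation \eqref{for:int_multtrig} of the basic multiple cosine with $n=m+1$, namely
\begin{equation*}
 \log\baC_{m+1}(t)=-\int^{t}_{0}\pi\xi^{m}\tan(\pi\xi)d\xi.
\end{equation*}
This immediately yields \eqref{for:der_J1}. Since the claim holds on the real interval $0<t<\tfrac{1}{2}$ (equivalently, for $\tfrac{1}{2}<s<1$) and both sides are holomorphic in $t$ on an appropriate domain, the identity then propagates by analytic continuation wherever needed in the subsequent analysis.

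There is no serious obstacle here; the only mildly subtle point is simply noticing that the $\sin(\pi w)$ prefactor collapses the computation to a single residual integral, and then matching that integral to the standard integral representation of $\baC_{m+1}$.
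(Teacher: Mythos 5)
Your proposal is correct and follows essentially the same route as the paper: the paper expands $t^{-2w}=1+O(w)$, $\sin(\pi w)=\pi w+O(w^3)$, and $(1-\xi^2/t^2)^{-w}=1+O(w)$ to read off the linear coefficient $-\int_0^t\pi\xi^m\tan(\pi\xi)\,d\xi$, which is exactly your product-rule observation that only the differentiated $\sin(\pi w)$ factor survives at $w=0$, and then both arguments conclude by the integral representation \eqref{for:int_multtrig} of $\baC_{m+1}$.
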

\begin{proof}
 From \eqref{for:Jm1}, we have
\begin{align*}
 J_{m,1}(w,t)
&=-\bigl(1+O(w)\bigr)\bigl(\pi w+O(w^3)\bigr)
\int^{t}_{0}\bigl(1+O(w)\bigr)\xi^{m}\tan(\pi \xi)d\xi\\
&=\Bigl(-\int^{t}_{0}\pi \xi^{m}\tan(\pi \xi)d\xi\Bigr)w+O(w^2).
\end{align*}
 Hence, the claim follows from the formula \eqref{for:int_multtrig}.
\end{proof}

 We next study the function $J_{m,2}(w,t)$. 
 We first show the following 

\begin{lem}
 We have   
\begin{align}
\label{for:J2}
 J_{m,2}(w,t)
&=\sum^{\infty}_{j=0}\binom{w+j-1}{j}
t^{2j}\z\bigl(2w+2j-m,\frac{1}{2}\bigr).
\end{align}
 This gives a meromorphic continuation to the whole plane $\bC$
 as a function of $w$ with possible simple poles at $w=\frac{m+1}{2}-j$
 for $j\in\bN_{0}$. In particular, it is holomorphic at $w=0$.
 \end{lem}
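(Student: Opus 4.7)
The plan is to expand the factor $\bigl(1 - t^2(k+\tfrac{1}{2})^{-2}\bigr)^{-w}$ by the generalized binomial series $(1-u)^{-w} = \sum_{j\ge 0}\binom{w+j-1}{j}u^j$, valid for $|u|<1$. Since $0<t<\tfrac{1}{2}$ was assumed, the worst case $k=0$ gives $u=4t^2<1$, so this expansion is legitimate termwise. Substituting into \eqref{for:Jm2} produces a double series
\[
 J_{m,2}(w,t)=\sum_{k=0}^{\infty}\sum_{j=0}^{\infty}\binom{w+j-1}{j}t^{2j}(k+\tfrac{1}{2})^{-2w+m-2j},
\]
which converges absolutely when $\Re(w)>\tfrac{m+1}{2}$: the $t^{2j}$ decays geometrically, while the $k$-sum is dominated by the convergent Hurwitz sum for $j=0$. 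Interchanging the order of summation and recognizing $\sum_{k\ge 0}(k+\tfrac{1}{2})^{-2w-2j+m}=\z(2w+2j-m,\tfrac{1}{2})$ yields the claimed identity \eqref{for:J2}.

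For the meromorphic continuation, I rely on the fact that $\z(s,\tfrac{1}{2})$ has only a simple pole at $s=1$. Hence the $j$-th term in \eqref{for:J2} has (at most) a simple pole at $w=\tfrac{m+1}{2}-j$ and is otherwise holomorphic. On any compact set $K\subset\bC$ that avoids these pole loci, the elementary estimates $\binom{w+j-1}{j}=O(j^{\Re(w)-1})$ together with $\z(2w+2j-m,\tfrac{1}{2})\to 1$ uniformly on $K$ as $j\to\infty$, combined with the geometric decay $|t|^{2j}<2^{-2j}$, guarantee that the series \eqref{for:J2} converges uniformly on $K$. This extends $J_{m,2}(w,t)$ meromorphically to $\bC$ and pins the pole set to $\{\tfrac{m+1}{2}-j\mid j\in\bN_0\}$.

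Holomorphy at $w=0$ follows from a cancellation. For $j\ge 1$ one writes
\[
 \binom{w+j-1}{j}=\frac{w(w+1)\cdots(w+j-1)}{j!},
\]
which has a zero at $w=0$ coming from the first factor. Thus, in the one case where $m$ is odd and $j_{0}:=\tfrac{m+1}{2}\in\bN$ so that $\z(2w+2j_{0}-m,\tfrac{1}{2})$ has a simple pole at $w=0$, the pole is killed by this binomial zero. The remaining $j=0$ summand is $\z(2w-m,\tfrac{1}{2})$, which is holomorphic at $w=0$ since $-m\ne 1$ for $m\in\{1,3,5,\ldots\}$. Therefore $J_{m,2}(w,t)$ is holomorphic at $w=0$.

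The only genuinely delicate point is verifying uniform convergence on compacta avoiding the pole set; otherwise the argument reduces to a routine binomial expansion and identification with a Hurwitz zeta value, so I do not expect any serious obstacle.
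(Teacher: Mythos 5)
Your proof follows essentially the same route as the paper's: binomial expansion of $\bigl(1-t^2(k+\frac{1}{2})^{-2}\bigr)^{-w}$, interchange of summation justified by absolute convergence for $\Re(w)>\frac{m+1}{2}$, termwise meromorphic continuation with locally uniform convergence of the tail, and cancellation of the would-be pole at $w=0$ (coming from $j=\frac{m+1}{2}$) by the zero of $\binom{w+j-1}{j}$. One intermediate claim is false, though harmlessly so: $\zeta\bigl(2w+2j-m,\frac{1}{2}\bigr)$ does \emph{not} tend to $1$ as $j\to\infty$; its leading term $(\frac{1}{2})^{-(2w+2j-m)}$ makes it grow like $2^{2j}$ uniformly on compacta. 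Since $t^{2j}\cdot 2^{2j}=(2t)^{2j}$ still decays geometrically for the assumed range $0<t<\frac{1}{2}$, the uniform convergence you need survives with the corrected estimate --- the paper's own proof makes the comparably inaccurate assertion that these zeta values are ``uniformly bounded with respect to $j$'' at the same spot, so this is a shared cosmetic flaw rather than a gap in your argument.
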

\begin{proof}
 Notice that,
 since $0<t<\frac{1}{2}$ (recall that we assume that $\frac{1}{2}<s<1$),
 it holds that $|t^2(k+\frac{1}{2})^{-2}|<1$ for all $k\ge 0$.
 Therefore, from \eqref{for:Jm2}, using the binomial theorem, we have
\begin{align*}
 J_{m,2}(w,t)
&=\sum^{\infty}_{k=0}\bigl(k+\frac{1}{2}\bigr)^{-2w+m}
\sum^{\infty}_{j=0}\binom{w+j-1}{j}t^{2j}\bigl(k+\frac{1}{2}\bigr)^{-2j}\\
&=\sum^{\infty}_{j=0}\binom{w+j-1}{j}t^{2j}\z\bigl(2w+2j-m,\frac{1}{2}\bigr).
\end{align*}
 Hence one obtains the expression \eqref{for:J2}. 
 Moreover, since $\z(2w+2j-m,\frac{1}{2})$ is uniformly bounded with respect to $j$,
 this gives a meromorphic continuation to $\bC$. 
 Now the rest of assertions is clear
 because the Hurwitz zeta function $\z(w,z)$ has a simple pole at $w=1$.
 Notice that the origin is not a pole of $J_{m,2}(w,t)$ 
 since, when $j=\frac{m+1}{2}\ge 1$, $\binom{w+j-1}{j}=O(w)$ as $w\to 0$.  
\end{proof}

 Before calculating the derivative of $J_{m,2}(w,t)$ at $w=0$,
 let us recall the multiple gamma functions,
 which is defined via the Barnes multiple zeta function (\cite{Barnes}) 
\[
 \z_n(w,z)
:=\sum_{m_1,\ldots,m_n\ge 0}\frac{1}{(m_1+\cdots+m_n+z)^w}
\qquad (\Re(w)>n).
\]
 This clearly gives a generalization of the Hurwitz zeta function; $\z_{1}(w,z)=\z(w,z)$.
 It is known that $\z_n(w,z)$ can be continued meromorphically to the whole plane $\bC$
 with possible simple poles at $w=1,2,\ldots,n$.
 The multiple gamma function $\G_{n,r}(z)$ of depth $r$ is defined by
\[
 \G_{n,r}(z)
:=\exp\Bigl(\frac{\p}{\p w}\z_{n}(w,z)\Bigl|_{w=1-r}\Bigr).
\]
 In particular, we put $\G_{n}(z):=\G_{n,1}(z)$ and $\mG_{r}(z):=\G_{1,r}(z)$.
 These are respectably called the Barnes multiple gamma function (\cite{Barnes}) and
 the Milnor gamma function of depth $r$ (\cite{M}, see also \cite{KOW2}).
 From the Lerch formula 
 $\frac{\p}{\p w}\z(w,z)\bigl|_{w=0}=\log\frac{\G(z)}{\sqrt{2\pi}}$, we have  
 $\G_{1,1}(z)=\G_{1}(z)=\mG_1(z)=\frac{\G(z)}{\sqrt{2\pi}}$,
 whence these in fact give generalizations of the classical gamma function. 
 We remark that $\G_n(z)^{-1}$ is an entire function
 with zeros at $z=-k$ of order $\binom{k+n-1}{n-1}$ for $k\in\bN_{0}$.

 From the expression \eqref{for:J2},
 it can be written as  
\begin{align*}
 J_{m,2}(w,t)
&=\z\bigl(2w-m,\frac{1}{2}\bigr)
+\sum^{\frac{m-1}{2}}_{j=1}\binom{w+j-1}{j}
t^{2j}\z\bigl(2w+2j-m,\frac{1}{2}\bigr)\\
&\ \ \ +\binom{w+\frac{m+1}{2}-1}{\frac{m+1}{2}}
t^{m+1}\z\bigl(2w+1,\frac{1}{2}\bigr)
+\sum^{\infty}_{j=\frac{m+3}{2}}\binom{w+j-1}{j}
t^{2j}\z\bigl(2w+2j-m,\frac{1}{2}\bigr)\\
&=:T_1(w,t)+T_2(w,t)+T_3(w,t)+T_4(w,t).
\end{align*}
 Using the expansions
 $\binom{w+j-1}{j}=\frac{1}{j}(w+H(j-1)w^2+O(w^3))$ as $w\to 0$ for $j\ge 1$
 where $H(m):=\sum^{m}_{k=1}\frac{1}{k}$ and 
 $\z(w,z)=\frac{1}{w-1}-\psi(z)+O(w-1)$ as $w\to 1$
 where $\psi(z):=\frac{d}{dz}\log{\G(z)}=\frac{\G'}{\G}(z)$ is the digamma function and
 employing the formula $\z(1-m,z)=-\frac{B_m(z)}{m}$ for $m\in\bN$
 where $B_m(z)$ is the Bernoulli polynomial defined by 
 $\frac{xe^{zx}}{e^x-1}=\sum^{\infty}_{m=0}B_m(z)\frac{x^m}{m!}$, 
 we have 
\begin{align}
\label{for:T1}
 T_1(w,t)
&=\z\bigl(-m,\frac{1}{2}\bigr)
+\frac{\p}{\p w}\z\bigl(2w-m,\frac{1}{2}\bigr)\Bigl|_{w=0}w
+O(w^2)\\
&=-\frac{B_{m+1}(\frac{1}{2})}{m+1}
+\Bigl(2\log{\mG_{m+1}\bigl(\frac{1}{2}\bigr)}\Bigr)w+O(w^2),\nonumber\\
\label{for:T2}
 T_2(w,t)
&=\sum^{\frac{m-1}{2}}_{j=1}
\frac{1}{j}\bigl(w+O(w^2)\bigr)t^{2j}
\Bigl(\z\bigl(2j-m,\frac{1}{2}\bigr)+O(w)\Bigr)\\
&=\Bigl(-\sum^{\frac{m-1}{2}}_{j=1}
\frac{B_{m+1-2j}(\frac{1}{2})}{j(m+1-2j)}t^{2j}\Bigr) w+O(w^2),\nonumber\\
\label{for:T3}
 T_3(w,t)
&=\frac{t^{m+1}}{\frac{m+1}{2}}
\Bigl(w+H\bigl(\frac{m-1}{2}\bigr)w^2+O(w^3)\Bigr)
\Bigl(\frac{1}{2w}-\psi\bigl(\frac{1}{2}\bigr)+O(w^2)\Bigr)\\
&=\frac{1}{m+1}t^{m+1}
+\biggl(\frac{2}{m+1}
\Bigl(
\frac{1}{2}H\bigl(\frac{m-1}{2}\bigr)-\psi\bigl(\frac{1}{2}\bigr)
\Bigr)t^{m+1}\biggr)w+O(w^2),\nonumber\\
\label{for:T4}
 T_4(w,t)
&=\sum^{\infty}_{j=\frac{m+3}{2}}
\frac{1}{j}\bigl(w+O(w^2)\bigr)t^{2j}
\Bigl(\z\bigl(2j-m,\frac{1}{2}\bigr)+O(w)\Bigr)\\
&=\bigl(2R_m(t)\bigr)w+O(w^2)\nonumber
\end{align}
 as $w\to 0$.
 Here, $R_m(t)$ is defined by 
\[
 R_m(t):
=\sum^{\infty}_{j=1}\frac{\z\bigl(2j+1,\frac{1}{2}\bigr)}{2j+m+1}t^{2j+m+1}.
\] 
 This yields the following     

\begin{prop}
 It holds that
\begin{align}
\label{for:der_J2}
 \frac{\p}{\p w}J_{m,2}(w,t)\Bigl|_{w=0}
&=2\sum^{m+1}_{k=1}(-1)^{k}\binom{m}{k-1}t^{m+1-k}
\log{\mG_{k}\bigl(t+\frac{1}{2}\bigr)}\\
&\ \ \ 
-\log{\baC_{m+1}(t)}
+\frac{1}{m+1}\Bigl(H\bigl(\frac{m-1}{2}\bigr)-2H(m)\Bigr)t^{m+1}.\nonumber
\end{align}
\end{prop}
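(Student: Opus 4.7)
Summing the expansions \eqref{for:T1}--\eqref{for:T4} yields
\[
 \frac{\p}{\p w}J_{m,2}(w,t)\Bigr|_{w=0}
 = 2\log\mG_{m+1}(\tfrac{1}{2}) + T_2' + T_3' + T_4',
\]
where the primes denote the coefficient of $w$ in each displayed expansion. The task is to identify this with the right-hand side of the proposition, and my plan is to Taylor-expand both $\log\mG_k(t+\tfrac{1}{2})$ and $\log\baC_{m+1}(t)$ in powers of $t$, substitute into the right-hand side, and match the resulting series coefficient by coefficient against the four pieces of the left-hand side.

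For the first expansion I use the binomial identity
\[
 \z(w, t+\tfrac{1}{2}) = \sum_{\ell \ge 0}(-t)^{\ell}\binom{w+\ell-1}{\ell}\z(w+\ell, \tfrac{1}{2}),
\]
obtained from $(n+\tfrac{1}{2}+t)^{-w} = (n+\tfrac{1}{2})^{-w}(1+t/(n+\tfrac{1}{2}))^{-w}$, and differentiate at $w = 1-k$ to write $\log\mG_k(t+\tfrac{1}{2}) = \sum_\ell(-t)^\ell A_{k,\ell}$ with $A_{k,\ell} := \p_w[\binom{w+\ell-1}{\ell}\z(w+\ell, \tfrac{1}{2})]|_{w=1-k}$. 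The coefficients $A_{k,\ell}$ split into three regimes: for $0 \le \ell \le k-1$ both factors are regular at $w=1-k$ and $A_{k,\ell}$ produces a Milnor-gamma value $\log\mG_{k-\ell}(\tfrac{1}{2})$ plus a harmonic-number correction coming from the derivative of the binomial; for $\ell = k$ the simple zero of $\binom{w+k-1}{k}$ at $w=1-k$ cancels the simple pole of $\z(w+k,\tfrac{1}{2})$, and the Laurent expansions $\binom{\varepsilon}{k} = \tfrac{(-1)^{k-1}}{k}\varepsilon + \tfrac{(-1)^{k}H_{k-1}}{k}\varepsilon^2 + O(\varepsilon^3)$ together with $\z(\varepsilon+1, \tfrac{1}{2}) = \tfrac{1}{\varepsilon} - \psi(\tfrac{1}{2}) + O(\varepsilon)$ give $A_{k,k} = \tfrac{(-1)^k}{k}(H_{k-1}+\psi(\tfrac{1}{2}))$; for $\ell \ge k+1$ only the binomial vanishes while $\z(\ell-k+1,\tfrac{1}{2})$ is a regular positive-integer value, producing $A_{k,\ell} = \tfrac{(-1)^{k-1}(k-1)!(\ell-k)!}{\ell!}\z(\ell-k+1,\tfrac{1}{2})$. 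For $\log\baC_{m+1}(t)$ the integral representation \eqref{for:int_multtrig} combined with the partial-fraction series $\pi\tan(\pi\xi) = 2\sum_{j \ge 0}\z(2j+2,\tfrac{1}{2})\xi^{2j+1}$ yields
\[
 \log\baC_{m+1}(t) = -2\sum_{j \ge 0}\frac{\z(2j+2, \tfrac{1}{2})}{m+2j+2}\,t^{m+2j+2}.
\]

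Substituting both expansions into the right-hand side and sorting by powers of $t$, the matching reduces to five independent items: (1) the $t^0$ coefficient comes solely from $(k,\ell)=(m+1,0)$ in regime (i) and gives $2\log\mG_{m+1}(\tfrac{1}{2})$; (2) the low even powers $t^{2j}$ with $1 \le j \le \tfrac{m-1}{2}$ come from regime (i) and reproduce $T_2'$ once the $\log\mG_{k-\ell}$ contributions collapse by $\sum_v(-1)^v\binom{2j}{v}=0$ and the remaining Hurwitz values are substituted by $\z(1-n,\tfrac{1}{2}) = -B_n(\tfrac{1}{2})/n$; (3) the $t^{m+1}$ coefficient combines the regime-(ii) contributions summed against $(-1)^k\binom{m}{k-1}$ with the elementary $\tfrac{1}{m+1}(H(\tfrac{m-1}{2})-2H(m))$ correction and must equal $T_3'$; (4) the odd powers $t^{m+2j+1}$ with $j \ge 1$ come from regime (iii) and recover $2R_m(t)=T_4'$ through the binomial identity $(2j)!\sum_{k=1}^{m+1}(-1)^{k-1}\binom{m}{k-1}\tfrac{(k-1)!}{(k+2j)!} = \tfrac{1}{m+2j+1}$ (a consequence of the partial-sum formula $\sum_{v=0}^{j}(-1)^v\binom{n}{v} = (-1)^j\binom{n-1}{j}$); (5) the even powers $t^{m+2j+2}$ with $j \ge 0$, again from regime (iii), cancel exactly against the expansion of $-\log\baC_{m+1}(t)$ by the same identity with $2j$ replaced by $2j+1$. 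The main obstacle is item (3): after using $\tfrac{1}{k}\binom{m}{k-1} = \tfrac{1}{m+1}\binom{m+1}{k}$ to dispose of the $\psi(\tfrac{1}{2})$ piece against the elementary correction, the remaining $H_{k-1}$ bookkeeping reduces via $\sum_{v=j+1}^{m+1}(-1)^v\binom{m+1}{v} = -(-1)^j\binom{m}{j}$ to the classical harmonic-binomial identity $\sum_{k=1}^{m}\tfrac{(-1)^{k+1}}{k}\binom{m}{k} = H_m$, which then closes the argument.
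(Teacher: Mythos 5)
Your plan is correct, but it takes a genuinely different route from the paper. The paper's proof is a one-line reduction: it sums the coefficients of $w$ in \eqref{for:T1}--\eqref{for:T4} and then substitutes the closed form of $R_m(t)$ from Proposition~\ref{prop:Pa}, whose own proof occupies Subsection~\ref{ssec:series} and runs through the integral $\Phi_m(t,z)=\int_0^t\xi^m\psi(\xi+z)\,d\xi$, the general integration formula \eqref{for:general-int_mGd}, and the Barnes multiple Bernoulli polynomials. You bypass Proposition~\ref{prop:Pa} entirely and verify the target identity directly, expanding $\log\mG_k(t+\frac{1}{2})$ via the binomial expansion of $\z(w,t+\frac{1}{2})$ in its second argument and $\log\baC_{m+1}(t)$ via the partial-fraction series for $\tan$, then matching powers of $t$. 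I checked your three regimes for $A_{k,\ell}$, the identity $p!\sum_{k=1}^{m+1}(-1)^{k-1}\binom{m}{k-1}\frac{(k-1)!}{(k+p)!}=\frac{1}{m+p+1}$ behind items (4)--(5), and the identity $\sum_{k=1}^{m+1}(-1)^k\binom{m+1}{k}H_{k-1}=H(m)$ behind item (3); all are correct, and $\sum_v(-1)^v\binom{2j}{v}=0$ does kill the $\log\mG_{k-\ell}(\frac{1}{2})$ terms in item (2). What your route buys is self-containedness (only the Hurwitz--Taylor expansion and elementary binomial identities, no multiple Bernoulli polynomials); what it costs is that it is a coefficient-by-coefficient verification rather than a structural derivation, and read backwards it amounts to an independent proof of Proposition~\ref{prop:Pa}. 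Two small repairs are needed. First, your enumeration omits the powers $t^{m+1-n}$ with $n$ odd, $1\le n\le m$, which do receive regime-(i) contributions; these vanish, but only because $\z(1-n,\frac{1}{2})=-B_n(\frac{1}{2})/n=0$ for odd $n$ (the accompanying $\log\mG_n(\frac{1}{2})$ terms again collapse by the alternating binomial sum), so the oddness of $m$ is genuinely used here and the step should be recorded. Second, in item (3) the $\psi(\frac{1}{2})$ produced by regime (ii) matches the $\psi(\frac{1}{2})$ already present in $T_3'$ directly, while it is the $H_{k-1}$ sum that cancels the $-\frac{2H(m)}{m+1}$ part of the elementary correction; your phrasing inverts this bookkeeping, though the identities you invoke are the right ones and the totals agree.
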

\begin{proof}
 This follows from the identity
 $\frac{\p}{\p w}J_{m,2}(w,t)\bigl|_{w=0}=\sum^{4}_{j=1}\frac{\p}{\p w}T_{j}(w,t)\bigl|_{w=0}$
 together with \eqref{for:T1}, \eqref{for:T2}, \eqref{for:T3}, \eqref{for:T4} and 
 the following proposition, which will be proved in Subsection~\ref{ssec:series}.
\end{proof}

\begin{prop}
\label{prop:Pa}
 Let $m\ge 1$ be an odd integer. Then, we have 
\begin{align}
\label{for:Rm}
 R_m(t)
&=\sum^{m+1}_{k=1}(-1)^{k}\binom{m}{k-1}t^{m+1-k}
\log{\mG_{k}\bigl(t+\frac{1}{2}\bigr)}
-\frac{1}{2}\log{\baC_{m+1}(t)}\\
&\ \ \ 
-\frac{1}{m+1}\Bigl(H(m)-\psi\bigl(\frac{1}{2}\bigr)\Bigr)t^{m+1}
+\frac{1}{2}\sum^{\frac{m-1}{2}}_{j=1}
\frac{B_{m+1-2j}(\frac{1}{2})}{j(m+1-2j)}t^{2j}
-\log{\mG_{m+1}\bigl(\frac{1}{2}\bigr)}.\nonumber
\end{align}
\end{prop}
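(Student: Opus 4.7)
The plan is to verify \eqref{for:Rm} by showing that both sides have the same derivative in $t$ and agree at $t=0$.

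\textit{Derivative of the LHS.} Term-by-term differentiation gives $R_m'(t) = t^m\sum_{j\ge 1}\z(2j+1,\frac{1}{2})t^{2j}$. Starting from the Taylor expansion $\psi(\frac{1}{2}+u) - \psi(\frac{1}{2}) = \sum_{k\ge 1}(-1)^{k+1}\z(k+1,\frac{1}{2})u^k$, replacing $u$ by $\pm u$ and adding yields $\sum_{j\ge 1}\z(2j+1,\frac{1}{2})u^{2j} = -\frac{1}{2}[\psi(\frac{1}{2}+u)+\psi(\frac{1}{2}-u)-2\psi(\frac{1}{2})]$. Applying the reflection $\psi(\frac{1}{2}-t) = \psi(\frac{1}{2}+t) - \pi\tan(\pi t)$ then gives
\[
 R_m'(t) = -t^m\psi(t+\tfrac{1}{2}) + \tfrac{\pi}{2}t^m\tan(\pi t) + t^m\psi(\tfrac{1}{2}).
\]

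\textit{Derivative of the RHS.} From the definition of $\mG_r$ and the identity $\p_z\z(w,z) = -w\z(w+1,z)$, one derives the ladder relation
\[
 \frac{d}{dz}\log\mG_r(z) = \frac{B_{r-1}(z)}{r-1} + (r-1)\log\mG_{r-1}(z) \qquad (r\ge 2),
\]
with base case $(\log\mG_1)'(z) = \psi(z)$. Differentiating the main sum $\sum_{k=1}^{m+1}(-1)^k\binom{m}{k-1}t^{m+1-k}\log\mG_k(t+\frac{1}{2})$ and inserting this recursion produces two contributions at each index $k\in\{1,\ldots,m\}$: one with coefficient $(-1)^k\binom{m}{k-1}(m+1-k)$ (from direct differentiation of $t^{m+1-k}$) and one with coefficient $(-1)^{k+1}\binom{m}{k}k$ (from the $(r-1)\log\mG_{r-1}$ term in the recursion applied at index $k+1$). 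Both coefficients equal $\frac{m!}{(k-1)!(m-k)!}$ in absolute value with opposite signs, hence they cancel pairwise. What remains from the main sum is the direct $k=1$ contribution $-t^m\psi(t+\frac{1}{2})$ together with a Bernoulli sum $\sum_{j=1}^m\frac{(-1)^{j+1}}{j}\binom{m}{j}t^{m-j}B_j(t+\frac{1}{2})$. The derivative of $-\frac{1}{2}\log\baC_{m+1}(t)$ produces $\frac{\pi}{2}t^m\tan(\pi t)$ via \eqref{for:int_multtrig}, matching $R_m'(t)$.

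\textit{Reduction to a polynomial identity.} After cancelling common terms, equality of derivatives reduces to
\[
 \sum_{j=1}^m \frac{(-1)^{j+1}}{j}\binom{m}{j}t^{m-j}B_j(t+\tfrac{1}{2}) = H(m)t^m - \sum_{j=1}^{(m-1)/2}\frac{B_{m+1-2j}(\frac{1}{2})}{m+1-2j}t^{2j-1}.
\]
To prove this, expand $B_j(t+\frac{1}{2}) = \sum_i \binom{j}{i}B_{j-i}(\frac{1}{2})t^i$, interchange the order of summation, and evaluate the coefficient of each $t^\ell$ via the beta-integral identity $\sum_{p=0}^\ell \binom{\ell}{p}\frac{(-1)^p}{p+a} = \frac{\ell!}{a(a+1)\cdots(a+\ell)}$. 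The top coefficient ($\ell=m$) simplifies via $\sum_{p=1}^m\frac{(-1)^{p+1}}{p}\binom{m}{p} = H(m)$, while for $\ell<m$ the coefficient reduces to $\frac{(-1)^{m-\ell+1}}{m-\ell}B_{m-\ell}(\frac{1}{2})$. Since $m$ is odd, even $\ell$ forces $m-\ell$ odd and $B_{m-\ell}(\frac{1}{2}) = 0$, whereas odd $\ell$ matches the second sum on the right by setting $2j-1 = \ell$.

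\textit{Initial value.} As $t\to 0$, the LHS of \eqref{for:Rm} vanishes since $R_m(t) = O(t^{m+3})$. On the RHS, all terms but the $k=m+1$ summand of the main sum contain a positive power of $t$, and the $k=m+1$ term contributes $(-1)^{m+1}\log\mG_{m+1}(\frac{1}{2}) = \log\mG_{m+1}(\frac{1}{2})$ (using that $m$ is odd), which is cancelled by the final $-\log\mG_{m+1}(\frac{1}{2})$. Hence both sides vanish at $t=0$, completing the proof. The main obstacle is the polynomial identity in Step 3: the precise indexing of the restricted sum $j\le(m-1)/2$ on the right emerges only after tracking the parity of $m-\ell$ and exploiting $B_n(\frac{1}{2}) = 0$ for odd $n$.
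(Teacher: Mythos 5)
Your proof is correct, and it takes a genuinely different route from the paper's. Both arguments share the opening move: the expansion $\sum_{j\ge 1}\z(2j+1,\tfrac12)u^{2j}=-\tfrac12\bigl(\psi(\tfrac12+u)+\psi(\tfrac12-u)-2\psi(\tfrac12)\bigr)$ combined with the reflection formula $\psi(\tfrac12-t)=\psi(\tfrac12+t)-\pi\tan(\pi t)$, which is exactly how the paper arrives at \eqref{for:Rm-middle} and produces the $\log\baC_{m+1}(t)$ term via \eqref{for:int_multtrig}. After that the strategies diverge. The paper proceeds constructively: it computes the integral $\Phi_m(t,z)=\int_0^t\xi^m\psi(\xi+z)d\xi$ in closed form (Corollary~\ref{cor:Phi}) as a special case of a general formula \eqref{for:general-int_mGd} for $\int_0^t\xi^m\log\G_{n,r}(\xi+z)d\xi$, obtained by integrating $\xi^m(\xi+\a)^{-w}$ by parts at the level of the Barnes zeta function and differentiating at $w=1-r$; the combinatorial core there is the evaluation of $d_l(m)=\sum_k\binom{l}{k}(-1)^kH(m-k)$ by generating functions. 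You instead verify the stated antiderivative by differentiation: the down-ladder $\frac{d}{dz}\log\mG_r(z)=\frac{B_{r-1}(z)}{r-1}+(r-1)\log\mG_{r-1}(z)$ makes the $\log\mG_k$ terms telescope (the coefficients $\binom{m}{k-1}(m+1-k)$ and $k\binom{m}{k}$ both equal $\frac{m!}{(k-1)!(m-k)!}$, as you note), reducing everything to a Bernoulli-polynomial identity whose coefficients you evaluate with the beta integral $\sum_{p=0}^{L}\binom{L}{p}\frac{(-1)^p}{p+a}=\frac{L!\,(a-1)!}{(L+a)!}$, together with the matching of constants at $t=0$. Your route is more self-contained for the specific value $z=\tfrac12$ and avoids the auxiliary polynomial $P_{n,r}(t,z;m)$ entirely; the paper's route is heavier but yields the more general identity \eqref{for:explicit-Phi} for arbitrary $z$, which is of independent interest. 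One small point worth making explicit in a final write-up: the parity argument ($B_n(\tfrac12)=0$ for odd $n$, so only odd powers $t^{2j-1}$ survive among the lower-order coefficients $\frac{(-1)^{m-\ell+1}}{m-\ell}B_{m-\ell}(\tfrac12)$) is exactly where the hypothesis that $m$ is odd enters, just as in the paper's last step.
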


 From the equation \eqref{for:Jn_middle},
 noting that $\cos(\pi w)^{-1}=1+O(w^2)$ as $w\to 0$,
 we have
\[
 \frac{\p}{\p w}J_{m}(w,t)\Bigl|_{w=0}
=i^{m+1}\biggl(\frac{\p}{\p w}J_{m,1}(w,t)\Bigr|_{w=0}+\frac{\p}{\p w}J_{m,2}(w,t)\Bigl|_{w=0}\biggr).
\] 
 Therefore, substituting \eqref{for:der_J1} and \eqref{for:der_J2} into this equation,
 one eventually obtains the derivative of the moment function $J_{m}(w,t)$ at $w=0$.

\begin{prop}
 It holds that 
\begin{align}
\label{for:der_Jm_middle}
 \frac{\p}{\p w}J_{m}(w,t)\Bigl|_{w=0}
&=\frac{i^{m+1}}{m+1}\Bigl(H\bigl(\frac{m-1}{2}\bigr)-2H(m)\Bigr)t^{m+1}\\
&\ \ \ 
 +2i^{m+1}\sum^{m+1}_{k=1}(-1)^k\binom{m}{k-1}t^{m+1-k}\log{\mG_{k}\bigl(t+\frac{1}{2}\bigr)}.\nonumber
\end{align}
\qed
\end{prop}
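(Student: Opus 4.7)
The plan is to proceed by direct substitution into the identity \eqref{for:Jn_middle}, which we already have at our disposal; the content of the proposition is simply the combination of the two derivative formulas \eqref{for:der_J1} and \eqref{for:der_J2} computed in the two preceding propositions.

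First I would observe that since $\cos(\pi w)^{-1} = 1 + O(w^2)$ as $w \to 0$, multiplying any holomorphic function $F(w)$ by $\cos(\pi w)^{-1}$ does not affect the value of $F'(0)$ (the $O(w^2)$ correction times a bounded function contributes only $O(w^2)$ to the product). Applying this to \eqref{for:Jn_middle} written as
\[
 J_{m}(w,t)=\frac{i^{m+1}}{\cos(\pi w)}\Bigl(J_{m,1}(w,t)+J_{m,2}(w,t)\Bigr),
\]
yields the clean identity
\[
 \frac{\p}{\p w}J_{m}(w,t)\Bigl|_{w=0}
 = i^{m+1}\Bigl(\tfrac{\p}{\p w}J_{m,1}(w,t)\bigl|_{w=0}+\tfrac{\p}{\p w}J_{m,2}(w,t)\bigl|_{w=0}\Bigr).
\]

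Next I would substitute \eqref{for:der_J1} and \eqref{for:der_J2} into this sum. The key algebraic observation is that the term $+\log \baC_{m+1}(t)$ produced by \eqref{for:der_J1} and the term $-\log \baC_{m+1}(t)$ appearing in \eqref{for:der_J2} cancel exactly. What remains on the right is the sum $2\sum_{k=1}^{m+1}(-1)^{k}\binom{m}{k-1}t^{m+1-k}\log \mG_{k}(t+\tfrac{1}{2})$ coming from \eqref{for:der_J2}, together with the rational term $\frac{1}{m+1}(H(\frac{m-1}{2})-2H(m))t^{m+1}$. Multiplying through by $i^{m+1}$ reproduces exactly the claimed expression \eqref{for:der_Jm_middle}.

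Consequently there is essentially nothing to prove beyond bookkeeping, and I would expect no real obstacle in this step: the genuinely substantive work has already been absorbed into \eqref{for:der_J1} (a direct integration yielding a basic multiple cosine via \eqref{for:int_multtrig}) and, more importantly, into \eqref{for:der_J2}, whose proof depends crucially on Proposition~\ref{prop:Pa} expressing $R_m(t)$ in terms of Milnor gamma values. The only point worth double-checking is that the constant (i.e.\ $w^{0}$) contributions in the two derivative formulas are arranged so that their $w$-coefficients combine correctly; this is transparent since $J_{m,1}$ has no constant term in $w$ at all (from the $\sin(\pi w)$ factor in \eqref{for:Jm1}) and the constant term of $J_{m,2}$ arising from \eqref{for:T1} and \eqref{for:T3} is irrelevant to the first derivative computation.
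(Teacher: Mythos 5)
Your proposal is correct and follows exactly the paper's own argument: the paper likewise notes that $\cos(\pi w)^{-1}=1+O(w^2)$ reduces the derivative of \eqref{for:Jn_middle} at $w=0$ to $i^{m+1}$ times the sum of the derivatives of $J_{m,1}$ and $J_{m,2}$, then substitutes \eqref{for:der_J1} and \eqref{for:der_J2}, with the $\log\baC_{m+1}(t)$ terms cancelling. Nothing to add.
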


\subsection{Explicit expressions of $\phi_{r}(s)$}

 We obtain the following expression of $\phi_{r}(s)$
 by the Milnor gamma functions.

\begin{prop}
\label{prop:gammafactor}
 We have 
\begin{align}
\label{for:gammafactor}
 \phi_{r}(s)
&=e^{-\frac{(2r)!!}{r^2(2r-1)!!}(s-\frac{1}{2})^{2r}}
\prod^{2r}_{k=r}\mG_k(s)^{\binom{r}{k-r}\frac{2k}{r}(-1)^{k+r-1}(2s-1)^{2r-k}}.
\end{align}
\end{prop}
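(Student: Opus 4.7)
The strategy is direct: take the logarithm of \eqref{def:gammafactor} and substitute the explicit derivative formula \eqref{for:der_Jm_middle} with $m=2\ell+1$, noting that $i^{m+1} = (-1)^{\ell+1}$. Writing $t = s-\frac{1}{2}$, this expresses $\log\phi_r(s)$ as a double sum that naturally splits into a polynomial-in-$t$ term of degree $2r$ and a sum of multiples of $\log\mG_k(s)$. After this algebra, the whole proposition reduces to two purely combinatorial identities, one for the logarithmic-gamma coefficients and one for the polynomial term.

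For the log-gamma part, after swapping the order of summation, the coefficient of $\log\mG_k(s)$ becomes
$$4(-1)^k t^{2r-k}\sum_{\ell}(-1)^\ell\binom{r-1}{\ell}\binom{2\ell+1}{k-1}.$$
I would compute the inner sum using the generating function
$$\sum_{\ell=0}^{r-1}(-1)^\ell\binom{r-1}{\ell}(1+x)^{2\ell+1} = (1+x)\bigl(1-(1+x)^2\bigr)^{r-1} = (-1)^{r-1}(1+x)x^{r-1}(2+x)^{r-1}.$$
The factor $x^{r-1}$ makes the coefficient of $x^{k-1}$ vanish for $1\le k\le r-1$, and for $k\ge r$ one reads off
$(-1)^{r-1}\bigl(2^{2r-k-1}\binom{r-1}{k-r} + 2^{2r-k}\binom{r-1}{k-r-1}\bigr)$. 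A one-line check via Pascal's rule gives $\binom{r-1}{k-r} + 2\binom{r-1}{k-r-1} = \frac{k}{r}\binom{r}{k-r}$, which after substitution produces exactly the claimed exponent $(-1)^{k+r-1}\binom{r}{k-r}\frac{2k}{r}(2s-1)^{2r-k}$ of $\log\mG_k(s)$.

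For the polynomial part, the coefficient of $t^{2r}$ in $\log\phi_r(s)$ equals
$$\sum_{\ell=0}^{r-1}\frac{(-1)^\ell}{\ell+1}\binom{r-1}{\ell}\bigl(H(\ell) - 2H(2\ell+1)\bigr),$$
and the task is to show this equals $-\frac{(2r)!!}{r^2(2r-1)!!}$. The clean route is to rewrite $\frac{1}{\ell+1}\binom{r-1}{\ell} = \frac{1}{r}\binom{r}{\ell+1}$, then represent the harmonic numbers by $H(m) = \int_0^1\frac{1-u^m}{1-u}du$ and interchange the sum and the integral; the integrand becomes a polynomial related to the same generating function as above, and the resulting one-dimensional integral evaluates via a beta-type computation. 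Using $\frac{(2r)!!}{(2r-1)!!} = \frac{4^r}{\binom{2r}{r}}$ then identifies the answer as the claimed central-binomial expression.

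The principal obstacle is the harmonic-sum evaluation: the log-gamma combinatorics is essentially immediate from the generating function, but extracting $-\frac{(2r)!!}{r^2(2r-1)!!}$ from the harmonic-number sum requires either the beta-integral manoeuvre sketched above or an induction on $r$ with a nontrivial recurrence. A cross-check on $r=1,2$ confirms both sides and guides the evaluation.
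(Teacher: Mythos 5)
Your proposal is correct and follows essentially the same route as the paper: substitute \eqref{for:der_Jm_middle} into \eqref{def:gammafactor}, swap the order of the products, and evaluate the two resulting combinatorial sums, where your coefficient extraction from $(1+x)\bigl(1-(1+x)^2\bigr)^{r-1}=(-1)^{r-1}(1+x)x^{r-1}(2+x)^{r-1}$ is exactly the paper's Lemma~\ref{lem:Drk} (there phrased as the $(k-1)$-st derivative of $t(1-t^2)^{r-1}$ at $t=1$ computed by the Leibniz rule), including the vanishing of the exponent for $1\le k\le r-1$. For the constant term the paper instead uses $H(\ell)-2H(2\ell+1)=-2\sum_{k=0}^{\ell}\frac{1}{2k+1}$ and a swap of sums (Lemma~\ref{lem:Cr}), but your integral-representation manoeuvre does close: the harmonic contributions cancel and one is left with $-\frac{2}{r}\int_0^1(1-u^2)^{r-1}\,du$, precisely the beta integral the paper evaluates to $\frac{(2r)!!}{2r(2r-1)!!}$.
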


 To prove this,
 we need the following lemmas about sums of the binomial coefficients.

\begin{lem}
\label{lem:Cr}
 Let $r\in\bN$. Then, the following equality holds; 
\begin{align}
\label{for:Cr}
 C_r:
&=\sum^{r-1}_{\ell=0}\binom{r-1}{\ell}\frac{(-1)^{\ell}}{\ell+1}
\bigl(H(\ell)-2H(2\ell+1)\bigr)=-\frac{(2r)!!}{r^2(2r-1)!!}.
\end{align}
\end{lem}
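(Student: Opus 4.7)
The plan is to convert the sum $C_r$ into a single integral via the standard representation $H(n)=\int_0^1\frac{1-x^n}{1-x}dx$ (with the convention $H(0)=0$), then evaluate. Substituting $H(\ell)-2H(2\ell+1)=\int_0^1\frac{-1-x^\ell+2x^{2\ell+1}}{1-x}dx$ into the definition of $C_r$ and interchanging the finite sum with the integral leads to an inner sum, which I would evaluate using the elementary identity
\[
\sum_{\ell=0}^{r-1}\binom{r-1}{\ell}\frac{(-1)^\ell y^\ell}{\ell+1}=\frac{1-(1-y)^r}{ry},
\]
obtained by integrating $(1-y)^{r-1}$ from $0$ to $y$. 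Applying this at $y=1,\,x,\,x^2$, one finds that the resulting numerator contains the common factor $1-x$, which cancels the $1/(1-x)$. Using the factorization $(1-x)^{r-1}(1+x)^r=(1-x^2)^{r-1}(1+x)$, a short rearrangement produces
\[
rC_r=\int_0^1\frac{1+(1-x)^{r-1}-2(1-x^2)^{r-1}}{x}dx-2\int_0^1(1-x^2)^{r-1}dx.
\]

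The key observation is that the first integral vanishes. Expanding $(1-x)^{r-1}$ and $(1-x^2)^{r-1}$ by the binomial theorem, the $j=0$ contributions combine with the outside $1$ to yield $1+1-2=0$; dividing the remaining sum by $x$ and integrating term by term, the contributions from the two series agree, because the coefficient $2$ in front of $(1-x^2)^{r-1}$ is exactly absorbed by the ratio between $\int_0^1 x^{j-1}dx=1/j$ and $\int_0^1 x^{2j-1}dx=1/(2j)$. This cancellation is the one nontrivial step; everything else is bookkeeping.

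Finally, the remaining integral is the classical Wallis/beta integral $\int_0^1(1-x^2)^{r-1}dx=\frac{(2r-2)!!}{(2r-1)!!}=\frac{(2r)!!}{2r(2r-1)!!}$, giving $rC_r=-\frac{(2r)!!}{r(2r-1)!!}$, which is the claimed formula. No serious obstacle is anticipated: the interchange of sum and integral is legitimate because the sum is finite, and the remaining manipulations are elementary.
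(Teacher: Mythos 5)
Your proof is correct; I checked the key steps (the removable singularity of the integrand at $x=1$, the vanishing of $\int_0^1\bigl(1+(1-x)^{r-1}-2(1-x^2)^{r-1}\bigr)\frac{dx}{x}$ via the term-by-term cancellation $\frac{1}{j}-\frac{2}{2j}=0$, and the final value $rC_r=-\frac{(2r)!!}{r(2r-1)!!}$, which agrees with $C_1=-2$ and $C_2=-\frac{2}{3}$). Your route differs from the paper's in its mechanism, though both terminate at the same Wallis/beta integral $\int_0^1(1-x^2)^{r-1}dx=\frac{(2r)!!}{2r(2r-1)!!}$. The paper instead writes $H(\ell)-2H(2\ell+1)=-2\sum_{k=0}^{\ell}\frac{1}{2k+1}$, interchanges the two finite sums, and collapses the inner sum with the identity $\sum_{\ell=k}^{r-1}\binom{r-1}{\ell}\frac{(-1)^{\ell}}{\ell+1}=\frac{(-1)^k}{r}\binom{r-1}{k}$ (proved by induction on $k$), reducing $C_r$ directly to $-\frac{2}{r}\sum_{k=0}^{r-1}\binom{r-1}{k}\frac{(-1)^k}{2k+1}$, which is the beta integral. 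Your version trades that double-sum interchange and induction for the integral representation $H(n)=\int_0^1\frac{1-x^n}{1-x}\,dx$ and a generating-function identity obtained by integrating $(1-y)^{r-1}$; the payoff is that all cancellations become transparent polynomial algebra inside a single integral, at the modest cost of verifying integrability at $x=1$ and carrying the extra "vanishing integral" term that the paper's arrangement never produces. Both arguments are elementary and of comparable length.
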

\begin{proof}
 Since $H(\ell)-2H(2\ell+1)=-2\sum^{\ell}_{k=0}\frac{1}{2k+1}$
 (we understand that $H(\ell)=0$ if $\ell\le 0$),
 changing the order of the summations, we have
\begin{equation}
\label{for:midC}
 C_r
=-2\sum^{r-1}_{k=0}\frac{1}{2k+1}
\sum^{r-1}_{\ell=k}\binom{r-1}{\ell}\frac{(-1)^{\ell}}{{\ell}+1}
=-\frac{2}{r}\sum^{r-1}_{k=0}\binom{r-1}{k}\frac{(-1)^k}{2k+1}.
\end{equation}
 Here, we have used the formula 
 $\sum^{r-1}_{\ell=k}\binom{r-1}{\ell}\frac{(-1)^{\ell}}{\ell+1}=\frac{(-1)^k}{r}\binom{r-1}{k}$,
 which is easily obtained by induction on $k$.
 Therefore, since the sum on the rightmost-hand side of \eqref{for:midC} is equal to 
 the beta integral
\[
 \int^{1}_{0}(1-x^2)^{r-1}dx
=\frac{1}{2}B(\frac{1}{2},r)=\frac{\G(\frac{1}{2})\G(r)}{2\G(r+\frac{1}{2})}
=\frac{(2r)!!}{2r(2r-1)!!},
\]
 one obtains the desired formula.
\end{proof}

\begin{lem}
\label{lem:Drk}
 $(\mathrm{i})$\ Let $r\in\bN$ and $1\le k\le 2r$. Then, the following equality holds; 
\begin{align}
\label{for:Drk}
 D_r(k):
=\sum^{r-1}_{\ell=\Gauss{\frac{k-1}{2}}}4(-1)^{\ell+k}\binom{r-1}{\ell}\binom{2\ell+1}{k-1}
=\binom{r}{k-r}\frac{2k}{r}(-1)^{k+r-1}2^{2r-k}.
\end{align}
 In particular, $D_r(k)=0$ if $1\le k\le r-1$.

 $(\mathrm{ii})$\ Let $1\le p\le 2r$. Then, the following equality holds; 
\begin{equation}
\label{for:DDD}
 \wt{D}_{r,p}:=
 \sum^{2r}_{k=p}\binom{k-1}{k-p}D_r(k)
=
\begin{cases}
 0 & (p:\textrm{odd}),\\[3pt]
 \DS{4\binom{r-1}{\frac{p}{2}-1}(-1)^{\frac{p}{2}-1}} & (p:\textrm{even}).
\end{cases} 
\end{equation}
\end{lem}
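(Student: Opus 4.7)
\bigskip
\noindent\textbf{Proof proposal.} Both parts are purely combinatorial identities among binomial coefficients, and neither requires any analytic input. My plan is to treat (i) by a short generating-function argument and to treat (ii) by substituting the \emph{definition} of $D_r(k)$ (rather than its closed form) and swapping the order of summation, so that the inner sum collapses via the vanishing of an alternating binomial series.

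For (i), note that since $\binom{2\ell+1}{k-1}=0$ whenever $2\ell+1<k-1$, the lower bound $\ell\ge\lfloor (k-1)/2\rfloor$ is cosmetic and we may take the $\ell$-sum to run from $0$ to $r-1$. Setting
\[
 f(x):=\sum^{r-1}_{\ell=0}(-1)^\ell \binom{r-1}{\ell}(1+x)^{2\ell+1},
\]
the coefficient $[x^{k-1}]f(x)$ equals the inner sum in $D_r(k)$ up to the factor $4(-1)^k$. Pulling out $(1+x)$ and applying the binomial theorem to the remaining geometric-style sum in $(1+x)^2$ yields $f(x)=(-1)^{r-1}(1+x)\,x^{r-1}(2+x)^{r-1}$. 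Expanding $(1+x)(2+x)^{r-1}$ and reading off the coefficient of $x^{k-r}$ gives, for $k\ge r$,
\[
 \sum_{\ell=0}^{r-1}(-1)^{\ell}\binom{r-1}{\ell}\binom{2\ell+1}{k-1}
=(-1)^{r-1}\left[\binom{r-1}{k-r}2^{2r-1-k}+\binom{r-1}{k-r-1}2^{2r-k}\right],
\]
and $0$ otherwise. The identity $D_r(k)=\binom{r}{k-r}\frac{2k}{r}(-1)^{k+r-1}2^{2r-k}$ then reduces, after the substitution $j=k-r$, to the elementary relation $2\binom{r-1}{j}+4\binom{r-1}{j-1}=\frac{2(r+j)}{r}\binom{r}{j}$, which is verified at once using $\binom{r-1}{j}=\frac{r-j}{r}\binom{r}{j}$ and $\binom{r-1}{j-1}=\frac{j}{r}\binom{r}{j}$. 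The vanishing for $1\le k\le r-1$ follows because in that range $x^{k-1}$ does not appear in $x^{r-1}(1+x)(2+x)^{r-1}$.

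For (ii), I would insert the original defining expression of $D_r(k)$ and swap the $k$- and $\ell$-sums to get
\[
 \wt{D}_{r,p}=4\sum_{\ell=0}^{r-1}(-1)^\ell\binom{r-1}{\ell}\sum_{k=p}^{2\ell+2}(-1)^k\binom{k-1}{p-1}\binom{2\ell+1}{k-1}.
\]
Using the absorption identity $\binom{k-1}{p-1}\binom{2\ell+1}{k-1}=\binom{2\ell+1}{p-1}\binom{2\ell+2-p}{k-p}$ and the substitution $i=k-p$, the inner sum becomes
\[
 (-1)^{p-1}\binom{2\ell+1}{p-1}\sum_{i=0}^{2\ell+2-p}(-1)^i\binom{2\ell+2-p}{i},
\]
which equals $0$ whenever $2\ell+2-p>0$ and equals $(-1)^{p-1}$ when $2\ell+2=p$. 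Thus only $\ell=\frac{p-2}{2}$ contributes, forcing $p$ to be even, and then $\wt{D}_{r,p}=4(-1)^{p/2-1}\binom{r-1}{p/2-1}$, as required.

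The argument is essentially bookkeeping, so there is no serious obstacle; the only point requiring care is keeping the range of summation and the signs straight when swapping the $k$- and $\ell$-sums in step~(ii). Once the absorption identity is invoked, the standard vanishing $\sum_i(-1)^i\binom{n}{i}=\delta_{n,0}$ does all of the remaining work.
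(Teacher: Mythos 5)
Your proof is correct and, for part (i), is essentially the paper's own argument in different notation: the paper computes $\frac{4(-1)^k}{(k-1)!}\frac{d^{k-1}}{dt^{k-1}}\bigl(t(1-t^2)^{r-1}\bigr)\big|_{t=1}$ via the Leibniz rule applied to $(1-t)^{r-1}(1+t)^{r-1}$, and your $f(x)$ is exactly $t(1-t^2)^{r-1}$ evaluated at $t=1+x$, so reading off $[x^{k-1}]f(x)$ is the same computation as extracting the $(k-1)$st Taylor coefficient at $t=1$. For part (ii) you diverge slightly: the paper forms the generating function $\sum_{p}\wt{D}_{r,p}x^p=\sum_{k}D_r(k)\,x(1+x)^{k-1}=4x^2(1-x^2)^{r-1}$ and reads off coefficients, whereas you swap the $k$- and $\ell$-sums directly and collapse the inner sum with the absorption identity and $\sum_i(-1)^i\binom{n}{i}=\delta_{n,0}$; both are short elementary arguments, though the paper's version delivers the odd/even case distinction in $p$ automatically from the parity of $4x^2(1-x^2)^{r-1}$. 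One small slip in your part (ii): after substituting $i=k-p$ one has $(-1)^k=(-1)^{i+p}$, so the prefactor of the inner sum is $(-1)^{p}$, not $(-1)^{p-1}$. Since the only surviving term has $p=2\ell+2$ even, the correct prefactor is $+1$ and your final formula $\wt{D}_{r,p}=4(-1)^{p/2-1}\binom{r-1}{p/2-1}$ is the right one --- but carrying your stated $(-1)^{p-1}=-1$ through literally would have flipped the sign, so the intermediate display should be corrected.
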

\begin{proof}
 From the binomial expansion
 $(1-t^2)^{r-1}=\sum^{r-1}_{\ell=0}\binom{r-1}{\ell}t^{2\ell}$,
 one sees that   
\begin{align}
\label{for:Drk-mid}
 D_r(k)
&=\frac{4(-1)^k}{(k-1)!}\frac{d^{k-1}}{dt^{k-1}}\Bigl(t\bigl(1-t^2\bigr)^{r-1}\Bigr)\Bigl|_{t=1}\\
&=\frac{4(-1)^k}{(k-1)!}\Biggl[\frac{d^{k-1}}{dt^{k-1}}\bigl(1-t^2\bigr)^{r-1}\Bigl|_{t=1}+(k-1)\frac{d^{k-2}}{dt^{k-2}}\bigl(1-t^2\bigr)^{r-1}\Bigl|_{t=1}\Biggr].\nonumber
\end{align}
 This shows that $D_r(k)=0$ if $1\le k\le r-1$.
 Now, let $r\le k\le 2r$ and $j\ge r-1$.
 Then, by the Leibniz rule, we have 
\begin{align*}
 \frac{d^{j}}{dt^{j}}\bigl(1-t^2\bigr)^{r-1}\Bigl|_{t=1}
&=\frac{d^{j}}{dt^{j}}\Bigl((1-t)^{r-1}(1+t)^{r-1}\Bigr)\Bigl|_{t=1}\\
&=\sum^{j}_{m=0}\binom{j}{m}\Bigl(\frac{d^{m}}{dt^{m}}(1-t)^{r-1}\Bigr)\Bigl|_{t=1}\cdot
\Bigl(\frac{d^{j-m}}{dt^{j-m}}(1+t)^{r-1}\Bigr)\Bigl|_{t=1}\\
&=j!\binom{r-1}{j-r+1}(-1)^{r-1}2^{2r-2-j}.
\end{align*}
 Hence, from \eqref{for:Drk-mid}, using this formula with $j=k-1$ and $k-2$, 
 one obtains \eqref{for:Drk}.
 Moreover, it is clear that the equation \eqref{for:Drk} is also valid for $1\le k\le r-1$
 because $\binom{r}{k-r}=0$ for such a $k$.
 Hence the claim $(\mathrm{i})$ follows. 
 We next show the claim $(\mathrm{ii})$. 
 Consider the generating function $\sum^{2r}_{p=1}\wt{D}_{r,p}x^p$.
 Changing the order of the summations and using the formula \eqref{for:Drk},
 we see that this is equal to 
\begin{align*}
 \sum^{2r}_{p=1}\sum^{2r}_{k=p}\binom{k-1}{k-p}D_r(k)x^p
&=\sum^{2r}_{k=1}D_r(k)x(1+x)^{k-1}\\
&=4x^2(1-x^2)^{r-1}\\
&=4\sum^{r}_{p=1}\binom{r-1}{p-1}(-1)^{p-1}x^{2p}.
\end{align*}
 Hence the claim follows.
\end{proof}

 We now give the proof of Proposition~\ref{prop:gammafactor}.

\begin{proof}
[Proof of Proposition~\ref{prop:gammafactor}]
 From the equation \eqref{for:der_Jm_middle},
 changing the order of the products, we have 
\begin{align}
\label{for:phiphi}
 \phi_{r}(s)
&=e^{C_rt^{2r}}\prod^{r-1}_{\ell=0}\prod^{2\ell+2}_{k=1}
\mG_{k}\bigl(t+\frac{1}{2}\bigr)^{4(-1)^{\ell+k}\binom{r-1}{\ell}\binom{2\ell+1}{k-1}t^{2r-k}}\\
&=e^{C_rt^{2r}}\prod^{2r}_{k=1}\mG_{k}(s)^{D_r(k)t^{2r-k}}.\nonumber
\end{align}
 Therefore, one immediately obtains the formula from \eqref{for:Cr} and \eqref{for:Drk}.
\end{proof}

 To clarify the analytic properties of $\phi_r(s)$,
 let us rewrite the expression \eqref{for:gammafactor}
 in terms of the Barnes multiple gamma functions.
 The following expression is obtained in \cite{KOW2};  
\begin{equation}
\label{for:reduce-mG}
 \mG_{r}(z)=\prod^{r}_{j=1}\G_{j}(z)^{c_{r,j}(z)},
\end{equation}
 where, for $r\ge 1$ and $j\ge 1$, 
 $c_{r,j}(z)$ is the polynomial in $z$ defined by 
\begin{align}
\label{for:crl}
 c_{r,j}(z)
&:=\sum^{j-1}_{l=0}\binom{j-1}{l}(-1)^{l}(z-l-1)^{r-1}.
\end{align}
 For example,
 $c_{r,r}(z)=(r-1)!$, $c_{r,r-1}(z)=\frac{1}{2}(2z-r)(r-1)!$,
 \ldots and $c_{r,1}(z)=(z-1)^{r-1}$.
 It is easy to see that the polynomial $c_{r,j}(z)$ satisfies the recursion formula
\[
 c_{r,j}(z)=(z-1)c_{r-1,j}(z)+(j-1)c_{r-1,j-1}(z-1).
\]
 From this, noting that $c_{1,j}(z)=(1-1)^{j-1}=0$ for $j\ge 2$,
 we have $c_{r,j}(z)=0$ for $1\le r\le j-1$.
 Notice that $c_{r,j}(z)$ is also given by the generating function
\begin{equation}
\label{for:genec}
 (T+z)^{r-1}=\sum^{r}_{j=1}c_{r,j}(z)\binom{T+j-1}{j-1}. 
\end{equation}

 For $x\in\bR$, let us denote $\Gauss{x}$ by the largest integer not exceeding $x$. 
 Then, we have the following expression of $\phi_r(s)$
 in terms of the Barnes multiple gamma functions.

\begin{thm}
\label{thm:gamma factor1}
 We have 
\begin{align}
\label{for:gammafactor1}
 \phi_{r}(s)
&=e^{-\frac{(2r)!!}{r^2(2r-1)!!}(s-\frac{1}{2})^{2r}}
\prod^{2r}_{j=1}\G_j(s)^{\a_{r,j}(s-\frac{1}{2})},
\end{align}
 where $\a_{r,j}(t)$ is the even polynomial defined by 
\begin{align}
\label{def:alpha}
 \a_{r,j}(t):
&=4\sum^{r}_{\ell=\Gauss{\frac{j+1}{2}}}
\binom{r-1}{\ell-1}(-1)^{\ell-1}c_{2\ell,j}
\bigl(\frac{1}{2}\bigr)t^{2r-2\ell}.
\end{align}
 This gives 
\begin{itemize}
 \item[$\mathrm{(i)}$] a meromorphic continuation of $\phi_r(s)$ to the whole plane $\bC$ 
 with poles at $s=-k$ of order $2(2k+1)$ for $k\in\bN_{0}$  when $r=1$.\ \\[-20pt]
 \item[$\mathrm{(ii)}$] an analytic continuation to the region
 $\bC\setminus \bigl((-\infty,-1]\cup [0,-i\infty)\bigr)$ when $r\ge 2$.
\end{itemize}
\end{thm}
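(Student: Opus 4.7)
The plan is essentially combinatorial, reducing the claim to a polynomial identity via the reduction formula \eqref{for:reduce-mG}. First I would substitute $\mG_k(s)=\prod_{j=1}^{k}\G_j(s)^{c_{k,j}(s)}$ into the formula \eqref{for:phiphi} and interchange the order of the double product, obtaining
\[
\phi_r(s)=e^{C_r t^{2r}}\prod_{j=1}^{2r}\G_j(s)^{E_{r,j}(s,t)},\qquad E_{r,j}(s,t):=\sum_{k=j}^{2r}D_r(k)\,c_{k,j}(s)\,t^{2r-k}.
\]
It then suffices to verify the polynomial identity $E_{r,j}(t+\tfrac12,t)=\a_{r,j}(t)$ and to discuss the analytic continuation of the resulting product.

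For the identity, I would proceed by a generating-function computation. Combining the expansion \eqref{for:genec} with the intermediate identity extracted from the proof of Lemma \ref{lem:Drk}(ii), namely $\sum_{k}D_r(k)y^{k-1}=4(y-1)y^{r-1}(2-y)^{r-1}$, and substituting $y=(T+s)/t$ (so that the overall factor $t^{2r-1}$ cancels cleanly), one finds
\[
\sum_{k=j}^{2r}D_r(k)\,t^{2r-k}(T+s)^{k-1}=4(-1)^{r-1}(T+\tfrac12)\bigl((T+\tfrac12)^{2}-t^{2}\bigr)^{r-1},
\]
after using $(T+s)(T+1-s)=(T+\tfrac12)^{2}-t^{2}$ with $s=t+\tfrac12$. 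Expanding the right-hand side binomially in $t^{2}$ and then re-applying \eqref{for:genec} at $z=\tfrac12$ to each resulting power $(T+\tfrac12)^{2r-1-2\ell}$ yields a double sum in the basis $\bigl\{\binom{T+j-1}{j-1}\bigr\}_{j}$. Comparing coefficients in that basis, and performing the substitution $\ell\mapsto r-\ell$, reproduces \eqref{def:alpha} on the nose; the lower summation limit $\lfloor(j+1)/2\rfloor$ appears automatically since $c_{2\ell,j}(\tfrac12)$ vanishes when $j>2\ell$.

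For the analytic continuation, the case $r=1$ is immediate from direct computation: \eqref{def:alpha} gives $\a_{1,1}(t)=-2$ and $\a_{1,2}(t)=4$, both constants, so $\phi_1(s)=e^{-2t^{2}}\G_1(s)^{-2}\G_2(s)^{4}$ is visibly meromorphic on $\bC$. Tallying the known pole orders of $\G_2^{4}$ (order $4(k+1)$ at $s=-k$) against the zero orders of $\G_1^{-2}$ (order $2$) gives the stated pole order $2(2k+1)$. When $r\ge2$ the exponents $\a_{r,j}(t)$ are non-constant polynomials in $t$, so I interpret $\G_j(s)^{\a_{r,j}(t)}:=\exp\bigl(\a_{r,j}(t)\log\G_j(s)\bigr)$. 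Since $\G_j(s)$ has no zeros and its poles all lie in $\{0,-1,-2,\dots\}\subset(-\infty,-1]\cup[0,-i\infty)$, the logarithm admits a single-valued holomorphic branch on the complement of that cut, which yields the asserted holomorphy on $\bC\setminus\bigl((-\infty,-1]\cup[0,-i\infty)\bigr)$.

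The main obstacle, I expect, lies in the generating-function step: the substitution $y=(T+s)/t$ produces negative powers of $t$ that must cancel exactly, and the signs emerging from the factorisation $u^{r-1}(2t-u)^{r-1}=(-1)^{r-1}(u(u-2t))^{r-1}$ combined with the index shift $\ell\mapsto r-\ell$ must be tracked carefully against the explicit sign $(-1)^{\ell-1}$ appearing in \eqref{def:alpha}.
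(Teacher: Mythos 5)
Your proposal is correct, and it follows the paper's overall skeleton --- substitute the reduction formula \eqref{for:reduce-mG} into \eqref{for:phiphi}, swap the order of the products, and reduce everything to the polynomial identity $\sum_{k=j}^{2r}c_{k,j}(t+\frac12)D_r(k)t^{2r-k}=\a_{r,j}(t)$ --- but you verify that identity by a genuinely different computation. The paper expands $c_{k,j}(t+\frac12)$ via the explicit formula \eqref{for:crl}, applies the binomial theorem, rearranges a triple sum to make $c_{p,j}(\frac12)$ reappear, and then invokes the separately proved closed form \eqref{for:DDD} for $\wt{D}_{r,p}=\sum_{k\ge p}\binom{k-1}{k-p}D_r(k)$. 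You instead stay at the level of generating functions in the auxiliary variable $T$: using \eqref{for:genec} in both directions together with the identity $\sum_k D_r(k)x(1+x)^{k-1}=4x^2(1-x^2)^{r-1}$ (which is exactly the intermediate step in the paper's proof of Lemma~\ref{lem:Drk}(ii)), you evaluate $\sum_k D_r(k)t^{2r-k}(T+s)^{k-1}=4(-1)^{r-1}(T+\frac12)\bigl((T+\frac12)^2-t^2\bigr)^{r-1}$ and read off $\a_{r,j}(t)$ as the coefficient of $\binom{T+j-1}{j-1}$; I checked that the powers of $t$ do cancel exactly and the signs close up to match \eqref{def:alpha} (your index shift is more naturally $\ell=m+1$ than $\ell\mapsto r-\ell$, but that is pure bookkeeping). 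The net effect is that your argument absorbs Lemma~\ref{lem:Drk}(ii) and the coefficient manipulation into one cleaner computation, at the cost of having to justify that $\{\binom{T+j-1}{j-1}\}_j$ is a basis in which coefficients may be compared. Your treatment of the analytic continuation (constancy of $\a_{1,1}=-2$, $\a_{1,2}=4$ and the pole count $4(k+1)-2=2(2k+1)$ for $r=1$; a single-valued branch of $\log\G_j$ on the simply connected cut plane for $r\ge2$) matches the paper's and is in fact spelled out more fully than the paper's one-line remark.
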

\begin{proof}
 From the equations \eqref{for:phiphi} and \eqref{for:reduce-mG}, 
 it suffices to show that 
\begin{equation}
\label{for:suffice}
 \a_{r,j}(t)=\sum^{2r}_{k=j}c_{k,j}\bigl(t+\frac{1}{2}\bigr)D_r(k)t^{2r-k}.
\end{equation}
 Actually, from the equation \eqref{for:crl},
 the righthand-side of \eqref{for:suffice} is rewritten as 
\begin{align*}
&\ \ \ 
 \sum^{2r}_{k=j}\Biggl(\sum^{j-1}_{l=0}\binom{j-1}{l}(-1)^{l}\bigl(t-\frac{1}{2}-l\bigr)^{k-1}\Biggr)D_r(k)t^{2r-k}\\
&=\sum^{j-1}_{l=0}\binom{j-1}{l}(-1)^{l}\Biggl(\sum^{2r}_{k=j}\sum^{k-1}_{m=0}\binom{k-1}{m}\bigl(-\frac{1}{2}-l\bigr)^{(k-m)-1}D_r(k)t^{2r-(k-m)}\Biggr).
\end{align*}
 Putting $k-m=p$ in the inner sums, one sees that this is equal to
\begin{align*}
&\ \ \
 \sum^{2r}_{p=1}\Biggl(\sum^{2r}_{k=\max\{p,j\}}\binom{k-1}{k-p}D_r(k)\Biggr)\Biggl(\sum^{j-1}_{l=0}\binom{j-1}{l}(-1)^{l}\bigl(\frac{1}{2}-l-1\bigr)^{p-1}\Biggr)t^{2r-p}\\
&=\sum^{2r}_{p=1}\Biggl(\sum^{2r}_{k=\max\{p,j\}}\binom{k-1}{k-p}D_r(k)\Biggr)c_{p,j}\bigl(\frac{1}{2}\bigr)t^{2r-p}.
\end{align*}
 Moreover, the summand for $p=1,2,\ldots,j-1$ vanishes because $c_{p,j}(z)=0$,
 whence, consequently, this can be written as 
 $\sum^{2r}_{p=j}\wt{D}_{r,p}c_{p,j}(\frac{1}{2})t^{2r-p}$.
 Therefore, from \eqref{for:DDD}, one obtains the equation \eqref{for:suffice}.
 The rest of the assertion follows from the equations $\a_{1,1}(t)=-2$ and $\a_{1,2}(t)=4$
 and the fact that the point $z=-k$  is a zero of
 $\G_j(z)^{-1}$ of order $\binom{k+j-1}{j-1}$.
 This ends the proof.
\end{proof}

\begin{remark}
\label{rem:constant}
 From the equations $c_{r,r-1}(z)=\frac{1}{2}(2z-r)(r-1)!$ and $c_{r,r}(z)=(r-1)!$,
 one sees that both $\a_{r,2r-1}(t)$ and $\a_{r,2r}(t)$ are integers
 respectively given by 
\begin{align*}
 \a_{r,2r-1}(t)&
=4(-1)^{r-1}c_{2r,2r-1}\bigl(\frac{1}{2}\bigr)=(-1)^r(4r-2)(2r-1)!,\\
 \a_{r,2r}(t)&
=4(-1)^rc_{2r,2r}\bigl(\frac{1}{2}\bigr)=4(-1)^{r-1}(2r-1)!.
\end{align*}
\end{remark}

%
%
%

 We next give an expression of $\phi_{r}(s)$
 via the Vign\'eras multiple gamma function $G_n(z)$,
 which are characterized by a generalization of the Bohr-Mollerup theorem (\cite{Vigneras1979}). 
 Notice that $G_1(z)=\G(z)$ and $G_2(z)=G(z)$
 where $G(z)$ is the Barnes $G$-function studied in \cite{Barnes1899}. 
 From \cite[p.\,$87$ $(27)$]{SrivastavaChoi2001}, 
 we know that $G_n(z)$ is essentially equal to $\G_n(z)$;
\begin{equation}
\label{for:Vigneras-Barnes2}
 G_n(z)
=e^{(-1)^n\sum^{n-1}_{j=0}b_{n,j}(z)\z'(-j)}\cdot \G_n(z)^{(-1)^{n-1}}.
\end{equation}
 Here, $b_{n,j}(z)$ is the polynomial of degree $n-1-j$
 defined by the generating function $\binom{j+n-1}{n-1}=\sum^{n-1}_{k=0}b_{n,k}(z)(j+z)^{k}$.
 To be more precise, let $s(n,m)$ be the Stirling number of the first kind defined by
 $(z)_{n}=\sum^{n}_{m=0}(-1)^{n+m}s(n,m)z^m$
 where $(z)_n:=\frac{\G(z+n)}{\G(z)}=z(z+1)\cdots (z+n-1)$ is the Pochhammer symbol. 
 Then, it is given by 
 $b_{n,k}(z)=\frac{(-1)^{n-1-k}}{(n-1)!}\sum^{n-1}_{m=k}\binom{m}{k}s(n,m+1)z^{m-k}$.
 The following expression is immediately obtained from \eqref{for:gammafactor1}
 together with the identity \eqref{for:Vigneras-Barnes2}.

\begin{cor}
\label{cor:gamma factor2}
 We have 
\begin{align}
\label{for:gammafactor2}
 \phi_{r}(s)
&=e^{-\frac{(2r)!!}{r^2(2r-1)!!}(s-\frac{1}{2})^{2r}+\sum^{2r-1}_{\ell=0}
\b_{r,\ell}(s-\frac{1}{2})\z'(-\ell)}
\prod^{2r}_{j=1}G_j(s)^{(-1)^{j-1}\a_{r,j}(s-\frac{1}{2})}.
\end{align}
 Here, $\b_{r,\ell}(t)$ is the polynomial defined by  
\begin{align*}
 \b_{r,\ell}(t):
&=\sum^{2r}_{j=\ell+1}b_{j,\ell}\bigl(t+\frac{1}{2}\bigr)\a_{r,j}(t).
\end{align*} 
\qed
\end{cor}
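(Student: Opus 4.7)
The plan is to obtain the corollary as a direct computation, substituting the Vignéras–Barnes identity \eqref{for:Vigneras-Barnes2} into the expression \eqref{for:gammafactor1} from \Thm{gamma factor1} and then reorganising the exponential of $\zeta'$-values.

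First I would solve \eqref{for:Vigneras-Barnes2} for $\Gamma_n(z)$. Multiplying both sides by $\Gamma_n(z)^{(-1)^{n-1}}$ and raising to the $(-1)^{n-1}$ power gives
\begin{equation*}
 \G_n(z)=G_n(z)^{(-1)^{n-1}}\exp\Bigl(\sum^{n-1}_{k=0}b_{n,k}(z)\z'(-k)\Bigr),
\end{equation*}
where the sign in the exponent flips because $(-1)^{n}\cdot(-1)^{n-1}=-1$. Raising this to the $\a_{r,j}(s-\tfrac12)$ power with $n=j$ and substituting into \eqref{for:gammafactor1}, I get
\begin{equation*}
 \phi_r(s)=e^{-\frac{(2r)!!}{r^2(2r-1)!!}(s-\frac12)^{2r}}\prod^{2r}_{j=1}G_j(s)^{(-1)^{j-1}\a_{r,j}(s-\frac12)}\cdot\exp\Bigl(\sum^{2r}_{j=1}\a_{r,j}\bigl(s-\tfrac12\bigr)\sum^{j-1}_{k=0}b_{j,k}(s)\z'(-k)\Bigr).
\end{equation*}

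Next I would switch the order of summation in the double sum in the final exponent. Since the condition $0\le k\le j-1$ with $1\le j\le 2r$ is equivalent to $0\le k\le 2r-1$ with $k+1\le j\le 2r$, setting $t:=s-\tfrac12$ yields
\begin{equation*}
 \sum^{2r}_{j=1}\sum^{j-1}_{k=0}\a_{r,j}(t)b_{j,k}\bigl(t+\tfrac12\bigr)\z'(-k)
 =\sum^{2r-1}_{\ell=0}\z'(-\ell)\sum^{2r}_{j=\ell+1}b_{j,\ell}\bigl(t+\tfrac12\bigr)\a_{r,j}(t)
 =\sum^{2r-1}_{\ell=0}\b_{r,\ell}(t)\,\z'(-\ell)
\end{equation*}
by the very definition of $\b_{r,\ell}(t)$. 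Substituting this back gives the claimed identity \eqref{for:gammafactor2}.

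The argument is essentially a mechanical bookkeeping exercise, so there is no real obstacle; the only point requiring care is to keep track of the sign $(-1)^{n-1}$ that appears when inverting \eqref{for:Vigneras-Barnes2}, and to correctly match the index shift in the Fubini-type exchange so that the inner sum reproduces $\b_{r,\ell}(t)$ precisely as defined. All other factors—namely the Gaussian-type exponential $e^{-\frac{(2r)!!}{r^2(2r-1)!!}(s-\frac12)^{2r}}$ and the product $\prod_{j=1}^{2r}G_j(s)^{(-1)^{j-1}\a_{r,j}(s-\frac12)}$—pass through untouched.
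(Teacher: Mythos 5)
Your proposal is correct and follows exactly the route the paper intends (the paper gives no written proof, stating only that the corollary is ``immediately obtained from \eqref{for:gammafactor1} together with the identity \eqref{for:Vigneras-Barnes2}''): inverting \eqref{for:Vigneras-Barnes2} to get $\G_n(z)=G_n(z)^{(-1)^{n-1}}\exp\bigl(\sum^{n-1}_{k=0}b_{n,k}(z)\z'(-k)\bigr)$, substituting into \eqref{for:gammafactor1}, and exchanging the order of summation to recover $\b_{r,\ell}(t)$. The sign bookkeeping and the index exchange are both handled correctly, as one can confirm against the worked examples of $\phi_1$, $\phi_2$, $\phi_3$ in the paper.
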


\begin{example}
 Let $t=s-\frac{1}{2}$.
 Then, from \eqref{for:gammafactor1} and \eqref{for:gammafactor2}, we have  
\begin{align*}
 \phi_{1}(s)
&=e^{-2t^2}\G_1(s)^{-2}\G_2(s)^4\\
&=e^{-2t^2-4t\z'(0)+4\z'(-1)}G_1(s)^{-2}G_2(s)^{-4},\\
 \phi_{2}(s)
&=e^{-\frac{2}{3}t^4}
\G_1(s)^{-2t^2+\frac{1}{2}}\G_2(s)^{4t^2-13}\G_3(s)^{36}\G_4(s)^{-24}\\
&=e^{-\frac{2}{3}t^4-8t^2\z'(-1)+12t\z'(-2)-4\z'(-3)}
G_1(s)^{-2t^2+\frac{1}{2}}G_2(s)^{-4t^2+13}G_3(s)^{36}G_4(s)^{24},\\
 \phi_3(s)
&=e^{-\frac{16}{45}t^6}\G_1(s)^{-2t^4+t^2-\frac{1}{8}}\G_2(s)^{4t^4-26t^2+\frac{121}{4}}\G_3(s)^{72t^2-330}\G_4(s)^{-48t^2+1020}\G_5(s)^{-1200}\G_6(s)^{480}\\
&=e^{-\frac{16}{45}t^6-16t^3\z'(-2)+32t^2\z'(-3)-20t\z'(-4)+4\z'(-5)}\\
&\ \ \ \times G_1(s)^{-2t^4+t^2-\frac{1}{8}}
 G_2(s)^{-4t^4+26t^2-\frac{121}{4}}G_3(s)^{72t^2-330}G_4(s)^{48t^2-1020}G_5(s)^{-1200} G_6(s)^{-480}.
\end{align*}
 Notice that the expression of $\phi(s)=\phi_{1}(s)$ is obtained in \cite{V}.
\end{example}

\subsection{Functional equations of $\phi_{r}(s)$}
\label{ssec:fegamma}

 In this subsection, we establish a functional equation of $\phi_r(s)$.
 To do that, we first recall the normalized multiple sine function $S_n(z)$
 studied in \cite{KK1};
\begin{equation}
\label{def:multsine}
 S_n(z):=\G_n(z)^{-1}\G_n(n-z)^{(-1)^n}.
\end{equation}
 Notice that, from the reflection formula,
 we have $S_1(z)=2\sin(\pi z)=\baS_1(z)$.
 We remark that it is shown in \cite[Theorem~2.14]{KK1}
 that $\baS_n(z)$ can be expressed as a product of
 $S_j(z)$ for $j=1,2,\ldots,n$ and vice versa.

\begin{thm}
\label{thm:fegamma}
 We have
 \begin{equation}
\label{for:fegamma}
 \phi_{r}(1-s)=\Bigl(\prod^{2r}_{k=1}S_k(s)^{\a_{r,k}(s-\frac{1}{2})}\Bigr)\phi_r(s)
\end{equation}
 for all 
\ \\[-20pt]
\begin{itemize}
 \item[$\mathrm{(i)}$] $s\in\bC$ when $r=1$. \ \\[-20pt]
 \item[$\mathrm{(ii)}$] $s\in\bC\setminus\bigl((-\infty,-1]\cup [2,+\infty)\cup [0,-i\infty)\cup[1,+i\infty)\bigr)$ 
 when $r\ge 2$. \ \\[-13pt]
\end{itemize}
\end{thm}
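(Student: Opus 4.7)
The approach is to start from the Barnes multiple-gamma expression \eqref{for:gammafactor1} of $\phi_r(s)$, use the evenness of $\alpha_{r,j}(t)$ in $t = s - \frac{1}{2}$ to cancel the non-gamma factors under $s \mapsto 1-s$, and then apply the defining relation \eqref{def:multsine} of the normalized multiple sine together with the Barnes ladder relation $\G_n(z+1) = \G_n(z)/\G_{n-1}(z)$ to telescope the resulting ratio into the desired product of $S_k(s)^{\alpha_{r,k}(t)}$.

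\textbf{Steps.} First I would observe that every monomial appearing in $\alpha_{r,j}(t)$, as given in \eqref{def:alpha}, carries an even power of $t$, and the exponent $(s-\frac{1}{2})^{2r}$ in the prefactor is trivially even in $t$. Hence, under $s \mapsto 1-s$ the exponential factor of \eqref{for:gammafactor1} is preserved and
\[
\frac{\phi_r(1-s)}{\phi_r(s)} = \prod_{j=1}^{2r}\Bigl(\frac{\G_j(1-s)}{\G_j(s)}\Bigr)^{\alpha_{r,j}(t)}.
\]
Next, the identity $S_k(s)\G_k(s) = \G_k(k-s)^{(-1)^k}$, which is just \eqref{def:multsine}, converts the putative right-hand side of \eqref{for:fegamma} into $\prod_{k}\G_k(k-s)^{(-1)^k\alpha_{r,k}(t)}\prod_k \G_k(s)^{-\alpha_{r,k}(t)}$. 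Thus the functional equation is equivalent to the multi-gamma identity
\[
\prod_{j=1}^{2r}\G_j(1-s)^{\alpha_{r,j}(t)} \;=\; \prod_{k=1}^{2r}\G_k(k-s)^{(-1)^k\alpha_{r,k}(t)}. \qquad (\star)
\]
To establish $(\star)$, I would iterate the ladder relation (which itself comes from $\z_n(w,z+1) = \z_n(w,z) - \z_{n-1}(w,z)$) starting at $z = 1-s$, obtaining
\[
\log\G_k(k-s) = \log\G_k(1-s) - \sum_{i=1}^{k-1}\log\G_{k-1}(i-s),
\]
and then recursively reducing each $\log\G_{k-1}(i-s)$ in the same way, so that eventually every $\log\G_k(k-s)$ is expressed as a polynomial-in-$t$ combination of $\log\G_j(1-s)$ for $j \le k$. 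Substituting into $(\star)$ and collecting the coefficient of each $\log\G_j(1-s)$ reduces the claim to a polynomial identity in $t$.

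\textbf{Closing steps and main obstacle.} The polynomial identity that remains is naturally of the same combinatorial type as \eqref{for:suffice} from the proof of Theorem~\ref{thm:gamma factor1}: both sides are built from $\alpha_{r,k}(t)$ paired with $c_{k,j}(\frac{1}{2})$-style weights produced by iterated finite differences, so I would package the ladder expansion via the generating-function identity \eqref{for:genec} to derive the matching of coefficients from a single polynomial manipulation rather than a case-by-case verification; the case $r=1$ in $(\mathrm{i})$ can alternatively be verified by direct substitution using the classical reflection formulas for $\G(s)$ and $G(s)$. The domain restrictions in $(\mathrm{ii})$ follow by tracking which step of the iterated ladder crosses a branch cut of $\log\G_n$: the conditions $s \notin (-\infty,-1]$, $[2,+\infty)$, $[0,-i\infty)$, $[1,+i\infty)$ are exactly those that keep every intermediate $\G_n(i-s)$ away from its poles and every branch of $\log$ consistent with the one used to define $\phi_r$ on the two components of $U$. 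The main obstacle is the combinatorial bookkeeping in the ladder expansion: verifying that the signs, binomials, and $c_{k,j}$-factors conspire to reproduce $\alpha_{r,j}(t)$ on both sides of $(\star)$ requires careful manipulation, but should come out of the same polynomial algebra already developed around Lemma~\ref{lem:Drk} and Theorem~\ref{thm:gamma factor1}.
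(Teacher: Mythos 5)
Your proposal is correct and follows essentially the same route as the paper: after using the evenness of $\a_{r,j}(t)$ and the $t^{2r}$ exponent, the paper expands $\G_n(1-z)$ in terms of $S_j(z)\G_j(z)=\G_j(j-z)^{(-1)^j}$ via the ladder relations (its equation \eqref{for:G1-n}), which is the inverse of your telescoping of $\G_k(k-s)$ down to the $\G_m(1-s)$, and the coefficient-matching you defer is precisely the paper's identity \eqref{for:alpha-inv}, $\a_{r,k}(t)=(-1)^k\sum_{j\ge k}\binom{j-1}{k-1}\a_{r,j}(t)$, proved there exactly as you suggest via the generating function \eqref{for:genec}. The only cosmetic difference is the order in which the reflection and the binomial transform are applied.
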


 To obtain this, we need the following lemmas.

\begin{lem}
 We have 
\begin{equation}
\label{for:G1-n}
 \G_n(1-z)
=\prod^{n}_{j=1}\bigl(S_j(z)\G_j(z)\bigr)^{(-1)^j\binom{n-1}{j-1}}. 
\end{equation}
\end{lem}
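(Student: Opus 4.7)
The plan is to simplify the right-hand side of \eqref{for:G1-n} using the definition of the multiple sine function, and then reduce the remaining identity to a statement about Barnes multiple zeta functions that collapses to a classical Chu--Vandermonde summation.

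First, from the defining equation \eqref{def:multsine}, one has $S_j(z)\G_j(z) = \G_j(j-z)^{(-1)^j}$, whence $\bigl(S_j(z)\G_j(z)\bigr)^{(-1)^j\binom{n-1}{j-1}} = \G_j(j-z)^{\binom{n-1}{j-1}}$ since $(-1)^{2j}=1$. Consequently, \eqref{for:G1-n} is equivalent to the identity
\[
 \G_n(1-z) = \prod_{j=1}^{n}\G_j(j-z)^{\binom{n-1}{j-1}}. \qquad (\star)
\]

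To prove $(\star)$, I would pass to the Barnes multiple zeta function. Since $\log\G_j(u)=\frac{\p}{\p w}\z_j(w,u)\bigl|_{w=0}$, it suffices to establish
\[
 \z_n(w,1-z) = \sum_{j=1}^{n}\binom{n-1}{j-1}\z_j(w,j-z) \qquad (\star\star)
\]
as meromorphic functions of $w$. Because each $\z_j$ has only possible simple poles at $w=1,2,\ldots,j$, every term in $(\star\star)$ is holomorphic at $w=0$; differentiating there and exponentiating then yields $(\star)$. By analytic continuation it is enough to verify $(\star\star)$ in the common domain of absolute convergence $\Re(w)>n$. Using the Dirichlet expansion $\z_j(w,u)=\sum_{k\ge 0}\binom{k+j-1}{j-1}(k+u)^{-w}$ and reindexing with $\ell=k+1$ on the left (resp.\ $\ell=m+j$ on each right-hand summand), both sides take the form $\sum_{\ell\ge 1}c_\ell(\ell-z)^{-w}$, and $(\star\star)$ is reduced to checking
\[
 \binom{\ell+n-2}{n-1} = \sum_{j=1}^{\min(n,\ell)}\binom{n-1}{j-1}\binom{\ell-1}{j-1}
\]
for each $\ell\ge 1$. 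After setting $i=j-1$, this is precisely the Chu--Vandermonde identity.

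There is no substantive obstacle: the argument rests only on the standard pole structure of the Barnes zeta function (which legitimizes differentiating $(\star\star)$ at $w=0$) and on a classical binomial sum. I would prefer this zeta-theoretic route over iterating the ladder relation $\G_n(u+1)=\G_n(u)\G_{n-1}(u)^{-1}$, which would otherwise force a nested induction on $n$ together with repeated reindexing of the shifted arguments $j-z$.
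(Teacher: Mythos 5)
Your proposal is correct, and it establishes the lemma by a genuinely different route from the paper. Both arguments pass through the same intermediate reformulation: since $S_j(z)\G_j(z)=\G_j(j-z)^{(-1)^j}$ by \eqref{def:multsine}, the claim is equivalent to $\G_n(1-z)=\prod_{j=1}^{n}\G_j(j-z)^{\binom{n-1}{j-1}}$. The paper proves this multiplicatively: it sets $E_n(z):=S_n(z)\G_n(z)$, derives the ladder relation $E_n(z+1)=E_n(z)E_{n-1}(z)^{-1}$ from \eqref{for:laddergamma} and \eqref{for:laddersine}, expands $\G_n(n-z)$ down to $\G_n(1-z)$ by iterating \eqref{for:laddergamma}, and then collapses the resulting double product with the hockey-stick identity $\sum_{m=a}^{b}\binom{m}{a}=\binom{b+1}{b-a}$. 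You instead go additively through the Barnes zeta functions, matching Dirichlet coefficients of $\z_n(w,1-z)$ and $\sum_j\binom{n-1}{j-1}\z_j(w,j-z)$ and reducing to Chu--Vandermonde; the pole structure of $\z_j$ at $w=1,\ldots,j$ justifies differentiating at $w=0$. Your route avoids the nested reindexing of shifted arguments entirely and is arguably more transparent; its only cost is the (routine) bookkeeping you should make explicit, namely that the coefficient comparison is first carried out for $\Re(w)>n$ and for $z$ in a region where all arguments $1-z$ and $j-z$ have positive real part (e.g.\ $\Re(z)<1$, which also guarantees all terms $(\ell-z)^{-w}$ carry the same principal branch), after which the identity of the $w$-derivatives at $w=0$ extends to all relevant $z$ by analytic continuation. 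The paper's ladder-relation proof, by contrast, stays entirely at the level of the gamma and sine functions and needs no return to the zeta-function definition.
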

\begin{proof}
 Let $E_n(z):=S_n(z)\G_n(z)$. 
 Note that, from the ladder relations
\begin{align}
\label{for:laddergamma}
 \G_n(z+1)&=\G_n(z)\G_{n-1}(z)^{-1},\\
\label{for:laddersine}
  S_n(z+1)&=S_n(z)S_{n-1}(z)^{-1},
\end{align}
 we have $E_n(z+1)=E_n(z)E_{n-1}(z)^{-1}$.
 Here, we put $\G_0(z):=z^{-1}$ and $S_0(z):=-1$.
 By the definition \eqref{def:multsine}, we have $E_n(z)^{(-1)^n}=\G_n(n-z)$. 
 Hence, using the relation \eqref{for:laddergamma} repeatedly,
 we have  
\begin{align*}
 E_n(z)^{(-1)^n}=\G_n(n-z)
&=\G_{n}(1-z)\prod^{n-2}_{m=0}\G_{n-1}\bigl(n-1-(z+m)\bigr)^{-1}\\
&=\G_{n}(1-z)\prod^{n-2}_{m=0}E_{n-1}(z+m)^{(-1)^{n}}.
\end{align*}
 Therefore, using the equation
 $E_{n-1}(z+m)=\prod^{n-1}_{j=n-1-m}E_j(z)^{(-1)^{n-1-j}\binom{m}{n-1-j}}$,
 which is obtained from the ladder relation of $E_{n}(z)$
 and the formula  $\sum^{b}_{m=a}\binom{m}{a}=\binom{b+1}{b-a}$,
 one sees that  
\begin{align*}
 \G_{n}(1-z)
&=E_n(z)^{(-1)^n}\Biggl(\prod^{n-2}_{m=0}E_{n-1}(z+m)^{(-1)^{n}}\Biggr)^{-1}\\
&=E_n(z)^{(-1)^n}\Biggl(\prod^{n-2}_{m=0}\prod^{n-1}_{j=n-1-m}E_j(z)^{(-1)^{j+1}\binom{m}{n-1-j}}\Biggr)^{-1}\\
&=E_n(z)^{(-1)^n}\prod^{n-1}_{j=1}E_j(z)^{(-1)^{j}\sum^{n-2}_{m=n-1-j}\binom{m}{n-1-j}}\\
&=\prod^{n}_{j=1}E_j(z)^{(-1)^{j}\binom{n-1}{j-1}}.
\end{align*}
 This shows the claim.
\end{proof}

\begin{lem}
 For $1\le k\le 2r$, we have
\begin{equation}
 \label{for:alpha-inv}
 \a_{r,k}(t)=(-1)^k\sum^{2r}_{j=k}\binom{j-1}{k-1}\a_{r,j}(t).
\end{equation}
\end{lem}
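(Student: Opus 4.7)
The plan is to extract a combinatorial identity for $c_{m,j}(\tfrac{1}{2})$ from the generating function \eqref{for:genec}, and then plug it into the definition \eqref{def:alpha} of $\a_{r,j}(t)$.

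First, I would specialize \eqref{for:genec} to $r=m$, $z=\tfrac{1}{2}$ and expand $\binom{T+j-1}{j-1}$ in the basis $\{\binom{T}{k}\}_{k}$ via Vandermonde's identity to obtain one $\binom{T}{k}$-expansion of $(T+\tfrac{1}{2})^{m-1}$ whose coefficients are $\sum_{j=k+1}^{m}\binom{j-1}{k}c_{m,j}(\tfrac{1}{2})$. Next, substituting $T\mapsto -T-1$ in \eqref{for:genec} and using the standard binomial reflection $\binom{-T+j-2}{j-1}=(-1)^{j-1}\binom{T}{j-1}$ expresses the same polynomial as $(-1)^{m-1}\sum_{j}(-1)^{j-1}c_{m,j}(\tfrac{1}{2})\binom{T}{j-1}$. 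Matching coefficients of $\binom{T}{k}$ in the two expressions (and reindexing $k\mapsto k-1$) gives the key identity
\begin{equation*}
 \sum_{j=k}^{m}\binom{j-1}{k-1}c_{m,j}\bigl(\tfrac{1}{2}\bigr)=(-1)^{m+k}c_{m,k}\bigl(\tfrac{1}{2}\bigr),\qquad 1\le k\le m.
\end{equation*}

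Second, I would insert \eqref{def:alpha} into the right-hand side of \eqref{for:alpha-inv} and swap the two summations. Since the condition $\ell\ge \Gauss{(j+1)/2}$ is equivalent to $j\le 2\ell$, after reversing the order one obtains, for $\ell$ ranging over $\Gauss{(k+1)/2}\le \ell\le r$, the inner sum $\sum_{j=k}^{2\ell}\binom{j-1}{k-1}c_{2\ell,j}(\tfrac{1}{2})$. Applying the key identity with $m=2\ell$ (so that $(-1)^{m+k}=(-1)^{k}$) collapses this inner sum to $(-1)^{k}c_{2\ell,k}(\tfrac{1}{2})$, and the overall prefactor $(-1)^{k}$ cancels this sign. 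The remaining sum over $\ell$ is exactly $\a_{r,k}(t)$ as given in \eqref{def:alpha}.

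The only mildly delicate point is the generating-function step, in particular verifying the binomial reflection $\binom{-T+j-2}{j-1}=(-1)^{j-1}\binom{T}{j-1}$ and ensuring that the summation limits align (handled by the vanishing $c_{m,j}=0$ for $m<j$ noted after \eqref{for:crl}, which lets us extend or truncate the outer sum freely to $j=m$). It is also worth noting that the identity would \emph{fail} if we averaged $c_{m,j}$ at values $z\ne \tfrac{1}{2}$, or if $m$ were odd—so it is essential here that the summation \eqref{def:alpha} runs only over the even index $m=2\ell$. The remainder is routine bookkeeping with binomial coefficients.
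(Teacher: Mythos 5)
Your argument is correct and rests on the same mechanism as the paper's own proof, namely the generating identity \eqref{for:genec} specialized at $z=\frac{1}{2}$ combined with the substitution $T\mapsto -T-1$ and the binomial reflection $\binom{-T+j-2}{j-1}=(-1)^{j-1}\binom{T}{j-1}$; the only difference is organizational, in that you isolate the resulting symmetry as an identity for the coefficients $c_{2\ell,j}(\frac{1}{2})$ and then swap the order of summation in \eqref{def:alpha}, whereas the paper applies the same reflection directly to the assembled generating function $\sum_{k}\a_{r,k}(t)\binom{T+k-1}{k-1}=4t^{2r-2}\bigl(T+\frac{1}{2}\bigr)\bigl(1-(T+\frac{1}{2})^2/t^2\bigr)^{r-1}$ and concludes by linear independence of the $\binom{T+k-1}{k-1}$. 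Both routes exploit the evenness of $m=2\ell$ (equivalently, the odd symmetry of that polynomial about $T=-\frac{1}{2}$) in the same essential way, so your proof is the paper's argument in a slightly more modular form.
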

\begin{proof}
 Write the righthand-side of \eqref{for:alpha-inv} as $\wt{\a}_{r,k}(t)$.
 It suffices to show that
 $ \sum^{2r}_{k=1}\a_{r,k}(t)\binom{T+j-1}{j-1}
=\sum^{2r}_{k=1}\wt{\a}_{r,k}(t)\binom{T+j-1}{j-1}$.
 In fact, from \eqref{def:alpha}, using the formula \eqref{for:genec},
 we have
\[
 \sum^{2r}_{k=1}\a_{r,k}(t)\binom{T+j-1}{j-1}
=4t^{2r-2}\bigl(T+\frac{1}{2}\bigr)\Bigl(1-\frac{(T+\frac{1}{2})^2}{t^2}\Bigr)^{r-1}.
\]
 On the other hand, using the identity
 $\sum^{j}_{k=1}(-1)^k\binom{j-1}{k-1}\binom{T+k-1}{k-1}=-\binom{-T+j-2}{j-1}$,
 we have 
\begin{align*}
 \sum^{2r}_{k=1}\wt{\a}_{r,k}(t)\binom{T+j-1}{j-1}
&=-\sum^{2r}_{j=1}\a_{r,j}(t)\binom{(-T-1)+j-1}{j-1}\\
&=-4t^{2r-2}\bigl(-T-\frac{1}{2}\bigr)\Bigl(1-\frac{(-T-\frac{1}{2})^2}{t^2}\Bigr)^{r-1}\\
&=4t^{2r-2}\bigl(T+\frac{1}{2}\bigr)\Bigl(1-\frac{(T+\frac{1}{2})^2}{t^2}\Bigr)^{r-1}.
\end{align*}
 Hence the claim follows.
\end{proof}

 We now give a proof of Theorem~\ref{thm:fegamma}. 

\begin{proof}
[Proof of Theorem~\ref{thm:fegamma}] 
 We first notice that both functions $t^{2r}$ and $\a_{r,j}(t)$
 are invariant under the transform $s\mapsto 1-s$
 because they are even polynomials in $t$.
 Hence, replacing $s$ with $1-s$ in \eqref{for:gammafactor1}
 and using the formulas \eqref{for:G1-n} and \eqref{for:alpha-inv},
 we have
\begin{align*}
 \phi_{r}(1-s)
&=e^{-\frac{(2r)!!}{r^2(2r-1)!!}t^{2r}}
\prod^{2r}_{j=1}
\Biggl(\prod^{j}_{k=1}\bigl(S_k(s)\G_k(s)\bigr)^{(-1)^k\binom{j-1}{k-1}}\Biggr)^{\a_{r,j}(t)}\\
&=e^{-\frac{(2r)!!}{r^2(2r-1)!!}t^{2r}}
\prod^{2r}_{k=1}\bigl(S_k(s)\G_k(s)\bigr)^{(-1)^k\sum^{2r}_{j=k}\binom{j-1}{k-1}\a_{r,j}(t)}\\
&=\Bigl(\prod^{2r}_{k=1}S_k(s)^{\a_{r,k}(t)}\Bigr)\phi_r(s).
\end{align*}
 This shows the equation \eqref{for:fegamma}.
\end{proof}

\subsection{Proof of Proposition~\ref{prop:Pa}}
\label{ssec:series}

 Let $m\in\bN_{0}$.
 The aim of this subsection is to give a proof of Proposition~\ref{prop:Pa}. 
 Let
\[
 R_m(t,z):
=\sum^{\infty}_{j=1}\frac{\z(2j+1,z)}{2j+m+1}t^{2j+m+1}
\qquad (|t|<|z|).
\]
 Note that $R_m(t)=R_m(t,\frac{1}{2})$.
 We start from the identity (\cite[p.159 (4)]{SrivastavaChoi2001})
\[
 R_0(t,z)
=\sum^{\infty}_{j=1}\frac{\z(2j+1,z)}{2j+1}t^{2j+1}
=\frac{1}{2}
\Bigl(\log{\G(z-t)}-\log{\G(z+t)}\Bigr)
+t\psi(z)
\qquad (|t|<|z|).
\]
 Letting $z=\frac{1}{2}$, we have  
\begin{align}
\label{for:Rm-middle}
 R_m(t)
&=\int^{t}_{0}\xi^m\frac{d}{d\xi}R_0\bigl(\xi,\frac{1}{2}\bigr)d\xi\\
&=-\frac{1}{2}\int^{t}_{0}\xi^m
\Bigl(\psi\bigl(\frac{1}{2}-\xi\bigr)
+\psi\bigl(\frac{1}{2}+\xi\bigr)\Bigr)d\xi
+\frac{1}{m+1}\psi(\frac{1}{2})t^{m+1}\nonumber\\
&=-\Phi_m\bigl(t,\frac{1}{2}\bigr)
-\frac{1}{2}\log{\baC_{m+1}(t)}
+\frac{1}{m+1}\psi(\frac{1}{2})t^{m+1},\nonumber
\end{align}
 where
\[
 \Phi_{m}(t,z):=\int^{t}_{0}\xi^m\psi(\xi+z)d\xi.
\]
 Notice that, in the last equality, we have used the formula
 $\psi(\frac{1}{2}-\xi)=\psi(\frac{1}{2}+\xi)-\pi\tan(\pi \xi)$
 and \eqref{for:int_multtrig}. 
 Hence, it is enough to evaluate the integral $\Phi_{m}(t,z)$.
 Define the polynomials ${}_nA_{-k}(z)$ for $1\le k\le n-1$ and
 ${}_nB_{m}(z)$ for $m\in\bN_{0}$ by the generating function
\begin{align*}
 \frac{t e^{(n-z)t}}{(e^{t}-1)^n}
=\sum^{n-1}_{k=1}(-1)^k{}_nA_{-k}(z)t^{-k}
+\sum^{\infty}_{m=0}(-1)^m{}_nB_{m}(z)\frac{t^m}{m!}.
\end{align*} 
 These are called the Barnes multiple Bernoulli polynomials (\cite{Barnes}).
 Notice that the degree of ${}_nB_{m}(z)$ is $m+n-1$.
 Since ${}_1B_m(z)=B_m(z)$,
 ${}_nB_{m}(z)$ gives a generalization of the Bernoulli polynomial.
 In fact, using the polynomial ${}_nB_m(z)$,
 one can evaluate the special values of
 the Barnes multiple zeta function $\z_n(w,z)$ at non-positive integer points;
\begin{equation}
\label{for:sv-multiple}
 \z_{n}(1-m,z)
=-\frac{{}_nB_{m}(z)}{m}
 \qquad (m\in\bN).
\end{equation}

 To obtain an explicit expression of $\Phi_{m}(t,z)$,
 we first show the following 

\begin{lem}
 It holds that 
\begin{multline}
\label{for:general-int_mGd}
 (m+1)\int^{t}_{0}\xi^m\log{\G_{n,r}(\xi+z)}d\xi\\
=\sum^{m+1}_{k=1}(-1)^{k-1}\binom{m+1}{k}\binom{r+k-1}{k}^{-1}t^{m+1-k}\log{\G_{n,r+k}(t+z)}
+P_{n,r}(t,z;m),
\end{multline} 
 where $P_{n,r}(t,z;m)$ is the polynomial in $t$ of degree $n+r+m$ defined by  
\begin{align}
\label{for:Ppolynomial}
 P_{n,r}(t,z;m)
:&=\sum^{m+1}_{k=1}
\frac{(-1)^kH(r,r+k-1)}{r+k}\binom{m+1}{k}\binom{r+k-1}{k}^{-1}
t^{m+1-k}{}_nB_{r+k}(t+z)\\
&\ \ \ +(-1)^{m+1}\binom{m+r}{m+1}^{-1}
\Bigl(\log{\G_{n,m+r+1}(z)}-\frac{H(r,m+r)}{m+r+1}{}_nB_{m+r+1}(z)\Bigr)
\nonumber
\end{align} 
 with $H(m,n):=\sum^{n}_{k=m}\frac{1}{k}=H(n)-H(m-1)$ for $n\ge m$.
\end{lem}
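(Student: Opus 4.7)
The starting point is a ladder relation for $\log\G_{n,r}$ obtained by differentiating the Barnes multiple zeta function. Since $\log\G_{n,r+1}(z)=\frac{\p}{\p w}\z_n(w,z)\bigl|_{w=-r}$ and $\frac{\p}{\p z}\z_n(w,z)=-w\,\z_n(w+1,z)$, differentiating in $z$ and specializing $w=-r$ gives
\begin{equation*}
 \frac{d}{dz}\log\G_{n,r+1}(z)=r\log\G_{n,r}(z)-\z_n(1-r,z)=r\log\G_{n,r}(z)+\frac{{}_nB_r(z)}{r},
\end{equation*}
where I have used \eqref{for:sv-multiple}. Equivalently,
\begin{equation*}
 \log\G_{n,r}(z)=\frac{1}{r}\,\frac{d}{dz}\log\G_{n,r+1}(z)-\frac{{}_nB_r(z)}{r^2}.
\end{equation*}

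Substituting this into $f(m,r):=\int_0^{t}\xi^m\log\G_{n,r}(\xi+z)\,d\xi$ and integrating by parts on the derivative term yields the recursion
\begin{equation*}
 f(m,r)=\frac{t^m}{r}\log\G_{n,r+1}(t+z)-\frac{m}{r}f(m-1,r+1)-\frac{1}{r^2}Q_m(r),
\qquad Q_m(r):=\int_0^{t}\xi^m\,{}_nB_r(\xi+z)\,d\xi,
\end{equation*}
valid for $m\ge 1$, with the natural $m=0$ companion
$f(0,r)=\frac{1}{r}\bigl(\log\G_{n,r+1}(t+z)-\log\G_{n,r+1}(z)\bigr)-\frac{1}{r^2}Q_0(r)$. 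Moreover, the generating function for ${}_nB_r$ yields $\frac{d}{d\xi}{}_nB_{r+1}(\xi+z)=(r+1){}_nB_r(\xi+z)$, so a second integration by parts gives $Q_m(r)=\frac{1}{r+1}\bigl(t^m{}_nB_{r+1}(t+z)-mQ_{m-1}(r+1)\bigr)$ for $m\ge 1$ and $Q_0(r)=\frac{1}{r+1}\bigl({}_nB_{r+1}(t+z)-{}_nB_{r+1}(z)\bigr)$. These two recursions supply all the building blocks appearing in \eqref{for:general-int_mGd} and \eqref{for:Ppolynomial}.

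The identity \eqref{for:general-int_mGd} is then proved by induction on $m$. The base $m=0$ follows by a direct comparison of $f(0,r)$ and $P_{n,r}(t,z;0)$ using the closed form for $Q_0(r)$; the Bernoulli contributions cancel to leave exactly the claimed expression. For the inductive step, apply the $f$-recursion and insert the formula \eqref{for:general-int_mGd} for $f(m-1,r+1)$. The reindexing $k\mapsto k+1$ in the inner sum, combined with the elementary identity $\frac{m+1}{r}\binom{m}{j-1}\binom{r+j-1}{j-1}^{-1}=\binom{m+1}{j}\binom{r+j-1}{j}^{-1}$, realigns the log\,$\G_{n,r+j}(t+z)$ coefficients with the right-hand side of \eqref{for:general-int_mGd}. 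The $j=1$ and $j=m+1$ endpoints are supplied, respectively, by the explicit $\frac{t^m}{r}\log\G_{n,r+1}(t+z)$ term produced by the $f$-recursion and by the $\log\G_{n,r+m+1}(z)$ term carried inside $P_{n,r+1}$.

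What remains, and constitutes the main technical obstacle, is the polynomial reconciliation
\begin{equation*}
 P_{n,r}(t,z;m)=-\frac{m+1}{r}P_{n,r+1}(t,z;m-1)-\frac{m+1}{r^2}Q_m(r).
\end{equation*}
The strategy is to expand $Q_m(r)$ using the $Q$-recursion down to the closed form
$Q_m(r)=\sum_{k=1}^{m}\frac{(-1)^{k-1}m!}{(m-k+1)!\prod_{i=1}^{k}(r+i)}\,t^{m-k+1}{}_nB_{r+k}(t+z)+\frac{(-1)^{m}m!}{\prod_{i=1}^{m+1}(r+i)}\bigl({}_nB_{r+m+1}(t+z)-{}_nB_{r+m+1}(z)\bigr)$, match term-by-term against the expansion of $P_{n,r+1}(t,z;m-1)$ after the shift $r\mapsto r+1$, and observe that the resulting harmonic-number coefficients sum telescopically via $H(r+1,r+k-1)=H(r,r+k-1)-\frac{1}{r}$. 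The $\log\G_{n,m+r+1}(z)$ and ${}_nB_{m+r+1}(z)$ constants in $P_{n,r}$ appear from, respectively, the $\log\G$-boundary term in $f(0,r+m)$ in the last step of the induction and the $z$-boundary of $Q_0(r+m)$. This purely combinatorial bookkeeping is where the proof is hardest, but no new analytic input is required beyond the two recursions above.
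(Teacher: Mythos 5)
Your proof is correct, but it takes a genuinely different route from the paper's. The paper keeps the deformation parameter $w$ alive throughout: it performs the repeated integration by parts once, on $\int_0^t\xi^m(\xi+\a)^{-w}d\xi$, sums over the lattice to get a closed formula for $\int_0^t\xi^m\z_n(w,\xi+z)\,d\xi$ with coefficients $\frac{(-1)^{k-1}m!}{(m+1-k)!\,(-w+1)_k}$, and then differentiates that single identity at $w=1-r$; the harmonic numbers $H(r,r+k-1)$ fall out of $\frac{\p}{\p w}(-w+1)_k^{-1}$ and the Bernoulli polynomials out of the special values \eqref{for:sv-multiple}. You instead specialize to $w=1-r$ first, via the ladder relation $\log\G_{n,r}(z)=\frac{1}{r}\frac{d}{dz}\log\G_{n,r+1}(z)-\frac{{}_nB_r(z)}{r^2}$ (which is a correct consequence of $\frac{\p}{\p z}\z_n(w,z)=-w\z_n(w+1,z)$ and \eqref{for:sv-multiple}), and run a two-level recursion with induction on $m$. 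I checked the reconciliation identity $P_{n,r}(t,z;m)=-\frac{m+1}{r}P_{n,r+1}(t,z;m-1)-\frac{m+1}{r^2}Q_m(r)$ that you isolate as the crux: it does hold, since $\frac{1}{r(r+k)}\binom{m+1}{k}\binom{r+k-1}{k}^{-1}=\frac{(m+1)!}{r^2(m+1-k)!\prod_{i=1}^{k}(r+i)}$ matches the $Q_m(r)$ contribution exactly against the defect $H(r,r+k-1)-H(r+1,r+k-1)=\frac{1}{r}$, and the constants at $z$ likewise align, so the induction closes. The paper's approach buys a one-shot derivation with no induction and no separate polynomial identity to verify; yours is more hands-on but avoids the interchange of the $t$-integral with the infinite lattice sum and the meromorphic continuation in $w$. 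One small slip in your narrative: the $k=m+1$ endpoint of the $\log\G$-sum, which involves $\log\G_{n,r+m+1}(t+z)$, is supplied by the reindexed top term of the induction-hypothesis sum for $f(m-1,r+1)$, not by the $\log\G_{n,r+m+1}(z)$ constant carried inside $P_{n,r+1}$ (that constant feeds the constant term of $P_{n,r}$); this does not affect the validity of the argument.
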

\begin{proof}
 Integration by parts yields   
\begin{equation*}
\label{for:iterating}
 \int^{t}_{0}\xi^m(\xi+\a)^{-w}d\xi
=\sum^{m+1}_{k=1}
\frac{(-1)^{k-1}m!}{(m+1-k)!}\frac{t^{m+1-k}(t+\a)^{-w+k}}{(-w+1)_k}
+(-1)^{m+1}m!\frac{\a^{-w+m+1}}{(-w+1)_{m+1}}.
\end{equation*}
 Hence, for $\Re(w)>m+n+1$,
 changing the order of the integral and summation,
 we have 
\begin{align*}
 \int^{t}_{0}\xi^m\z_{n}(w,\xi+z)d\xi
&=\sum_{m_1,\ldots,m_n\ge 0}
\int^{t}_{0}\xi^m\bigl(\xi+(m_1+\cdots+m_n+z)\bigr)^{-w}d\xi\\
&=\sum^{m+1}_{k=1}
\frac{(-1)^{k-1}m!}{(m+1-k)!}\frac{t^{m+1-k}\z_n(w-k,t+z)}{(-w+1)_k}
+(-1)^{m+1}m!\frac{\z_n(w-m-1,z)}{(-w+1)_{m+1}}.
\end{align*}
 This gives a meromorphic continuation of the lefthand-side to the whole plane $\bC$.
 Now the desired formula \eqref{for:general-int_mGd}
 immediately follows from the above equation
 by differentiating both sides at $w=1-r$
 together with the following equation obtained from \eqref{for:sv-multiple};
\[
 \frac{\p}{\p w}\Bigl(\frac{\z_n(w-k,z)}{(-s+1)_k}\Bigr)\Bigl|_{w=1-r}
=\frac{1}{(r)_k}
\Bigl(\log{\G_{n,r+k}(z)}-\frac{1}{r+k}H(r,r+k-1){}_nB_{r+k}(z)\Bigr).
\] 
\end{proof}

\begin{cor}
\label{cor:Phi}
 It holds that 
\begin{equation}
\label{for:explicit-Phi}
 \Phi_m(t,z)
=\sum^{m+1}_{k=1}(-1)^{k+1}\binom{m}{k-1}t^{m+1-k}\log{\mG_{k}(t+z)}
+P_{m}(t,z),
\end{equation}
 where $P_m(t,z)$ is the polynomial in $t$ of degree $m+1$ defined by
\begin{align}
\label{for:P} 
 P_m(t,z):
&=\frac{1}{m+1}H(m)t^{m+1}
+\sum^{m}_{l=1}\frac{(-1)^{l+m}B_{m+1-l}(z)}{l(m+1-l)}t^{l}
+(-1)^{m+1}\log{\mG_{m+1}(z)}.
\end{align}
\end{cor}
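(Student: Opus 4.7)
The approach is to reduce the corollary to Lemma~\eqref{for:general-int_mGd} specialized at $n=r=1$, where $\G_{1,1}(z) = \mG_1(z) = \G(z)/\sqrt{2\pi}$ so that $\psi(z) = (\log\G(z))' = (\log\mG_1(z))'$. For $m \ge 1$, integration by parts gives
\[
\Phi_m(t,z) = t^m \log\G(t+z) - m\int_0^t \xi^{m-1}\log\G(\xi+z)\,d\xi,
\]
and writing $\log\G = \log\mG_1 + \tfrac{1}{2}\log(2\pi)$ causes the two $\tfrac{1}{2}\log(2\pi)$ contributions (from the boundary term and from the integral) to cancel, reducing matters to an application of Lemma~\eqref{for:general-int_mGd}.

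Invoking the lemma with $(n,r)=(1,1)$ and $m$ replaced by $m-1$, then reindexing the resulting sum via $k \mapsto k-1$, merges the boundary term $t^m\log\mG_1(t+z)$ naturally with the $k=1$ entry and yields
\[
\Phi_m(t,z) = \sum_{k=1}^{m+1}(-1)^{k+1}\binom{m}{k-1}t^{m+1-k}\log\mG_k(t+z) \;-\; P_{1,1}(t,z;m-1),
\]
which has the structure of \eqref{for:explicit-Phi} with $P_m(t,z) = -P_{1,1}(t,z;m-1)$. The edge case $m=0$ is handled directly from $\Phi_0(t,z) = \log\G(t+z) - \log\G(z) = \log\mG_1(t+z) - \log\mG_1(z)$, matching the claim since the harmonic-sum and Bernoulli-sum pieces of \eqref{for:P} vanish, leaving $P_0(t,z) = -\log\mG_1(z)$.

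It remains to identify $-P_{1,1}(t,z;m-1)$, as made explicit in \eqref{for:Ppolynomial} with ${}_1B_{k+1}=B_{k+1}$, with the compact polynomial \eqref{for:P}. Expanding $B_{k+1}(t+z) = \sum_{j=0}^{k+1}\binom{k+1}{j}B_{k+1-j}(z)t^j$ and regrouping by powers of $t$ produces a double sum whose coefficient of $t^{m+1}$ reduces via the identity
\[
\sum_{k=1}^{m}(-1)^{k+1}\binom{m}{k}\frac{H(k)}{k+1} = \frac{H(m)}{m+1}
\]
(provable by induction on $m$, or from the beta-type integral $\int_0^1(1-x)^m\,dx$), while the coefficients of $t^l$ for $1 \le l \le m$ collapse to $\tfrac{(-1)^{l+m}B_{m+1-l}(z)}{l(m+1-l)}$ by an analogous binomial-harmonic telescoping. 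This last combinatorial collapse is the main obstacle: it is elementary but requires careful bookkeeping of binomial coefficients, harmonic numbers and Bernoulli values to turn the raw output of Lemma~\eqref{for:general-int_mGd} into the transparent closed form \eqref{for:P}.
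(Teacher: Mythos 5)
Your argument is correct and follows essentially the same route as the paper: apply \eqref{for:general-int_mGd} with $(n,r)=(1,1)$ and $m$ replaced by $m-1$, absorb the boundary term $t^m\log\mG_1(t+z)$ as the $k=1$ summand, and then verify the polynomial identity $-P_{1,1}(t,z;m-1)=P_m(t,z)$ coefficient by coefficient. The binomial--harmonic collapse you defer for the $t^l$ coefficients ($1\le l\le m$) is exactly where the paper spends its effort, evaluating $d_l(m):=\sum^{l}_{k=0}\binom{l}{k}(-1)^kH(m-k)$ via the generating function $-(1-s)^{l-1}\log(1-s)$ and beta-type integrals, which matches the method you indicate.
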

\begin{proof}
 From the formula \eqref{for:general-int_mGd}, we have 
\begin{align*}
 \Phi_m(t,z)
&=t^{m}\log{\mG_1(t+z)}-m\int^{t}_{0}\xi^{m-1}\log{\G_{1,1}(\xi+z)}d\xi\\
&=\sum^{m+1}_{k=1}(-1)^{k+1}\binom{m}{k-1}t^{m+1-k}\log{\mG_{k}(t+z)}
-P_{1,1}(t,z;m-1).
\end{align*}
 Here, we have used the identity $\G_{1,1}(z)=\mG_1(z)=\frac{\G(z)}{\sqrt{2\pi}}$.
 Hence it suffices to show that 
\begin{equation}
\label{for:PP}
 -P_{1,1}(t,z;m-1)=P_m(t,z).
\end{equation}
 From the definition \eqref{for:Ppolynomial},
 using the identity $B_m(t+z)=\sum^{m}_{l=0}\binom{n}{l}B_l(z)t^{m-l}$
 and changing the order of the summations, 
 we have
\begin{align*}
 -P_{1,1}(t,z;m-1)
&=\frac{(-1)^{m+1}}{m+1}\sum^{m+1}_{l=0}\binom{m+1}{l}d_l(m)B_{m+1-l}(z)t^{l}\\
&\ \ \ +\frac{(-1)^{m}H(m)}{m+1}B_{m+1}(z)+(-1)^{m+1}\log{\mG_{m+1}(z)},
\end{align*}
 where $d_l(m):=\sum^{l}_{k=0}\binom{l}{k}(-1)^kH(m-k)$. 
 This implies that, to obtain the equation \eqref{for:PP},
 it is enough to show that 
\begin{equation*}
 d_l(m)=
\begin{cases}
 \DS{H(m)} & (l=0),\\[7pt]
 \DS{(-1)^{l-1}\frac{(m-l)!(l-1)!}{m!}} & (1\le l\le m),\\[7pt]
 \DS{(-1)^{m+1}H(m)} & (l=m+1).
\end{cases}
\end{equation*}
 Actually, the case $l=0$ is clear. 
 Let $1\le l\le m+1$ and $F_{l}(s):=\sum^{\infty}_{m=0}d_l(m)s^m$ a generating function of $d_l(m)$.
 Then, from the identity $\sum^{\infty}_{m=1}H(m)s^m=-(1-s)^{-1}\log{(1-s)}$, we have 
\begin{align*}
 F_{l}(s)
=\sum^{l}_{k=0}\binom{l}{k}(-s)^k\sum^{\infty}_{m=1}H(m)s^m
=-(1-s)^{l-1}\log{(1-s)}.
\end{align*}
 Hence, by the Leibniz rule, we have  
\begin{align}
\label{for:dtilde}
 d_l(m)=\frac{1}{m!}\frac{d^m}{ds^m}F_{l}(s)\Bigl|_{s=0}
&=-\frac{1}{m!}\sum^{m}_{k=0}\binom{m}{k}
\frac{d^{m-k}}{ds^{m-k}}\bigl((1-s)^{l-1}\bigr)
\frac{d^{k}}{ds^k}\bigl(\log{(1-s)}\bigr)\Bigl|_{s=0}\\
&=\sum^{m}_{k=\max\{m+1-l,1\}}\frac{(-1)^{m-k}}{k}\binom{l-1}{m-k}.\nonumber
\end{align}
 When $1\le l\le m$, this is equal to 
\[
 \sum^{l-1}_{k=0}\frac{(-1)^k}{m-k}\binom{l-1}{k}
=\int^{1}_{0}x^{m-1}(1-x^{-1})^{l-1}dx
=(-1)^{l-1}\frac{(m-l)!(l-1)!}{m!}.
\]
 On the other hand, when $l=m+1$, using the formula $\psi(m)=-\g+H(m-1)$ for $m\in\bN$
 and the equation (see, e.g., \cite[p.15 (13)]{SrivastavaChoi2001})
\begin{equation*}
 \int^{1}_{0}\frac{(1-\xi)^{z-1}-1}{\xi}d\xi=-\g-\psi(z)
 \qquad (\Re(z)>0),
\end{equation*}
 where $\g=0.57721\ldots$ is the Euler constant,  
 we see that the rightmost-hand side of \eqref{for:dtilde} equals
\[
 (-1)^m\sum^{m}_{k=1}\frac{(-1)^k}{k}\binom{m}{k}
=(-1)^m\int^{1}_{0}\frac{(1-x)^m-1}{x}dx
=(-1)^{m+1}H(m).
\]
 This completes the proof.
\end{proof}

 We now give the proof of Proposition~\ref{prop:Pa}. 

\begin{proof}
[Proof of  Proposition~\ref{prop:Pa}]
 Notice that, since $m$ is odd, $B_{m+1-l}(\frac{1}{2})=0$ if $l$ is odd.
 Hence one obtains the formula \eqref{for:Rm} from \eqref{for:Rm-middle}
 together with the formulas \eqref{for:explicit-Phi} and \eqref{for:P} with $z=\frac{1}{2}$.
\end{proof}

\section{Milnor-Selberg zeta functions $Z_{\G,r}(s)$} 
\label{sec:zeta factor}

 In this final section,
 we study the Milnor-Selberg zeta function $Z_{\G,r}(s)$.
 Recall that $Z_{\G,r}(s)$ is defined by \eqref{def:zeta}
 and is holomorphic in the region $U$ (see Figure~$4$). 

\subsection{Analytic properties of $Z_{\G,r}(s)$}

 The following theorem is easily obtained from the earlier discussion.
\begin{thm}
\label{thm:analyticpropZ}
 The function $Z_{\G,r}(s)$
 can be continued analytically to
\ \\[-20pt]
\begin{itemize}
 \item[$\mathrm{(i)}$] the whole plane $\bC$ when $r=1$.\ \\[-20pt]
 \item[$\mathrm{(ii)}$] $\Omega_{\G}\setminus (-\infty,-1]$ when $r\ge 2$.\ \\[-20pt]
\end{itemize}
 Moreover, it satisfies the functional equation 
\begin{equation}
\label{for:feMS}
 Z_{\G,r}(1-s)
=\Bigl(\prod^{2r}_{j=1}S_j(s)^{-\a_{r,j}(s-\frac{1}{2})}\Bigr)^{g-1}Z_{\G,r}(s)
\end{equation}
 for all
\ \\[-20pt]
\begin{itemize}
 \item[$\mathrm{(i)}$] $s\in\bC$ when $r=1$. \ \\[-20pt]
 \item[$\mathrm{(ii)}$] $s\in\Omega_{\G}\setminus\bigl((-\infty,-1]\cup[2,+\infty)\bigr)$
 when $r\ge 2$.\ \\[-20pt]
\end{itemize}
\end{thm}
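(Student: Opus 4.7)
The plan is to deduce everything from the factorization
\[
 Z_{\G,r}(s)=\mathrm{D}_{\G,r}(s)\,\phi_r(s)^{-(g-1)},
\]
which is just a rearrangement of \eqref{for:main+}. Since the two factors on the right-hand side have already been analyzed separately in Sections~\ref{sec:HD} and \ref{sec:gamma factor}, each claim of the theorem should follow by intersecting domains and multiplying functional equations.

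For the analytic continuation (part (ii), the main case $r\ge 2$), I would first invoke Theorem~\ref{thm:HDDs} to extend $\mathrm{D}_{\G,r}(s)$ holomorphically to $\Omega_{\G}$ with no zeros, and Theorem~\ref{thm:gamma factor1} to extend $\phi_r(s)$ holomorphically to $\bC\setminus((-\infty,-1]\cup[0,-i\infty))$. By construction $\Omega_{\G}$ already excludes the ray $[0,-i\infty)$ coming from $\alpha_0^{+}=0$, so on $\Omega_{\G}\setminus(-\infty,-1]$ the function $\phi_r(s)$ is holomorphic and non-vanishing (its only potential zeros or poles sit at the negative integers, which lie on the excluded cut), and the quotient defines a holomorphic function there. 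For the case $r=1$, one just notes that $\mathrm{D}_{\G,1}(s)$ is entire with zeros at $s=\alpha_j^{\pm}$ (Theorem~\ref{thm:HDDs}) while $\phi_1(s)$ is meromorphic on $\bC$ with poles only on $(-\infty,-1]\cup\{0\}$, so the quotient is entire after the cancellations described classically, recovering the well known analytic continuation of $Z_{\G}(s)$.

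For the functional equation I would simply combine the trivial symmetry $\mathrm{D}_{\G,r}(1-s)=\mathrm{D}_{\G,r}(s)$ from Theorem~\ref{thm:HDDs} with the nontrivial functional equation
\[
 \phi_r(1-s)=\Bigl(\prod_{j=1}^{2r}S_j(s)^{\a_{r,j}(s-\tfrac12)}\Bigr)\phi_r(s)
\]
of Theorem~\ref{thm:fegamma}, raise the second one to the $(g-1)$-th power, and divide:
\[
 Z_{\G,r}(1-s)=\frac{\mathrm{D}_{\G,r}(1-s)}{\phi_r(1-s)^{g-1}}=\Bigl(\prod_{j=1}^{2r}S_j(s)^{-\a_{r,j}(s-\tfrac12)}\Bigr)^{g-1}Z_{\G,r}(s),
\]
which is exactly \eqref{for:feMS}.

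The main obstacle I anticipate is purely bookkeeping: verifying that the domain claimed in the theorem really is the intersection of the domains on which each ingredient is valid. One must check that $\Omega_{\G}$ is stable under $s\mapsto 1-s$ (which is clear from its definition, as the excluded cuts are paired symmetrically about the line $\Re(s)=\tfrac12$), that the exclusion $(-\infty,-1]$ in the analytic continuation comes entirely from $\phi_r$, and that for the functional equation one must additionally remove $[2,+\infty)=1-(-\infty,-1]$ to keep $\phi_r(1-s)$ in the regularity domain of Theorem~\ref{thm:fegamma}; the imaginary exclusions $[0,-i\infty)$ and $[1,+i\infty)$ appearing in Theorem~\ref{thm:fegamma} are automatically absorbed into $\Omega_{\G}$. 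Once this bookkeeping is in place, the proof is essentially a one-line manipulation.
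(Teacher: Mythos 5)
Your proposal is correct and is essentially identical to the paper's own proof, which simply writes $Z_{\G,r}(s)=\phi_r(s)^{-(g-1)}\mathrm{D}_{\G,r}(s)$ and cites Theorem~\ref{thm:HDDs}, Theorem~\ref{thm:gamma factor1} and Theorem~\ref{thm:fegamma}. Your extra bookkeeping (noting that $\Omega_{\G}\subset W_0=\bC\setminus([0,-i\infty)\cup[1,+i\infty))$ absorbs the imaginary cuts, and that the exclusions $(-\infty,-1]$ and $[2,+\infty)$ come from $\phi_r(s)$ and $\phi_r(1-s)$ respectively) is accurate and only makes explicit what the paper leaves implicit.
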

\begin{proof}
 From \eqref{for:det_r2},
 we have 
\begin{equation}
\label{for:ZGD}
 Z_{\G,r}(s)=\phi_r(s)^{-(g-1)}\mathrm{D}_{\G,r}(s).
\end{equation}
 This gives the desired analytic continuation
 from Theorem~\ref{thm:HDDs} and Theorem~\ref{thm:gamma factor1}. 
 The functional equation \eqref{for:feMS} immediately follows from
 Theorem~\ref{thm:HDDs} and Theorem~\ref{thm:fegamma}.
\end{proof}

 To study a ``complete Milnor-Selberg zeta function'', we need the following lemma.

\begin{lem}
 For $0\le l\le 2r-1$, let 
\[
 \hat{\a}_{r,l}(t):=(-1)^{l}\sum^{2r-l}_{j=1}\binom{2r-j}{l}\a_{r,j}(t).
\]
 Then, for $1\le m\le 2r$, we have 
\begin{equation}
\label{for:checka2}
 \sum^{2r-1}_{l=2r-m}\binom{l}{2r-m}\hat{\a}_{r,l}(t)=(-1)^m\a_{r,m}(t).
\end{equation}
\end{lem}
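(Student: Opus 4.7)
The plan is to substitute the definition of $\hat{\a}_{r,l}(t)$ into the left-hand side and then invert the order of summation. Writing out, the LHS equals
\[
 \sum^{2r-1}_{l=2r-m}\binom{l}{2r-m}(-1)^{l}\sum^{2r-l}_{j=1}\binom{2r-j}{l}\a_{r,j}(t).
\]
For a fixed $j$, the inequalities $2r-m\le l$ and $l\le 2r-j$ force $j\le m$, so after swapping the sums the outer index runs over $1\le j\le m$ and the inner one over $2r-m\le l\le 2r-j$.

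The next step is to simplify the inner binomial product using the trinomial revision identity
\[
 \binom{2r-j}{l}\binom{l}{2r-m}=\binom{2r-j}{2r-m}\binom{m-j}{l-(2r-m)}.
\]
Setting $k:=l-(2r-m)$ and pulling out the sign $(-1)^{2r-m}=(-1)^m$, the inner sum becomes
\[
 (-1)^{m}\binom{2r-j}{2r-m}\sum^{m-j}_{k=0}(-1)^{k}\binom{m-j}{k}.
\]
By the standard identity $\sum^{n}_{k=0}(-1)^{k}\binom{n}{k}=\delta_{n,0}$, the inner alternating sum vanishes unless $j=m$, in which case it equals $1$. Thus only the $j=m$ term survives, and it contributes $(-1)^{m}\binom{2r-m}{2r-m}\a_{r,m}(t)=(-1)^{m}\a_{r,m}(t)$, which is the desired right-hand side.

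The argument is purely combinatorial and involves no analytic subtleties; the only place where one has to be careful is keeping track of the range of summation after the interchange (to confirm that $j>m$ gives an empty inner sum) and getting the sign $(-1)^{2r-m}=(-1)^{m}$ right. No deeper obstacle is expected.
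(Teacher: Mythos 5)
Your proof is correct and follows essentially the same route as the paper's: after interchanging the order of summation, the inner sum $\sum_{l=2r-m}^{2r-j}(-1)^l\binom{l}{2r-m}\binom{2r-j}{l}$ collapses to a Kronecker delta, leaving only the $j=m$ term. The paper simply cites the identity $\sum_{l=a}^{b}(-1)^l\binom{l}{a}\binom{b}{l}=\delta_{a,b}(-1)^a$ where you derive it via trinomial revision and the alternating binomial sum, but the substance is identical.
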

\begin{proof}
 Using the equation $\sum^{b}_{l=a}(-1)^l\binom{l}{a}\binom{b}{l}=\d_{a,b}(-1)^a$
 where $\d_{a,b}$ is the Kronecker delta, we have 
\begin{align*}
 \sum^{2r-1}_{l=2r-m}\binom{l}{2r-m}\hat{\a}_{r,l}(t)
=\sum^{m}_{j=1}\Biggl(\sum^{2r-j}_{l=2r-m}(-1)^l\binom{l}{2r-m}\binom{2r-j}{l}\Biggr)\a_{r,j}(t)
=(-1)^m\a_{r,m}(t).
\end{align*}
\end{proof}

 The following gives a generalization of the functional equation \eqref{for:fe2}.

\begin{cor}
\label{cor:completeMS}
 Define the complete Milnor-Selberg zeta function by
\begin{equation}
\label{def:completeMS}
 \Xi_{\G,r}(s)
:=\Bigl(\prod^{2r-1}_{l=0}\G_{2r}(s+l)^{\hat{\a}_{r,l}(s-\frac{1}{2})}\Bigr)^{g-1}Z_{\G,r}(s).
 \end{equation} 
 Then, we have
\begin{equation}
\label{for:completeMS}
 \Xi_{\G,r}(s)
=e^{\frac{(2r)!!}{r^2(2r-1)!!}(g-1)(s-\frac{1}{2})^{2r}}\cdot \mathrm{D}_{\G,r}(s).
\end{equation}
 In particular, $\Xi_{\G,r}(s)$ is
\ \\[-20pt]
\begin{itemize}
 \item[$(\mathrm{i})$] an entire function when $r=1$.\ \\[-20pt]
 \item[$(\mathrm{ii})$] a holomorphic function in $\Omega_{\G}$ when $r\ge 2$.\ \\[-20pt]
\end{itemize}
 Moreover, it satisfies the functional equation 
\begin{equation*}
 \Xi_{\G,r}(1-s)=\Xi_{\G,r}(s)
\end{equation*}
 for all
\ \\[-20pt]
\begin{itemize}
 \item[$\mathrm{(i)}$] $s\in\bC$ when $r=1$. \ \\[-20pt]
 \item[$\mathrm{(ii)}$] $s\in\Omega_{\G}$ when $r\ge 2$.\ \\[-20pt]
\end{itemize}
\end{cor}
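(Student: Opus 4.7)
The plan is to derive the identity \eqref{for:completeMS} as a direct consequence of the factorization $\mathrm{D}_{\G,r}(s)=\phi_r(s)^{g-1}Z_{\G,r}(s)$ from Theorem~\ref{thm:main}, together with a re-expression of the gamma factor $\phi_r(s)$ in which the graded family $\G_j(s)$, $j=1,\dots,2r$, is replaced by shifts $\G_{2r}(s+l)$, $l=0,\dots,2r-1$, of a single top-dimensional Barnes gamma. The polynomials $\hat{\a}_{r,l}(t)$ are cooked up precisely to absorb this change of basis.

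The key step is to iterate the ladder relation \eqref{for:laddergamma} in the form $\G_{n-1}(z)=\G_n(z)\G_n(z+1)^{-1}$. A short induction on $k$, using Pascal's rule, will give
\[
 \G_{n-k}(z)=\prod_{l=0}^{k}\G_n(z+l)^{(-1)^l\binom{k}{l}},
\]
and specializing $n=2r$, $k=2r-j$ yields
\[
 \G_j(s)=\prod_{l=0}^{2r-j}\G_{2r}(s+l)^{(-1)^l\binom{2r-j}{l}}.
\]
Substituting this into the Barnes expression \eqref{for:gammafactor1} of $\phi_r(s)$ and swapping the order of the resulting double product, the exponent of $\G_{2r}(s+l)$ collects into $(-1)^l\sum_{j=1}^{2r-l}\binom{2r-j}{l}\a_{r,j}(t)$, which is exactly $\hat{\a}_{r,l}(t)$ by definition. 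Hence
\[
 \phi_r(s)=e^{-\frac{(2r)!!}{r^2(2r-1)!!}(s-\frac{1}{2})^{2r}}\prod_{l=0}^{2r-1}\G_{2r}(s+l)^{\hat{\a}_{r,l}(s-\frac{1}{2})}.
\]
Raising to the power $g-1$ and multiplying by $Z_{\G,r}(s)$ gives \eqref{for:completeMS} at once; the identity \eqref{for:checka2} proved just before the statement is the binomial inversion of this change of basis and provides a useful consistency check.

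With \eqref{for:completeMS} in hand, the remaining assertions are essentially formal. The analytic properties of $\Xi_{\G,r}(s)$ are inherited directly from those of $\mathrm{D}_{\G,r}(s)$ in Theorem~\ref{thm:HDDs}, since the prefactor $e^{\frac{(2r)!!}{r^2(2r-1)!!}(g-1)(s-\frac{1}{2})^{2r}}$ is entire. The functional equation $\Xi_{\G,r}(1-s)=\Xi_{\G,r}(s)$ is immediate from $\mathrm{D}_{\G,r}(1-s)=\mathrm{D}_{\G,r}(s)$ (Theorem~\ref{thm:HDDs}) together with the invariance of $(s-\frac{1}{2})^{2r}$ under $s\mapsto 1-s$, as it is an even power of $t=s-\frac{1}{2}$.

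The only place requiring any real care is the combinatorial double-sum swap that produces $\hat{\a}_{r,l}(t)$; once that binomial bookkeeping is set up, the whole corollary is a clean repackaging of Theorem~\ref{thm:HDDs} and Theorem~\ref{thm:gamma factor1}, and no new analytic input enters.
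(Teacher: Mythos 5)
Your proposal is correct, and it reaches \eqref{for:completeMS} by running the change of basis in the opposite direction from the paper. The paper expands each $\G_{2r}(s+l)$ downward via the ladder relation, $\G_{2r}(s+l)=\prod^{l}_{j=0}\G_{2r-j}(s)^{(-1)^j\binom{l}{j}}$, and then must invoke the separately proved inversion identity \eqref{for:checka2} to recognize the resulting exponents as $\a_{r,m}(t)$. You instead iterate $\G_{n-1}(z)=\G_n(z)\G_n(z+1)^{-1}$ upward to get $\G_j(s)=\prod_{l=0}^{2r-j}\G_{2r}(s+l)^{(-1)^l\binom{2r-j}{l}}$ (your induction via Pascal's rule is sound), substitute into \eqref{for:gammafactor1}, and after swapping the double product the exponent of $\G_{2r}(s+l)$ is $(-1)^l\sum_{j=1}^{2r-l}\binom{2r-j}{l}\a_{r,j}(t)$, which is $\hat{\a}_{r,l}(t)$ \emph{by definition}. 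The payoff of your direction is that the preparatory lemma containing \eqref{for:checka2} becomes unnecessary (it is, as you note, just the binomial inversion of your computation and serves only as a consistency check); the cost is negligible. The remaining analytic and functional-equation claims are handled identically in both arguments, as formal consequences of Theorem~\ref{thm:HDDs} and the evenness of $(s-\frac{1}{2})^{2r}$ in $t=s-\frac12$.
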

\begin{proof}
 It is sufficient to prove the equation \eqref{for:completeMS}.
 To do that, from \eqref{for:det_r2} with \eqref{for:gammafactor1}, 
 it is enough to show that
 $\prod^{2r-1}_{l=0}\G_{2r}(s+l)^{\hat{\a}_{r,l}(t)}=\prod^{2r}_{m=1}\G_{m}(s)^{\a_{r,m}(t)}$.
 Actually, from the ladder relation \eqref{for:laddergamma},
 one can show that $\G_{2r}(s+l)=\prod^{l}_{j=0}\G_{2r-j}(s)^{(-1)^j\binom{l}{j}}$. 
 This yields
\begin{align*}
 \prod^{2r-1}_{l=0}\G_{2r}(s+l)^{\hat{\a}_{r,l}(t)}
&=\prod^{2r-1}_{l=0}\prod^{l}_{j=0}\G_{2r-j}(s)^{(-1)^j\binom{l}{j}\hat{\a}_{r,l}(t)}\\
&=\prod^{2r}_{m=1}\G_m(s)^{(-1)^m\sum^{2r-1}_{l=2r-m}\binom{l}{2r-m}\hat{\a}_{r,l}(t)}\\
&=\prod^{2r}_{m=1}\G_m(s)^{\a_{r,m}(t)}.
\end{align*}
 In the last equality, we have used the equation \eqref{for:checka2}.
 This completes the proof.
\end{proof}

\begin{example}
 Let $t=s-\frac{1}{2}$.
 Then, we have 
\begin{align*}
 \Xi_{\G,1}(s)&=\bigl(\G_2(s)^{2}\G_2(s+1)^2\bigr)^{g-1}Z_{\G,1}(s)=\Xi_{\G}(s),\\
 \Xi_{\G,2}(s)&=\bigl(\G_4(s)^{-\frac{1}{2}+2t^2}\G_4(s+1)^{-\frac{23}{2}-2t^2}\G_4(s+2)^{-\frac{23}{2}-2t^2}\G_4(s+3)^{-\frac{1}{2}+2t^2}\bigr)^{g-1}Z_{\G,2}(s),\\
 \Xi_{\G,3}(s)&=\bigl(\G_6(s)^{\frac{1}{8}-t^2+2t^4}\G_6(s+1)^{\frac{237}{8}-21t^2-6t^4}\G_6(s+2)^{\frac{841}{4}+22t^2+4t^4}\\
&\ \ \ \times\G_6(s+3)^{\frac{841}{4}+22t^2+4t^4}\G_6(s+4)^{\frac{237}{8}-21t^2-6t^4}\G_6(s+5)^{\frac{1}{8}-t^2+2t^4}\bigr)^{g-1}Z_{\G,3}(s).
\end{align*}
\end{example}

\begin{remark}
 Using the ladder relations \eqref{for:laddergamma} and \eqref{for:laddersine},
 one can prove that 
\[
 \prod^{2r}_{j=1}S_j(s)^{\a_{r,j}(t)}
=\frac{\prod^{2r-1}_{l=0}\G_{2r}(1-s+l)^{\hat{\a}_{r,j}(t)}}
{\prod^{2r-1}_{l=0}\G_{2r}(s+l)^{\hat{\a}_{r,j}(t)}}.
\]  
 This reads the definition \eqref{def:completeMS} of the complete Milnor-Selberg zeta function.
\end{remark}

\begin{remark}
 The equation~\eqref{for:completeMS} implies that 
 we can get rid of the singularities of $Z_{\G,r}(s)$ in $(-\infty,-1]$
 by multiplying suitable gamma factors.  
 Moreover, it also says that
 the Milnor-Selberg zeta function $Z_{\G,r}(s)$ has no ``non-trivial zeros''
 because $\mathrm{D}_{\G,r}(s)$ does have no zeros.
\end{remark}

\begin{remark}
\label{rem:multpoly1}
 Let $f(s)$ be a function on $\bC$ and $m(s)$ a polynomial.
 Suppose that it can be written as $f(s)=(s-a)^{m(s)}g(s)$ around $s=a$
 where $g(s)$ is a holomorphic function at $s=a$ with $g(a)\ne 0$.
 In this case, let us say that $f(s)$ has a ``multiplicity polynomial $m(s)$ at $s=a$''
 and write $m(s)=m(s;f,a)$.
 It is clear that this is a generalization of the multiplicity (or order)
 of zeros or poles of meromorphic functions (they are the case $m(s)\in\bZ$).
 For example, if $f(s)$ has a zero (resp. a pole) of order $n$ at $s=a$,
 then the multiplicity polynomial of $f(s)^{q(s)}$ for a polynomial $q(s)$ at $s=a$ is given by
 $m(s;f^q,a)=nq(s)$ (resp. $-nq(s)$).

 From the expression \eqref{for:ZGD}, for $k\in\bN$,
 one can see that 
\begin{equation}
\label{for:mpforZr}
 m(s;Z_{\G,r},-k)=2(g-1)(2k+1)(s-k)^{r-1}(s-k-1)^{r-1}.
\end{equation}
 In particular, when $r=1$,
 this coincides with the multiplicity $2(g-1)(2k+1)$ 
 of the trivial zero $s=-k$ of the Selberg zeta function $Z_{\G}(s)$. 
 The equation \eqref{for:mpforZr} is obtained as follows;
 from the equation \eqref{for:ZGD} and the expression \eqref{for:gammafactor1}
 together with the fact that $\G_j(s)^{-1}$ has a zero at $s=-k$ of order $\binom{k+j-1}{j-1}$, 
 it can be written as 
\[
  Z_{\G,r}(s)
=\Biggl(\prod^{2r}_{j=1}(s+k)^{(g-1)\binom{k+j-1}{j-1}\a_{r,j}(t)}\Biggr)\cdot z_{\G,r}(s),
\]
 where $z_{\G,r}(s)$ is some holomorphic function at $s=-k$ with $z_{\G,r}(-k)\ne 0$.
 This shows that 
\begin{align*}
 m(s;Z_{\G,r},-k)
&=(g-1)\sum^{2r}_{j=1}\binom{k+j-1}{j-1}\a_{r,j}(t)\\
&=4(g-1)\sum^{r}_{l=1}\Biggl[\sum^{2\ell}_{j=1}c_{2\ell,j}\bigl(\frac{1}{2}\bigr)\binom{k+j-1}{j-1}\Biggr]
\binom{r-1}{\ell-1}(-1)^{\ell-1}t^{2r-2\ell}\\
&=2(g-1)(2k-1)t^{2r-2}\sum^{r-1}_{\ell=0}\binom{r-1}{\ell}\Bigl(-\bigl(k+\frac{1}{2}\bigr)^2t^{-2}\Bigr)^{\ell}\\
&=2(g-1)(2k-1)\Bigl(t^2-\bigl(k+\frac{1}{2}\bigr)^2\Bigr)^{r-1}.
\end{align*}
 Note that, in the third equality,
 we have employed the formula \eqref{for:genec}.  
\end{remark}


\subsection{Poly-Selberg zeta functions}

 In order to study an Euler product expression of the Milnor-Selberg zeta function $Z_{\G,r}(s)$,
 we now introduce a certain generalization of the Selberg zeta function.

 For $m\in\bN$,
 define the function $Z_{\G}^{(m)}(s)$ by the following Euler product.
\begin{equation}
\label{def:poly-Selberg}
 Z_{\G}^{(m)}(s)
:=\prod_{P\in\Prim(\G)}\prod^{\infty}_{n=0}
 H_m\bigl(N(P)^{-s-n}\bigr)^{(\log N(P))^{-m+1}},
\end{equation}
 where $H_{m}(z):=\exp(-Li_m(z))$ with 
 $Li_m(z):=\sum^{\infty}_{k=1}\frac{z^k}{k^m}$ being the polylogarithm of degree $m$.
 This infinite product converges absolutely for $\Re(s)>1$.
 We call $Z_{\G}^{(m)}(s)$ a {\it poly-Selberg zeta function} of degree $m$.
 Notice that, since $Li_1(z)=-\log{(1-z)}$ and hence $H_{1}(z)=1-z$,
 we have $Z_{\G}^{(1)}(s)=Z_{\G}(s)$.
 To give an analytic continuation of $Z_{\G}^{(m)}(s)$, 
 we first show the following

\begin{lem}
 It holds that 
\begin{align}
\label{for:logderiZm}
 -\log{Z^{(m)}_{\G}(s)}
=\sum_{\g\in\Hyp(\G)}
\frac{\log N(\d_{\g})}{N(\g)^{\frac{1}{2}}-N(\g)^{-\frac{1}{2}}}
\frac{N(\g)^{-s+\frac{1}{2}}}{(\log N(\g))^m}
\qquad (\Re(s)>1).
\end{align}
\end{lem}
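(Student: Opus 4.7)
The plan is to prove the identity by a direct manipulation: take the logarithm of the Euler product \eqref{def:poly-Selberg}, expand the polylogarithm as a power series, sum the resulting geometric series over $n$, and then reindex $\Hyp(\G)$ as pairs $(P,k)$ with $P\in\Prim(\G)$ and $k\ge 1$. The main thing to check is that the algebraic manipulations line up, and to verify absolute convergence so that the interchange of summations is legitimate for $\Re(s)>1$.

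First I would take $-\log$ of \eqref{def:poly-Selberg}. Since $H_m(z)=\exp(-\Li_m(z))$, this gives
\[
 -\log Z^{(m)}_{\G}(s)
= \sum_{P\in\Prim(\G)} (\log N(P))^{1-m} \sum_{n=0}^{\infty} \Li_m\bigl(N(P)^{-s-n}\bigr).
\]
Substituting $\Li_m(z)=\sum_{k\ge 1} z^k/k^m$ and exchanging the sum over $n$ with that over $k$ (valid for $\Re(s)>1$ by absolute convergence), the geometric series in $n$ collapses: $\sum_{n\ge 0} N(P)^{-k(s+n)} = N(P)^{-ks}/(1-N(P)^{-k})$. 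Thus
\[
 -\log Z^{(m)}_{\G}(s)
= \sum_{P\in\Prim(\G)} (\log N(P))^{1-m}\sum_{k=1}^{\infty} \frac{1}{k^m}\cdot\frac{N(P)^{-ks}}{1-N(P)^{-k}}.
\]

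Next I would reindex the right-hand side of the claim. Every $\g\in\Hyp(\G)$ is uniquely written as $\g=\d_\g^{k}$ with $\d_\g\in\Prim(\G)$ and $k\ge 1$, so $N(\g)=N(\d_\g)^k$ and $\log N(\d_\g)=\log N(\d_\g)$. Substituting this into the right-hand side of the lemma and using the elementary identity
\[
 \frac{N(\d_\g)^{-k(s-1/2)}}{N(\d_\g)^{k/2}-N(\d_\g)^{-k/2}}
= \frac{N(\d_\g)^{-ks}}{1-N(\d_\g)^{-k}},
\]
the right-hand side becomes exactly the expression obtained above for $-\log Z^{(m)}_{\G}(s)$, proving the identity.

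The only genuinely non-cosmetic step is the justification of absolute convergence in order to exchange the sums over $n,k,P$; this follows from the standard bound used for the ordinary Selberg zeta function: for $\Re(s)>1$, the iterated series $\sum_P\sum_{k,n}N(P)^{-k(s+n)}/k$ converges absolutely, and the weights $(\log N(P))^{1-m}/k^{m-1}$ only improve the estimate (or change it by a bounded factor, since $\log N(P)$ is bounded below by the length spectrum gap $\e_\G>0$). No further obstacle is anticipated.
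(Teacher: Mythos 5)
Your proposal is correct and follows essentially the same route as the paper: take $-\log$ of the Euler product, expand $\Li_m$, collapse the geometric series in $n$, and reindex via $\g=P^k$ using $N(P^k)=N(P)^k$ and $\log N(P^k)=k\log N(P)$. The algebraic identity you isolate is exactly the step the paper performs implicitly, and your convergence remark covers the only point the paper leaves unstated.
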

\begin{proof}
 Note that $N(P^k)=N(P)^k$ and hence $\log N(P^k)=k\log N(P)$
 for $P\in\Prim(\G)$ and $k\in\bN$.
 Hence, from the definition \eqref{def:poly-Selberg}, we have 
\begin{align*}
-\log{Z^{(m)}_{\G}(s)}
&=\sum_{P\in\Prim(\G)}\sum^{\infty}_{n=0}
\sum^{\infty}_{k=1}\frac{N(P)^{-(s+n)k}}{k^{m}(\log N(P))^{m-1}}\\
&=\sum_{P\in\Prim(\G)}\sum^{\infty}_{k=1}\frac{N(P)^{-ks}}
{k^m(\log{N(P)})^{m-1}}\frac{1}{1-N(P)^{-k}}\\
&=\sum_{P\in\Prim(\G)}\sum^{\infty}_{k=1}
 \frac{\log{N(P)}}{N(P^k)^{\frac{1}{2}}-N(P^k)^{-\frac{1}{2}}}\frac{N(P^{k})^{-s+\frac{1}{2}}}{(\log N(P^k))^{m}}.
\end{align*}
 Writing $P^{k}=\g$,
 one obtains the desired expression.
\end{proof}

 The poly-Selberg zeta function satisfies a differential ladder relation.
 As a consequence, when $m\ge 2$,
 one can obtain an iterated integral representation of $Z^{(m)}_{\G}(s)$ 
 which gives an analytic continuation beyond the line $\Re(s)=1$.

\begin{prop}
\label{IteratedSelberg}
 $(\mathrm{i})$\ It holds that  
\begin{align}
\label{for:partialzeta}
 \frac{d^{m-1}}{ds^{m-1}}\log Z_{\G}^{(m)}(s)
=(-1)^{m-1}\log Z_{\G}(s)
\qquad (\Re(s)>1).
\end{align}
 $(\mathrm{ii})$\ Fix $a\in\bC$ with $\Re(a)>1$. Then, for $m\ge 2$, we have  
\begin{align}
\label{for:IntRep}
 Z_{\G}^{(m)}(s)
=Q^{(m)}_{\G}(s,a)\exp\Biggl(
 \underbrace{\int^{s}_{a}\int^{\xi_{m-1}}_{a}\cdots\int^{\xi_2}_{a}}_{m-1}
 \log Z_{\G}(\xi_1)d\xi_1\cdots d\xi_{m-1}\Biggl)^{(-1)^{m-1}},
\end{align}
 where $Q^{(m)}_{\G}(s,a):=\prod^{m-2}_{k=0}Z_{\G}^{(m-k)}(a)^{\frac{(-1)^k}{k!}(s-a)^k}$.
 If $\frac{1}{4}\notin\Spec(\Delta_{\G})$,
 then this gives an analytic continuation of $Z_{\G}^{(m)}(s)$
 to the region $\Omega_{\G}\setminus (-\infty,-1]$,
 otherwise, to the region
 $\Omega^{+}_{\G}:=\Omega_{\G}\cap\{s\in\bC\,|\,\Re(s)>\frac{1}{2}\}$.
\end{prop}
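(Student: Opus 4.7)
The plan is to derive everything from the logarithmic sum \eqref{for:logderiZm} already established, by repeated differentiation (for part (i)) and then by repeated integration with Taylor-type remainder (for part (ii)).

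For part (i), I would start from \eqref{for:logderiZm} and differentiate term by term $m-1$ times with respect to $s$. Each differentiation produces a factor of $-\log N(\gamma)$, so after $m-1$ differentiations the summand acquires the factor $(-\log N(\gamma))^{m-1}/(\log N(\gamma))^m = (-1)^{m-1}/\log N(\gamma)$. The resulting series is precisely $(-1)^{m-1}(-\log Z_{\Gamma}(s))$ by the $m=1$ specialization of \eqref{for:logderiZm}. Termwise differentiation is justified for $\Re(s)>1$ since there the series converges absolutely and uniformly on compacta together with all its derivatives (dominated by a multiple of the convergent series for $Z_{\Gamma}$). This proves \eqref{for:partialzeta}.

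For part (ii), the same argument applied $k$ times (for $0\le k\le m-1$) shows the intermediate ladder
\[
 \frac{d^{k}}{ds^{k}}\log Z_{\Gamma}^{(m)}(s)=(-1)^{k}\log Z_{\Gamma}^{(m-k)}(s),
\]
valid in $\Re(s)>1$, which reads off the Taylor coefficients of $\log Z_{\Gamma}^{(m)}$ at $s=a$ in terms of $\log Z_{\Gamma}^{(m-k)}(a)$. Then the standard iterated Taylor identity with integral remainder
\[
 f(s)=\sum_{k=0}^{m-2}\frac{f^{(k)}(a)}{k!}(s-a)^{k}+\int_{a}^{s}\!\!\int_{a}^{\xi_{m-1}}\!\!\cdots\int_{a}^{\xi_{2}}f^{(m-1)}(\xi_{1})\,d\xi_{1}\cdots d\xi_{m-1}
\]
applied to $f=\log Z_{\Gamma}^{(m)}$ yields, after exponentiation, exactly \eqref{for:IntRep} with the prefactor $Q_{\Gamma}^{(m)}(s,a)$.

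The real work is the analytic continuation claim. The right-hand side of \eqref{for:IntRep} is well-defined on any simply connected region containing $a$ on which $Z_{\Gamma}(\xi_{1})$ is holomorphic and zero-free, because there $\log Z_{\Gamma}$ is a single-valued holomorphic integrand and the iterated integral is path-independent. So I would verify: (a) $Z_{\Gamma}(s)$ has no zeros inside $\Omega_{\Gamma}\setminus(-\infty,-1]$ — the non-trivial zeros $\alpha_{j}^{\pm}$ lie on the boundary (they are the endpoints of the removed rays in the definition \eqref{def:omega}), the trivial zeros at $s=-k$ for $k\ge 1$ are removed by excluding $(-\infty,-1]$, and the remaining trivial zeros at $s=0,1$ also sit on the boundary of $\Omega_{\Gamma}$ since $\alpha_{0}^{+}=0$, $\alpha_{0}^{-}=1$; (b) $\Omega_{\Gamma}\setminus(-\infty,-1]$ is simply connected when $\frac{1}{4}\notin\Spec(\Delta_{\Gamma})$ (it is the complement in $\bC$ of several disjoint rays to infinity), while when $\frac{1}{4}\in\Spec(\Delta_{\Gamma})$ the region $\Omega_{\Gamma}$ splits into two half-plane-like components and we restrict to $\Omega_{\Gamma}^{+}$, which is still simply connected and contains $a$. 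Once (a) and (b) are in place, the iterated integral in \eqref{for:IntRep} defines a holomorphic function on the asserted region, and it agrees with $Z_{\Gamma}^{(m)}(s)$ on the set $\Re(s)>1$ by construction; so it furnishes the desired analytic continuation.

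The main obstacle I expect is the geometric verification in step (b) — in particular, bookkeeping carefully which endpoints $\alpha_{j}^{\pm}$ coincide (and thus might close off loops around zeros) in the exceptional case $\frac{1}{4}\in\Spec(\Delta_{\Gamma})$, which is why one must pass from $\Omega_{\Gamma}$ to the one-sided region $\Omega_{\Gamma}^{+}$ to keep the domain simply connected and free of zeros of $Z_{\Gamma}$.
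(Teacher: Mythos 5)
Your proof is correct and follows essentially the same route as the paper: part (i) comes from termwise differentiation of the series \eqref{for:logderiZm} (the paper phrases this as the one-step ladder $\frac{d}{ds}\log Z^{(m)}_{\G}(s)=-\log Z^{(m-1)}_{\G}(s)$ iterated), and part (ii) is the paper's induction repackaged as Taylor's formula with iterated-integral remainder, with the continuation obtained by moving $s$ through the region where $\log Z_{\G}$ is single-valued and holomorphic. Your explicit verification that the zeros of $Z_{\G}$ at $s=0,1,\a_j^{\pm}$ lie on the boundary of $\Omega_{\G}$ and that the relevant regions are simply connected is a slightly more careful account of the step the paper dispatches in one sentence.
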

\begin{proof}
 We first show the equation \eqref{for:partialzeta}.
 The case $m=1$ is clear because $Z_{\G}^{(1)}(s)=Z_{\G}(s)$.
 Let $m\ge 2$. From \eqref{for:logderiZm}
 (or the differential equation $\frac{d}{dz}Li_m(z)=\frac{1}{z}Li_{m-1}(z)$),
 we have  
\begin{equation}
\label{for:deri}
 \frac{d}{ds}\log{Z^{(m)}_{\G}(s)}=-\log{Z^{(m-1)}_{\G}(s)}.
\end{equation}  
 Using this equation repeatedly, 
 one obtains the equation \eqref{for:partialzeta}.
 We next show $(\mathrm{ii})$ by induction on $m$.
 Assume that $\frac{1}{4}\notin\Spec(\Delta_{\G})$.
 Let $m=2$.
 Then, integrating the equation \eqref{for:partialzeta} with $m=2$,
 we have 
\begin{align}
\label{for:m=2}
 \log Z_{\G}^{(2)}(s)
=\log{Z^{(2)}_{\G}(a)}-\int^{s}_{a}\log Z_{\G}(\xi)d\xi.
\end{align}
 Here, we take the path in $\Re(s)>1$.
 This immediately shows the equation \eqref{for:IntRep} with $m=2$ for $\Re(s)>1$.
 Here, in the righthand-side of \eqref{for:m=2},
 one can move $s$ freely in the region
 in where $\log{Z_{\G}(s)}$ is single-valued and holomorphic, that is,
 $\Omega_{\G}\setminus(-\infty,-1]$
 (notice that the Selberg zeta function $Z_{\G}(s)$ has zeros at $s=1,0,-k$ for $k\in\bN$ and
 $s=\a^{\pm}_j$ for $j\in\bN$).
 Hence the equation \eqref{for:IntRep} gives
 an analytic continuation of $Z_{\G}^{(2)}(s)$ to $\Omega_{\G}\setminus(-\infty,-1]$.
 Now, assume that the claim $(\mathrm{ii})$ holds for $m-1$. 
 Then, we have 
\begin{align}
\label{for:m-1}
 \log Z_{\G}^{(m-1)}(\xi)
&=\sum^{m-1}_{k=0}\frac{(-1)^k}{k!}(\xi-a)^k\log{Z^{(m-1-k)}_{\G}(a)}\\
&\ \ \ +(-1)^{m-2}\underbrace{\int^{\xi}_{a}\int^{\xi_{m-2}}_{a}
\cdots\int^{\xi_2}_{a}}_{m-2} \log Z_{\G}(\xi_1)d\xi_1\cdots d\xi_{m-2}.\nonumber
\end{align}
 Taking the integral of the equation \eqref{for:m-1}
 together with the formula \eqref{for:deri},
 we have 
\begin{align*}
\log Z_{\G}^{(m)}(s)
&=\log{Z^{(m)}_{\G}(a)}+\sum^{m-3}_{k=0}\frac{(-1)^{k+1}}{(k+1)!}(s-a)^{k+1}\log{Z^{(m-(k+1))}_{\G}(a)}\\
&\ \ \ +(-1)^{m-2}\int^{s}_{a}\underbrace{\int^{\xi}_{a}\int^{\xi_{m-2}}_{a}
\cdots\int^{\xi_1}_{a}}_{m-2} \log Z_{\G}(\xi_1)d\xi_1\cdots d\xi_{m-2}d\xi.
\end{align*}
 Therefore, by the same discussion as above,
 one shows the claim $(\mathrm{ii})$ for all $m\ge 2$.
 The proof works similarly for the case $\frac{1}{4}\in\Spec(\Delta_{\G})$
 (notice that $\Omega^{+}_{\G}$ is the connected component of $\Omega_{\G}$
 which includes the region $\{s\in\bC\,|\,\Re(s)>1\}$).
\end{proof}


\begin{remark}
 The discussion above can be applied to a general scheme for arbitrary number fields.
 In fact, let $L(s,\pi)$ be a $L$-functions attached to
 irreducible cuspidal automorphic representation $\pi$ of $\GL_d(\bA_{K})$,
 where $K$ is an algebraic number field and $\bA_K$ is the adele ring of $K$. 
 We have shown in \cite{WakayamaYamasaki} that,
 as a generalization of the result in \cite{Deninger1992},
 a ``higher depth regularized products'' of the
 non-trivial zeros of $L(s,\pi)$   
 can be evaluated as a product of the Milnor gamma functions and 
 ``poly $L$-functions'',
 which are similarly defined by an Euler product
 as the poly-Selberg zeta function. 
\end{remark}

\subsection{Euler product expressions of $Z_{\G,r}(s)$}

 The following formula can be regarded as an Euler product expression of $Z_{\G,r}(s)$
 (remark that the poly-Selberg zeta function $Z^{(m)}_{\G}(s)$
 is defined by the Euler product \eqref{def:poly-Selberg}).

\begin{thm}
\label{thm:zeta factor}
 It holds that   
\begin{equation}
\label{for:zetafunction}
 Z_{\G,r}(s)
=\Biggl(
\prod^{r-1}_{m=0}Z^{(r+m)}_{\G}(s)^{\frac{(r-1+m)!}{m!(r-1-m)!}(2s-1)^{r-1-m}}
\Biggr)^{(r-1)!(-1)^{r-1}}.
\end{equation}
\end{thm}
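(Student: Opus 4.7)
The plan is to compute $\partial_w\Theta_{\G,r}(w,t)|_{w=0}$ directly from its defining Mellin-type integral and identify the result with the logarithm of the right-hand side of \eqref{for:zetafunction}. The key local fact is that $\G(s)$ has a simple pole at $s=1-r$ with residue $(-1)^{r-1}/(r-1)!$, so
\[
\frac{1}{\G(w+1-r)} = (-1)^{r-1}(r-1)!\,w + O(w^2)\qquad (w\to 0).
\]
Setting $G(w) := \int_0^\infty \xi^{w-r}\theta_\G(\xi,t)\,d\xi$, which is holomorphic at $w=0$ by the rapid decay of $\theta_\G(\xi,t)$ both as $\xi\downarrow 0$ (Gaussian-in-$1/\xi$ suppression from the factor $e^{-(\log N(\g))^2/(4\xi)}$) and as $\xi\uparrow\infty$ (from $e^{-t^2\xi}$ when $\Re(s)>1$), one immediately obtains
\[
\frac{\partial}{\partial w}\Theta_{\G,r}(w,t)\Big|_{w=0} = (-1)^{r-1}(r-1)!\,G(0).
\]

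Next, I would substitute the explicit form of $\theta_\G(\xi,t)$ and exchange the sum over $\g\in\Hyp(\G)$ with the integral, which is legitimate under absolute convergence for $\Re(s)>1$. This reduces matters to evaluating, with $a=\log N(\g)$ and $t=s-\tfrac12$,
\[
I_r(t,a) := \frac{1}{2\sqrt{\pi}}\int_0^\infty \xi^{-r-1/2}e^{-t^2\xi-a^2/(4\xi)}\,d\xi.
\]
This is a standard Bessel-type integral representing a constant multiple of $K_{r-1/2}(ta)$. Using the classical finite closed form at half-integer index,
\[
K_{r-1/2}(x)=\sqrt{\tfrac{\pi}{2x}}\,e^{-x}\sum_{k=0}^{r-1}\frac{(r-1+k)!}{k!(r-1-k)!(2x)^k},
\]
and simplifying, one arrives at
\[
I_r(t,a) = e^{-ta}\sum_{k=0}^{r-1}\frac{(r-1+k)!}{k!(r-1-k)!}\cdot\frac{(2t)^{r-1-k}}{a^{r+k}}.
\]

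Substituting back into $G(0)$ with $a=\log N(\g)$, $e^{-ta}=N(\g)^{-(s-1/2)}$, and $2t=2s-1$, then interchanging the finite $k$-sum with the sum over $\g$, one finds
\[
G(0) = -\sum_{k=0}^{r-1}\frac{(r-1+k)!}{k!(r-1-k)!}(2s-1)^{r-1-k}\log Z^{(r+k)}_{\G}(s),
\]
where each inner $\g$-series has been identified with $-\log Z^{(r+k)}_\G(s)$ via \eqref{for:logderiZm}. Multiplying by the prefactor $(-1)^{r-1}(r-1)!$ and exponentiating $-\partial_w\Theta_{\G,r}(w,t)|_{w=0}$ recovers precisely the product formula \eqref{for:zetafunction} on the half-plane $\Re(s)>1$, after which it extends to $\Omega_\G\setminus(-\infty,-1]$ by analytic continuation, since both sides are holomorphic there by Theorem~\ref{thm:analyticpropZ} and Proposition~\ref{IteratedSelberg}.

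The main obstacle is not conceptual but computational: the half-integer Bessel evaluation must be carried out carefully so that the coefficient pattern $\tfrac{(r-1+k)!}{k!(r-1-k)!}(2s-1)^{r-1-k}$ emerges in exactly the asserted form, and one has to keep track of the sign $(-1)^{r-1}$ contributed by the Laurent expansion of $1/\G(w+1-r)$. The other ingredients — absolute convergence for $\Re(s)>1$, the Fubini-type interchange of sum and integral, and the final analytic continuation — are routine once the region of convergence is correctly identified.
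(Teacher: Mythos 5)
Your proposal is correct and follows essentially the same route as the paper's own proof: both exploit the first-order vanishing of $1/\G(w+1-r)$ at $w=0$, reduce the Mellin integral of $\theta_{\G}(\xi,t)$ to the modified Bessel function $K_{r-\frac12}$, invoke its finite closed form at half-integer index, and identify the resulting $\g$-sums with $\log Z^{(r+k)}_{\G}(s)$ via \eqref{for:logderiZm}. The only (cosmetic) difference is that you evaluate the integral directly at $w=0$ rather than first expressing $\Theta_{\G,r}(w,t)$ through $K_{w+\frac12-r}$ and then expanding in $w$, and your verified formula for $I_r(t,a)$ matches what the paper's expansion yields.
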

\begin{proof}
 We start from the definition \eqref{def:zeta}. 
 Let $s\in U^{+}:=U\cap\{s\in\bC\,|\,\Re(s)>\frac{1}{2}\}$.
 Then, using the formula
\[
 \int^{\infty}_{0}{\xi}^we^{-(a\xi+\frac{b}{\xi})}\frac{d\xi}{\xi}
=2\Bigl(\frac{b}{a}\Bigr)^{\frac{w}{2}}K_{w}(2a^{\frac{1}{2}}b^{\frac{1}{2}})
\qquad (\Re(a)>0,\ \Re(b)>0),
\]
 where $K_{\n}(x)$ is the modified Bessel function of the second kind, we have 
\begin{align*}
 \Theta_{\G,r}(w,t)
&=\frac{1}{\sqrt{\pi}\G(w+1-r)}\\
&\ \ \ \times
\sum_{\g\in\Hyp(\G)}
\frac{\log N(\d_\g)}{N(\g)^{\frac{1}{2}}-N(\g)^{-\frac{1}{2}}}
\Bigl(\frac{\log{N({\g})}}{2t}\Bigr)^{w+\frac{1}{2}-r}K_{w+\frac{1}{2}-r}\bigl(t\log{N(\g)}\bigr).
\end{align*} 
 Here, employing the asymptotic formulas
\begin{align*}
\frac{1}{\G(w+1-r)}
&=(-1)^{r-1}(r-1)!w+O(w^2),\\
 \Bigl(\frac{\log{N(\g)}}{2t}\Bigr)^{w+\frac{1}{2}-r}
&=\Bigl(\frac{\log{N(\g)}}{2t}\Bigr)^{\frac{1}{2}-r}+O(w)
\end{align*}
 and
\[
 K_{w+\frac{1}{2}-r}\bigl(t\log{N(\g)}\bigr)
=K_{\frac{1}{2}-r}\bigl(t\log{N(\g)}\bigr)+O(w)
\]
 as $w\to 0$ together with the equation (see, e.g., \cite{EMOT})
\begin{align*}
 K_{\frac{1}{2}-r}(y)=K_{r-\frac{1}{2}}(y)
&=\Bigl(\frac{\pi}{2y}\Bigr)^{\frac{1}{2}}e^{-y}
 \sum^{r-1}_{m=0}(2y)^{-m}\frac{(r-1+m)!}{m!(r-1-m)!},
\end{align*}
 one can see from \eqref{for:logderiZm} that
\begin{align}
\label{for:deriI}
 \log{Z_{\G,r}(s)}
&=-\frac{\p}{\p w}\Theta_{\G,r}(w,t)\Bigl|_{w=0}\\
&=(-1)^{r-1}(r-1)!
\sum_{m=0}^{r-1}\frac{(r-1+m)!}{m!(r-1-m)!}(2t)^{r-1-m}
\log{Z^{(r+m)}_{\G}(s)}.\nonumber
\end{align}
 This shows the expression \eqref{for:zetafunction}.
\end{proof}

\begin{example}
 Let $t=s-\frac{1}{2}$.
 Then, we have 
\begin{align*}
 Z_{\G,1}(s)
&=Z^{(1)}_{\G}(s)=Z_{\G}(s),\\
 Z_{\G,2}(s)
&=Z^{(2)}_{\G}(s)^{-(2t)}Z^{(3)}_{\G}(s)^{-2},\\
 Z_{\G,3}(s)
&=Z^{(3)}_{\G}(s)^{2(2t)^2}Z^{(4)}_{\G}(s)^{12(2t)}Z^{(5)}_{\G}(s)^{24}.
\end{align*}
\end{example}

\begin{remark}
 We remark that, when $r\ge 2$ and $\frac{1}{4}\notin\Spec(\Delta_{\G})$,
 from Proposition~\ref{IteratedSelberg},
 the expression \eqref{for:zetafunction} also gives an analytic continuation
 of $Z_{\G,r}(s)$ to the region
 $\Omega_{\G}\setminus (-\infty,-1]$.
 (We have already known this fact from Theorem~\ref{thm:analyticpropZ}.
 See Figure~$3$ in Section~\ref{sec:introduction}.)
\end{remark}

 Finally, as is the case of $Z^{(m)}_{\G}(s)$,
 we show that the Milnor-Selberg zeta function $Z_{\G,r}(s)$
 also satisfies a differential ladder relation and hence has an iterated integral representation,
 which again gives an analytic continuation to
 $\Omega_{\G}\setminus (-\infty,-1]$ if $\frac{1}{4}\notin\Spec(\Delta_{\G})$. 

\begin{cor}
\label{cor:iteMilnor}
 $(\mathrm{i})$\ It holds that 
\begin{equation}
\label{for:logderiZr}
 \Bigl(\frac{1}{2s-1}\frac{d}{ds}\Bigr)^{r-1}\log{Z_{\G,r}(s)}
=(r-1)!\log{Z_{\G}(s)}
\qquad (s\in U^{+}).
\end{equation} 
 $(\mathrm{ii})$\ Fix $a\in U^{+}$. Then, for $r\ge 2$, we have  
\begin{align}
\label{for:IntRepZr}
 Z_{\G,r}(s)
=Q_{\G,r}(s,a)\exp\Biggl(
 \underbrace{\int^{s}_{a}\int^{\xi_{r-1}}_{a}\cdots\int^{\xi_2}_{a}}_{r-1}
 \Bigl(\prod^{r-1}_{j=1}(2\xi_j-1)\Bigr)\log Z_{\G}(\xi_1)d\xi_1 \cdots d\xi_{r-1}\Biggl)^{(r-1)!},
\end{align}
 where $Q_{\G,r}(s,a):=\prod^{r-2}_{k=0}Z_{\G,r-k}(a)^{\binom{r-1}{k}(s-a)^k(s+a-1)^k}$.
 If $\frac{1}{4}\notin\Spec(\Delta_{\G})$,
 then this gives an analytic continuation of $Z_{\G,r}(s)$ 
 to the region $\Omega_{\G}\setminus (-\infty,-1]$,
 otherwise, to the region $\Omega^{+}_{\G}$.
\end{cor}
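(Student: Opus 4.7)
The plan is to establish first the one-step differential ladder relation
\begin{equation*}
\frac{1}{2s-1}\frac{d}{ds}\log Z_{\G,r}(s) = (r-1)\log Z_{\G,r-1}(s) \qquad (r\ge 2),
\end{equation*}
from which (i) follows by iterating $r-1$ times (the base case $Z_{\G,1}=Z_{\G}$ being trivial), and (ii) follows by integrating this recurrence from $a$ to $s$ and substituting successively for $\log Z_{\G,r-1},\log Z_{\G,r-2},\ldots,\log Z_{\G,2}$.

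For the ladder relation I would return to the integral representation $\log Z_{\G,r}(s) = -\frac{\p}{\p w}\Theta_{\G,r}(w,t)|_{w=0}$ introduced in \eqref{def:zeta}, with $t=s-\tfrac{1}{2}$, and exploit $\frac{\p}{\p t}e^{-t^2\xi}=-2t\xi\,e^{-t^2\xi}$. Differentiating $\theta_{\G}(\xi,t)$ under the integral and absorbing the extra $\xi$ into the measure, followed by the identity $\G(w+2-r)/\G(w+1-r)=w+1-r$, gives
\begin{equation*}
\frac{1}{2t}\frac{\p}{\p t}\Theta_{\G,r}(w,t) \;=\; -(w+1-r)\,\Theta_{\G,r-1}(w,t).
\end{equation*}
Now take $-\frac{\p}{\p w}$ at $w=0$. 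For $r\ge 2$ the crucial observation is that $\Theta_{\G,r-1}(0,t)=0$: the integral $\int_0^{\infty}\xi^{2-r}\theta_{\G}(\xi,t)\,\frac{d\xi}{\xi}$ is entire in $w$ (since for $s\in U$ the function $\theta_{\G}(\xi,t)$ decays exponentially as both $\xi\to 0^{+}$ and $\xi\to\infty$), while $1/\G(2-r)=0$ at the non-positive integer $2-r$. The surviving term is $(r-1)\log Z_{\G,r-1}(s)$, proving the ladder relation. Iteration then gives (i).

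For (ii), writing the ladder relation as $\frac{d}{ds}\log Z_{\G,r}(s)=(r-1)(2s-1)\log Z_{\G,r-1}(s)$, integrating from $a$ to $s$ and substituting the analogous formula for $\log Z_{\G,r-1}$ produces, after $k$ iterations, a boundary term of the form $\frac{(r-1)!}{(r-1-k)!}\log Z_{\G,r-k}(a)\cdot J_k(s,a)$, where
\begin{equation*}
J_k(s,a):=\underbrace{\int_a^s\int_a^{\xi_k}\cdots\int_a^{\xi_2}}_{k}\prod_{j=1}^{k}(2\xi_j-1)\,d\xi_1\cdots d\xi_k.
\end{equation*}
Since $\frac{d}{d\xi}v(\xi)^k=k\,v(\xi)^{k-1}(2\xi-1)$ for $v(\xi)=(\xi-a)(\xi+a-1)=\xi(\xi-1)-a(a-1)$, a one-line induction gives $J_k(s,a)=v(s)^k/k!$, so the boundary term equals $\binom{r-1}{k}v(s)^k\log Z_{\G,r-k}(a)$. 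Summing for $0\le k\le r-2$ and exponentiating yields the prefactor $Q_{\G,r}(s,a)$; the final nested integral involves $\log Z_{\G,1}=\log Z_{\G}$ and carries the coefficient $(r-1)!$, giving \eqref{for:IntRepZr}.

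For the analytic continuation, one proceeds exactly as in Proposition~\ref{IteratedSelberg}: $\log Z_{\G}(\xi_1)$ is single-valued and holomorphic on $\Omega_{\G}\setminus(-\infty,-1]$ (respectively on $\Omega^{+}_{\G}$ when $\tfrac{1}{4}\in\Spec(\Delta_{\G})$), so the iterated integral is well-defined and holomorphic there, and agrees with $Z_{\G,r}(s)$ on $U^{+}$ by construction. The main technical point is the vanishing $\Theta_{\G,r-1}(0,t)=0$; once this is in place, everything else reduces to elementary integration and the well-chosen variable $v(s)=s(s-1)-a(a-1)$, on which $\frac{1}{2s-1}\frac{d}{ds}$ acts as $\frac{d}{dv}$.
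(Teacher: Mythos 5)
Your proof is correct, and it reaches the key ladder relation $\frac{1}{2s-1}\frac{d}{ds}\log Z_{\G,r}(s)=(r-1)\log Z_{\G,r-1}(s)$ by a genuinely different route. The paper obtains this relation as a one-line consequence of the Euler product expression \eqref{for:deriI} (Theorem~\ref{thm:zeta factor}) combined with the poly-Selberg derivative formula \eqref{for:deri}, $\frac{d}{ds}\log Z^{(m)}_{\G}(s)=-\log Z^{(m-1)}_{\G}(s)$; you instead go back to the definition \eqref{def:zeta} and differentiate $\Theta_{\G,r}(w,t)$ under the integral sign, using $\frac{\p}{\p t}e^{-t^2\xi}=-2t\xi e^{-t^2\xi}$ and $\G(w+2-r)=(w+1-r)\G(w+1-r)$ to get $\frac{1}{2t}\frac{\p}{\p t}\Theta_{\G,r}(w,t)=-(w+1-r)\Theta_{\G,r-1}(w,t)$, and then correctly identify the reason the boundary term dies: $\Theta_{\G,r-1}(0,t)=0$ for $r\ge 2$ because the integral is entire in $w$ (as the paper notes, using $\Re(t^2)>\frac14$ and $\log N(\g)>\e_\G$) while $1/\G(2-r)$ vanishes at the non-positive integer $2-r$. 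Your route buys logical independence from Theorem~\ref{thm:zeta factor} and its Bessel-function computation, so the differential ladder and the iterated integral representation stand on their own; the paper's route is shorter once \eqref{for:deriI} is available. For part (ii) you also supply the details the paper leaves implicit, and your observation that $v(\xi)=(\xi-a)(\xi+a-1)$ satisfies $v(a)=0$, $v'(\xi)=2\xi-1$, whence $J_k(s,a)=v(s)^k/k!$ and the boundary terms assemble into $Q_{\G,r}(s,a)$ with exponents $\binom{r-1}{k}(s-a)^k(s+a-1)^k$, is exactly the computation needed to verify the stated form of $Q_{\G,r}$.
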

\begin{proof}
 To prove the equation \eqref{for:logderiZr},
 it is sufficient to show for $r\ge 2$ that 
\begin{equation}
\label{for:ladderZr}
 \Bigl(\frac{1}{2s-1}\frac{d}{ds}\Bigr)\log{Z_{\G,r}(s)}
=(r-1)\log{Z_{\G,r-1}(s)}.
\end{equation}
 Actually, from the expression \eqref{for:deriI} together with \eqref{for:deri}, 
 one sees that 
\begin{align*}
\frac{d}{ds}\log{Z_{\G,r}(s)}
&=(-1)^{r}(r-1)!\sum^{r-2}_{m=0}\frac{(r+m-2)!}{m!(r-m-2)!}(2s-1)^{r-m-1}
\log{Z^{(m+r-1)}_{\G}(s)}\\
&=(2s-1)(r-1)\log{Z_{\G,r-1}(s)}.
\end{align*} 
 This shows the equation \eqref{for:ladderZr}.
 The iterated integral representation \eqref{for:IntRepZr}
 and the analytic continuation can be easily derived 
 by the similar discussion performed in Proposition~\ref{IteratedSelberg}.
\end{proof}

\begin{Acknowledgement}
 The authors would like to thank the referee and Hirotaka Akatsuka for their careful
 reading of the manuscript and helpful advice.
\end{Acknowledgement}



\bigskip

\noindent
\textsc{Nobushige KUROKAWA}\\
 Department of Mathematics, Tokyo Institute of Technology,\\
 Oh-okayama Meguro, Tokyo, 152-0033 JAPAN.\\
\texttt{kurokawa@math.titech.ac.jp}\\

\noindent
\textsc{Masato WAKAYAMA}\\
 Faculty of Mathematics, Kyushu University,\\
 Motooka, Nishiku, Fukuoka, 819-0395, JAPAN.\\
\texttt{wakayama@math.kyushu-u.ac.jp}\\

\noindent
\textsc{Yoshinori YAMASAKI}\\
 Graduate School of Science and Engineering, Ehime University,\\
 Bunkyo-cho, Matsuyama, 790-8577 JAPAN.\\
 \texttt{yamasaki@math.sci.ehime-u.ac.jp}

\end{document}